\documentclass[11pt]{article}
\usepackage{geometry}
\usepackage{amsfonts,amsmath,amsthm,amssymb}
\usepackage[dvipsnames]{xcolor}
\usepackage[hyperfootnotes=false]{hyperref}
\usepackage{array,float}
\usepackage{graphicx}
\usepackage{caption}
\usepackage{enumitem}
\usepackage{mathtools}
\usepackage{tikz-cd}
\usepackage[noadjust]{cite}
\usepackage{graphicx}
\usepackage[normalem]{ulem}
\usepackage{cleveref}
\usepackage{comment,verbatim}

\setlist{itemsep=0pt,topsep=5pt}

\theoremstyle{plain}
\newtheorem{theorem}{Theorem}[section]
\newtheorem{lemma}[theorem]{Lemma}
\newtheorem{corollary}[theorem]{Corollary}
\newtheorem{proposition}[theorem]{Proposition}
\newtheorem{conjecture}[theorem]{Conjecture}

\theoremstyle{remark}
\newtheorem{remark}[theorem]{Remark}

\theoremstyle{definition}
\newtheorem{definition}[theorem]{Definition}

\numberwithin{equation}{section}
\numberwithin{table}{section}
\numberwithin{figure}{section}

\newcommand{\R}{\mathbb{R}}
\newcommand{\C}{\mathbb{C}}
\newcommand{\Z}{\mathbb{Z}}
\newcommand{\Q}{\mathbb{Q}}
\DeclareMathOperator{\Gal}{\mathrm{Gal}}
\DeclareMathOperator{\gl}{\mathrm{GL}}
\DeclareMathOperator{\Log}{\mathrm{Log}}
\DeclareMathOperator{\rk}{\mathrm{rk}}
\DeclareMathOperator{\CM}{\mathrm{CM}}
\DeclareMathOperator{\non-CM}{\mathrm{non-CM}}
\DeclareMathOperator{\rel}{\mathrm{rel}}
\DeclareMathOperator{\N}{\mathrm{N}}
\DeclareMathOperator{\Tr}{\mathrm{Tr}}

\newcommand\nonumberfootnote[1]{
    \begingroup
    \renewcommand\thefootnote{}\footnote{#1}
    \addtocounter{footnote}{-1}
    \endgroup
}

\title{On the Geometry and Shapes of Rank 2 Log Unit Lattices}

\author{
  Cruz, Jose\\
  \texttt{jose.cruz@ucalgary.ca}
  \and
  Holmes, Erik\\
  \texttt{erikholm@iu.edu}
  \and 
  Jalalvand, Fatemeh \\
  \texttt{fatemeh.jalalvand@ucalgary.ca}
  \and
  Nunez Lon-Wo, Enrique\\
  \texttt{enrique.nunezlon.wo@mail.utoronto.ca}
  \and
  Scheidler, Renate\\
  \texttt{rscheidl@ucalgary.ca}
  \and
  Tran, Ha T. N.\\
  \texttt{htran2@ualberta.ca}
}

\begin{document}

\maketitle

\begin{abstract}
Every number field canonically gives rise to two lattices: its ring of integers and its log unit lattice. While the shapes of the former have undergone extensive research, far less is known about the shapes of the latter, referred to as \emph{unit shapes}. This paper presents an in-depth analysis of the unit shapes of number fields with unit rank~2. Our first main result characterizes, in many cases, the location of unit shapes within the fundamental domain of the space of rank~2 lattice shapes in terms of the Galois group of the field's Galois closure, and determines when these unit shapes are transcendental. Next, we establish that the unit shape uniquely determines the field up to isomorphism for totally imaginary $D_6$ non-CM sextic fields; this result fails in the CM case. Finally, for certain subfamilies of $D_6$ non-CM imaginary sextics, we offer a simple sufficient condition for their log unit lattices to be orthogonal and provide lower bounds on the proportion of fields with orthogonal log unit lattice.

 \nonumberfootnote{\emph{Keywords:} Rank 2 lattice, log unit lattice, fundamental units, unit shape, $D_6$ sextic field.}
\nonumberfootnote{\emph{2020 Mathematics Subject Classification:} 11H06, 11P21, 11R27, 11R21, 11Y40, 11R32.}
\end{abstract}

\tableofcontents


\section{Introduction}
Lattices occupy a central place in modern mathematics: from Minkowski's geometry of numbers, to the sphere packing problem in Euclidean space, to arithmetic statistics. More recently, in the wake of post-quantum cryptography, lattice-based cryptosystems have brought lattices to the forefront of public-key cryptography, drawing attention to these geometric structures well beyond pure mathematics. Much of this attention has been directed at lattices attached to \emph{additive} structures --- integral lattices of number fields, ideal lattices, root lattices --- while comparatively little is known about their multiplicative counterparts. There has, however, been a recent surge of work on the multiplicative lattices arising from number fields, the \emph{log unit lattices}, obtained by applying the logarithmic Minkowski embedding to the unit group modulo torsion. These lattices, and in particular their geometry, are the focus of our work.

Several threads motivate an in-depth investigation of log unit lattices. First, computing the class number, the unit group, or the log unit lattice of a number field is among the central problems of computational number theory, and these computations are hard: the fastest classical algorithms run in subexponential time. Partial knowledge of the geometry of the log unit lattice can yield speedups, as Schoof and Washington demonstrated for quintic fields \cite{schoof1988quintic}. Second, the size function of a number field is an arithmetic analogue of the dimension of the Riemann--Roch space of a divisor on an algebraic curve \cite{geer-schoof}, and understanding the geometry of the log unit lattice is crucial for proving the Schoof--van der Geer conjecture on the maximum of this size function \cite{TranTian2018SizeFunction, tran2023size}. Third, several cryptographic schemes rely on the hardness of the Short Generator Principal Ideal Problem, often in cyclotomic fields \cite{CGS14}, which becomes solvable in polynomial time once the underlying log unit lattice is understood \cite{CDPR16, HWB17}.

A recurring question in algebraic number theory asks whether a chosen invariant determines a number field up to isomorphism. On the additive side, the discriminant is a complete invariant for quadratic fields but loses this power in higher degree. The lattice arising from the ring of integers encodes more information than the discriminant, and its shape is another natural invariant to consider. In certain cases it is shown that the shape is a complete invariant in several natural families of degree exceeding~2. Yet it too can fail within a family; for example, every Galois cubic field has the same hexagonal shape \cite{terr1997}. On the multiplicative side, one is led to the analogous questions for the unit group, where the regulator (the covolume of the log unit lattice) plays the role of the discriminant, and the lattice itself remembers strictly more than its covolume. The CM fields make a natural test case: complex conjugation collapses their unit rank to that of the totally real subfield, so their log unit lattice is essentially inherited from this subfield. Here, one might expect the \emph{unit shape}, i.e.\ the shape of the log unit lattice, to be too rigid to tell fields apart. One of the problems investigated herein is to determine just how strong an invariant the unit shape really is, both for CM fields and beyond.

The arithmetic question of the strength of the unit shape as an invariant is complemented by a geometric question which accounts for the bulk of our results. Viewed as a point of the moduli space of all shapes of rank $n$ lattices, where does the unit shape lie? Is its location restricted to particular subspaces, and does it lie in the interior of the space or on its boundary? In rank two, one can ask whether the lattice is \emph{orthogonal} (i.e.\ has an orthogonal $\mathbb{Z}$-basis) or \emph{well-rounded} (i.e.\ has a $\mathbb{Z}$-basis consisting of shortest vectors) --- the distinguished forms that mark out special positions in the moduli space. As a complex number, is the point algebraic or transcendental, and does a whole family of fields fill out a region of the space or trace only a thin, explicitly describable locus?

It is natural to pursue these questions within families of fixed signature, often also fixing the Galois group~$G$ of the Galois closure. Integral lattices of of degree $n$ fields give rise to lattices of rank $n-1$ as one projects away from the common subring~$\Z$. For generic fields (those with $G \cong S_n$), the corresponding shapes are conjectured to equidistribute in the space $\mathcal{S}_{n-1}$ of shapes of rank $n-1$ lattices. In contrast, non-generic families are typically confined to explicit subvarieties of $\mathcal{S}_{n-1}$. 
Analogous distribution phenomena appear for log unit lattices \cite{HHV, RNT}. The smallest non-trivial setting is the case of rank~$2$, where $\mathcal{S}_2$ is the familiar modular domain and each shape is a single point of the upper half-plane. This is the arena of the present paper. We give a detailed account of rank~$2$ unit shapes --- where they lie within $\mathcal{S}_2$ and, in the central case of the totally imaginary sextic fields with Galois group $D_6$ (the dihedral group of order~$12$), how completely the shape determines the field.

\subsection{Our main results}

Our work utilizes tools from algebraic number theory, representation theory, lattice theory, and algebraic geometry. The results fall into three parts: a classification of rank~$2$ unit shapes by Galois group; an investigation into the geometry of unit lattices the totally imaginary $D_6$ family, where both the power of the unit shape as an invariant and the geometry of the resulting shape set are governed by a single dichotomy; and finally explicit orthogonality and density results for a family of pure sextic fields.

\subsubsection{A classification of rank \texorpdfstring{$2$}{2} unit shapes.} \label{ss:unit-rank-classification}

Adopting the notation from \cite{RNT}, let $\Omega(G, r, s)$ denote the family of unit shapes of fields with signature $(r,s)$ whose Galois closure has Galois group $G$, and $\Omega_{\CM}(G, r, s)$ (resp.\ $\Omega_{\non-CM}(G, r, s)$) the subfamily of shapes of CM fields (resp.\ non-CM fields) in $\Omega(G, r, s)$.
Our first main result, \Cref{theorem: classification or rank 2}, locates the unit shapes of unit rank~$2$ number fields within~$\mathcal{S}_2$ and determines their transcendentality according to the Galois group $G$. For example, the set $\Omega(D_6,0,3)$ is contained in $\mathcal{S}_2$. \Cref{fig: D6_sextic_plot} shows a plot of some members of $\Omega(D_6,0,3)$ inside the standard fundamental domain $\mathcal{D}$ (see \Cref{ss:lattices}).

\begin{figure}[ht]
    \centering{
    \includegraphics[scale=.35]{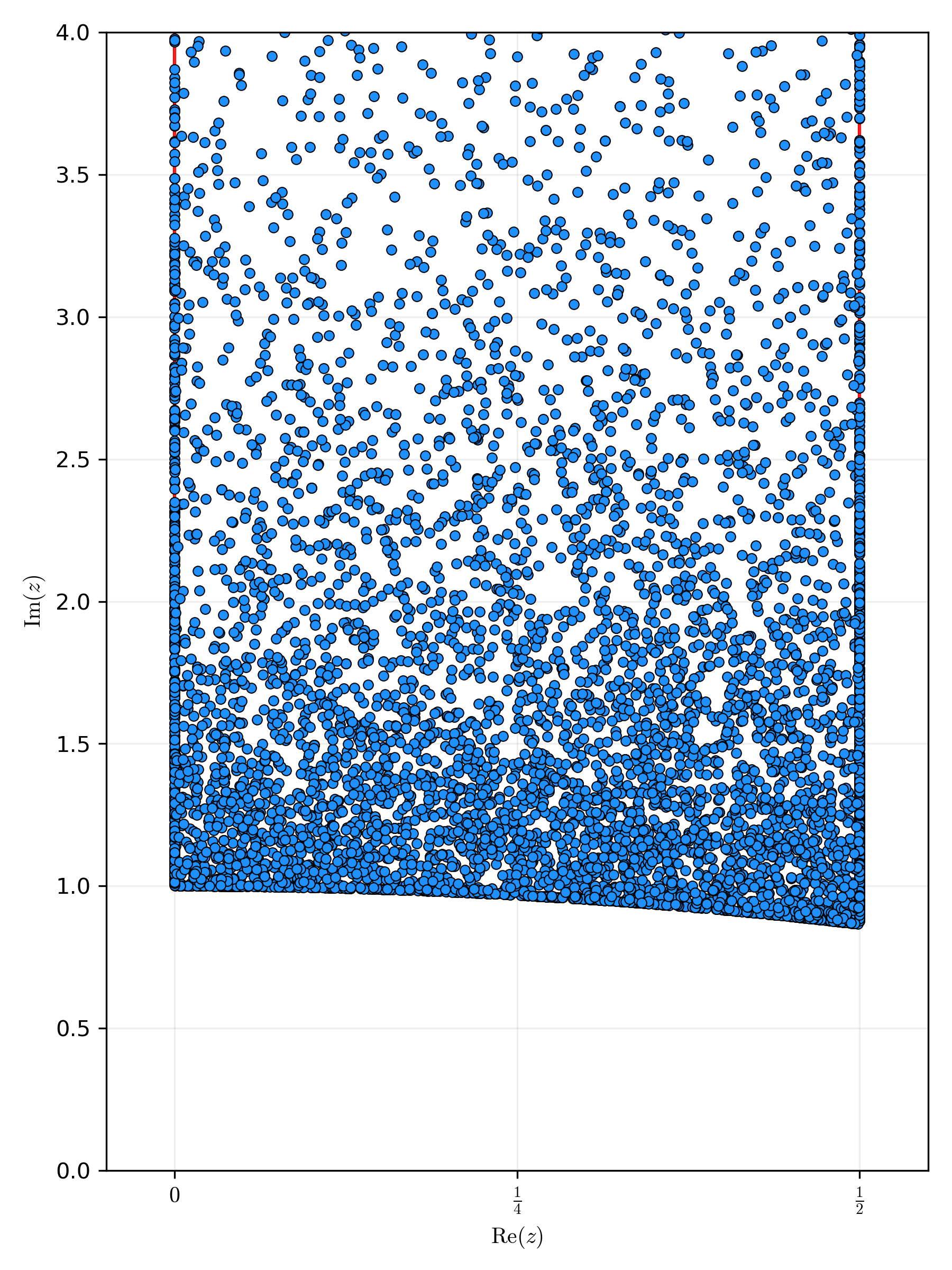}}
    \caption{Unit shapes of $D_6$ sextics of signature $(0,3)$ plotted in the fundamental domain $\mathcal{D}$ of $\mathcal{S}_2$.}
    \label{fig: D6_sextic_plot}
\end{figure}

The admissible groups for each signature of a unit rank 2 number field are enumerated in \Cref{prop:Galois groups rank 2}. It organizes these families into three sharply distinct regimes:
\begin{itemize} 
    \item Fields whose Galois group carries an order $3$ automorphism acting non-trivially on the unit lattice --- among them the cases $G \in \{C_3, C_6, S_3, {S_3\times C_3}\}$ --- have hexagonal unit shape.
    \item The families $\Omega(D_4,2,1)$ and $\Omega_{\non-CM}(G,0,3)$ 
      for $G \in \mathcal{G} \coloneq \{D_6,\, S_4, \ A_4 \times C_2,\, S_4 \times C_2\}$ consist of transcendental shapes confined to the boundary $\partial \mathcal{S}_2$. 

   \item Assuming the Weak Schanuel Conjecture (\Cref{conj: algebraic independence of logs}), the families $\Omega(S_3,3,0)$  and $\Omega_{\CM}(G,0,3)$ 
    for $G \in \mathcal{G}$ lie in the interior of $\mathcal{S}_2$ and are $\overline{\Q}$-generic: no proper $\overline{\Q}$-algebraic subvariety of $\mathcal{S}_2$ contains them, so they are as algebraically unconstrained as possible.
\end{itemize}

The third point stands in sharp contrast to the $D_5$ family of \cite{HHV}, whose unit shapes are constrained to an explicit algebraic hypercycle in $\mathcal{S}_2$; our result shows that this kind of algebraic rigidity is \emph{not} a general feature of rank~$2$ unit shapes (see \Cref{remark: comparison of results}). The classification leaves open the cases of fields with no proper subfields and of $S_3^2$- and $S_3^2 \rtimes C_2$-sextic fields; computational evidence, including a PSLQ search that detects no algebraic relations, supports the conjecture that these too are transcendental and interior (\Cref{conj:remaining Galois groups}).

\subsubsection{The totally imaginary \texorpdfstring{$D_6$}{D6} family.}
For a totally imaginary sextic field $F$ with $G = D_6$, the unique cubic subfield $F_3$ is either totally real --- in which case $F$ is CM and $F_3$ has unit rank~$2$ --- or complex, in which case $F$ is non-CM and $F_3$ has unit rank~$1$. By \Cref{theorem: classification or rank 2}, this single distinction already separates $\Omega(D_6,0,3)$ into the interior CM part $\Omega_{\CM}(D_6,0,3)$ and the boundary non-CM part $\Omega_{\non-CM}(D_6,0,3)$ in $\mathcal{S}_2$. It also governs the following two questions we pursue for this family.
\begin{itemize} \itemsep 2pt
    \item \emph{How strong is the unit shape as an invariant?} The two subfamilies behave in opposite ways. In general, in the CM case, the unit shape is a weak invariant: if $K$ is a fixed totally real number field, then for all but finitely many CM extensions $F/K$, the fields $K$ and $F$ have the same unit shape (\Cref{prop: the shape is weak}, with an explicit family in \Cref{prop:CMfields}). Hence, infinitely many non-isomorphic CM $D_6$-sextic fields share a single shape (\Cref{prop: real cubic shapes are CM shapes}). In the non-CM case, the unit shape is, by contrast, conjecturally a \emph{complete} invariant: assuming the Four Exponentials Conjecture (\Cref{conj: four exponentials problem}), two totally imaginary $D_6$-sextic fields with complex cubic subfield are isomorphic if and only if their unit shapes coincide (\Cref{theorem: Completeness of the shape}).
 
    \item \emph{What does the shape set look like inside $\mathcal{S}_2$?} We show that $\Omega(D_6,0,3)$ is non-discrete. On the CM side, the shapes of totally real $S_3$-cubic fields embed into $\Omega_{\CM}(D_6,0,3)$ (\Cref{prop: real cubic shapes are CM shapes}) and accumulate at the hexagonal point. On the non-CM side, we exhibit the infinite family $F = \Q\!\left(\sqrt[6]{-27(D^6 \pm 1)}\right)$ whose unit shapes lie on the right boundary of $\mathcal{S}_2$ and converge to the hexagonal shape as $D \to \infty$ (\Cref{theorem: limitpoint}); strikingly, the sequence converges to a well-rounded shape although no member of it is itself well-rounded.
\end{itemize}
We emphasize that non-discreteness --- the presence of infinitely many distinct shapes in each subfamily --- is a first step only. The finer questions of the density and equidistribution of unit shapes within $\mathcal{S}_2$ remain open, and motivate much of this work; we return to them in \Cref{sec:conclusion}.

\subsubsection{Orthogonality and density for pure sextics.}
For the pure sextic fields $F$ built from the compositum of a pure cubic $\Q(\sqrt[3]{D^3 \pm 1})$ and an imaginary quadratic field $\Q(\sqrt{-d})$, we determine an explicit quartic polynomial, depending only on $D$, whose lack of an integer root modulo a suitable large factor of $d$ forces the unit shape of~$F$ to be orthogonal, or equivalently, to lie on the left boundary of $\mathcal{S}_2$ (\Cref{prop:orth}). From this construction, we produce infinitely many infinite subfamilies of orthogonal log unit lattices, both for fixed $d$ with $D$ varying (\Cref{cor:fixd_varym}) and for $F = \Q(\sqrt[3]{2}, \sqrt{-d})$ with $d$ varying (\Cref{cor:m=2varyd}). For the family $\Q(\sqrt[3]{2}, \sqrt{-d})$ we go further and quantify how common orthogonality is: at least $68\%$ of these fields have orthogonal log unit lattice (\Cref{thm:2dens}), and when $d = p$ is prime the density is at least $3/8$ (\Cref{cor:density_prime}), via a Chebotarev argument applied to the Galois group of the quartic. 

\subsection{Previous work on log unit lattices} \label{subsection: previous work}

We provide an overview of existing literature on the geometry and shapes of log unit lattices, organized by unit rank.

In rank~$2$, the log unit lattices of cyclic cubic fields are hexagonal. This follows from the more general fact that any rank~$2$ lattice admitting an order $3$ automorphism is hexagonal, established in \cite{terr1997} by Terr and in \cite{TranTian2018SizeFunction} by Tran and Tian. Hexagonality was proved for imaginary cyclic sextic fields by Tran \textit{et al} in \cite{tran2023size}, and the cyclic cubic case was also discussed by Azpeitia-Tellez \textit{et al} in \cite{TPS2021}. David and Shapira \cite{DS_unitlattices} investigated the unit shapes of totally real cubic fields and conjectured that they are dense in $\mathcal{S}_2$, providing evidence via a generalization of the simplest cubic fields. Chari \emph{et al.}~\cite{RNT} explored unit shapes of $D_4$-quartic fields of signature $(2,1)$, showing that they are transcendental points on the boundary of $\mathcal{S}_2$ and constructing explicit algebraic limit points arising from families of such fields.

In rank~$3$, Azpeitia-Tellez \textit{et al} \cite{TPS2021} examined the geometry of log unit lattices of real biquadratic fields and determined when these lattices are orthogonal.

For higher rank, Cruz \cite{cruz2020well} investigated the shortest vectors of log unit lattices of totally real cyclic fields, providing a parametrizing space for the Gram matrices of those admitting a Minkowski unit (see \Cref{def:minkowskiunit}), characterizing their well-roundedness for extension degree at most~$7$, and computing limit points of several infinite families. In \cite{cruz2025classificationtotallyrealnumber}, Cruz proved that two real Galois number fields are isomorphic if and only if their log unit lattices are isometric, and that for extension degree at least~$4$, they are isomorphic if and only if their unit shapes agree. Harron \textit{et al.} \cite{HHV} analyzed unit shapes of $D_p$ number fields of signature $[1,(p-1)/2]$. They showed that these unit shapes lie on an explicit hypersurface in the space of shapes, described as a finite union of torus orbits; in the rank~$2$ case the shapes lie on a single fixed hypercycle. We note that, although this observation was not explicitly stated in that source, the same arguments can also be applied to imaginary Galois $D_p$ fields. 

Lastly, for arbitrary number fields, Kessler \cite{kessler1991minimum} provided a bound on the minimum (i.e.\ the length of any non-zero shortest vector) of the log unit lattice in terms of the unit rank and the degree of the field.


\section{Preliminaries} \label{sec:background}

\subsection{Notation}\label{Notation}

In the context of lattices, we require the following notions and notation. 

\begin{center}
    \begin{tabular}{|c|l|} \hline
    \textit{Symbol} & \textit{Meaning} \\ \hline
    $\rk(L)$ & Rank of a lattice $L$ \\
    $[L]$ & Shape of a lattice $L$ \\
    $\mathcal{G}_m$ & Space of rank $m$  Gram matrices \\
    $\mathcal{S}_m$ & $\mathcal{S}_m = \text{GO}_m(\R)\backslash \gl_m(\R) / \gl_m(\Z)\cong 
    \R^\times\setminus \mathcal{G}_m/ \gl_m(\Z)$, \\
    & space of shapes of rank $n$ lattices 
   
             \\ \hline
    \end{tabular}
\end{center}

In the setting of a fixed number field $F$, we will use the notation in the table below. Most of the quantities associated to $F$ will be identified by the subscript $F$; if the context is clear, we may omit this subscript. 

\begin{center}
    \begin{tabular}{|c|l|} \hline
         \textit{Symbol} & \textit{Meaning} \\ \hline
         $F$ & Number field \\
         $F^{\times}$ & $F\backslash\{0\}$ \\
         $(r,s)$ & Signature of $F$; in particular, $F$ has degree $n = r + 2s$ \\
         $\tilde{F}$ & Galois closure of $F$ \\
         $G = \Gal(\tilde{F}/\Q)$ & Galois group of $\tilde{F}/\Q$ \\
          $\mathcal{F}(G, r, s)$ & Set of fields $F$ with signature $(r,s)$ and $\Gal(\tilde{F}/\Q) = G$ \\
          $\mathcal{F}_{\CM}(G, r, s)$ & Subset of fields in $\mathcal{F}(G, r, s)$ with complex multiplication
         \\
         $\mathcal{F}_{\non-CM}(G, r, s)$ & Subset of fields in $\mathcal{F}(G, r, s)$ without complex multiplication \\ 

        $\Omega(G,r,s)$ & Set of unit shapes of fields in $\mathcal{F}(G, r,s)$ \\

        $\Omega_{\CM}(G,r,s)$ & Set of unit shapes of fields in $\mathcal{F}_{\CM}(G, r, s)$\\

        $\Omega_{\non-CM}(G,r,s)$ & Set of unit shapes of fields in $\mathcal{F}_{\non-CM}(G, r, s)$\\
        
         $\sigma_1, \ldots , \sigma_r$ & Real embeddings $F \hookrightarrow \mathbb{R}$ \\
         $\tau_1, \overline{\tau}_1, \ldots, \tau_s, \overline{\tau}_s$ & Complex embeddings $F \hookrightarrow \mathbb{C}$, where $\overline{\tau}_i$ is the complex conjugate of $\tau_i$ \\
         $\Delta_F$ & Discriminant of $F$ \\
         $O_F$ & Ring of integers of $F$ \\
         $O_F^\times$ & Unit group of $O_F$ \\
         $\mu_F$ & Roots of unity in $F$ and torsion part of $O_F^\times$ \\
         $E_F$ & $E_F \cong O_F^\times / \mu_F$, torsion-free part of $O_F^\times$, of rank $r+s-1$ \\
         $\Log_F$ & Logarithmic Minkowski embedding of $F^\times$ into $\R^{r+s}$ \\
         $\Lambda_F$ & $\Lambda_F = \Log_F(O_F^\times) = \Log_F(E_F)$, log unit lattice of $F$ \\ \hline
    \end{tabular}
\end{center}

In connection with relative number field extensions $F/K$, we employ the following notation. Again, if the context is clear, we may omit subscripts. 

\begin{center}
    \begin{tabular}{|c|l|} \hline
         \textit{Symbol} & \textit{Meaning} \\ \hline
         $F/K$ & Relative extension of number fields \\ 
         
         $N_{F/K}$ & Norm map $F \longrightarrow K$ \\
         $\iota$ & Embedding $\iota : \Lambda_K \rightarrow \Lambda_F$. \\ \hline
    \end{tabular}
\end{center}

If in addition $F/K$ is a Galois extension, we use the following symbols.

\begin{center}
    \begin{tabular}{|c|l|} \hline
         \textit{Symbol} & \textit{Meaning} \\ \hline
         $H = \Gal(F/K)$ & Galois group of $F/K$  \\
         $\gamma$ & Element in $\Gal(F/K)$ \\
         $\Lambda_{F/K}^t$ & $\Lambda_{F/K}^t = \{ \Log_F(u) : \mbox{$u \in O_F^\times$ and $u^{1-\gamma} \in \mu_F$ for all $\gamma \in \Gal(F/K)$}\}$ \\
         $\Lambda_{F/K}^r$ & $\Lambda_{F/K}^r = \{ \Log_F(u) : \text{$u \in O_F^\times$ and $\N_{F/K}(u) \in \mu_K$}\}$ \\
         $v_t, v_r$ & Generators of $\Lambda_{F/K}^t, \Lambda_{F/K}^r$ when $\Lambda_{F/K}^t$ and $\Lambda_{F/K}^r$ have rank 1 \\
         $u_t, u_r$ & Respective pre-images of $v_t, v_r$ in $O_F^\times$ under $\Log_F$
        \\ \hline
    \end{tabular}
\end{center}

\subsection{Lattices and their shapes} \label{ss:lattices}
Let $(V, \langle \cdot, \cdot \rangle)$ be a finite-dimensional real inner product space and let $\|.\| = \sqrt{\langle \cdot, \cdot \rangle}$ be its corresponding norm. A \emph{lattice} in $V$ is a discrete subgroup of $V$. Every lattice $\Lambda$ in in $V$ is of the form 
\[ \Lambda=\mathbb{Z}v_1 + \mathbb{Z}v_2 + \cdots + \mathbb{Z}v_m = \left\{\sum_{i=1}^{m}a_iv_i : a_i\in \mathbb{Z}\right\},  \]
for some positive integer $m$, where $\mathcal{B}=\{v_1,v_2,...,v_m\}$ is a set of linearly independent vectors in $V$. The quantity $m$ is the \emph{rank} of $\Lambda$, and $\mathcal{B}$ is a \textit{basis} of $\Lambda$. In case $m=\dim(V)$, the lattice $\Lambda$ is said to be of \textit{full rank}.

The \emph{Gram matrix} of a basis $\mathcal{B} = \{v_1,v_2,...,v_m\}$ of a rank $m$ lattice $\Lambda$ is the positive definite symmetric matrix $(\langle v_i, v_j \rangle)_{1 \le i, j \le m}$ of pairwise inner products of the vectors in $\mathcal{B}$. By abuse of terminology, we will at times speak of a Gram matrix of $\Lambda$ when we mean the Gram matrix of some fixed basis of $\Lambda$.

 The $i^{th}$-\textit{successive minimum} of $\Lambda$ is the quantity \[\lambda_i(\Lambda)=\inf \big \{r > 0 : \dim\left(\mathrm{span}_\R\{v\in \Lambda : \|v\|\leq r\}\right) \geq i \big \}.\]
It is the smallest positive $r$ such that the ball of radius $r$ contains $i$ linearly independent vectors of $\Lambda$. If a basis $B =\{v_1,\dots, v_m\}\subseteq \Lambda$ satisfies $\|v_i\|=\lambda_i$, $1\leq i\leq m$, then we say that $B$ \emph{achieves} the successive minima of $\Lambda$.
 
 The \textit{smallest norm} of $\Lambda$ is the quantity $\|\Lambda\| = \min_{0\ne u \in \Lambda}\|u\|^2 \in \mathbb{R}^{>0}$, which exists since $\Lambda$ is discrete. It satisfies $\|\Lambda\|=\lambda_1^2(\Lambda)$. Any vector $u \in \Lambda$ with $\|u\|^2=\|\Lambda\|$ is a \emph{shortest vector} of $\Lambda$. 

Two lattices $\Lambda_1, \Lambda_2$ are said to be \textit{isometric} if there exists an orthogonal transformation $T\in \mathrm{O}(V)$ such that $ T \Lambda_1=\Lambda_2$. 
They are called \textit{homothetic} (or \textit{similar}) if there exists a non-zero constant $c\in \mathbb{R}$ and an orthogonal transformation $T\in \mathrm{O}(V)$ such that $ cT \Lambda_1=\Lambda_2$.

\begin{definition} \label{def:WR} \mbox{ }
\begin{enumerate} \itemsep -10pt
\item[(i)] A lattice $\Lambda$ in $V$ is \emph{well-rounded} if its shortest vectors span $\Lambda \otimes_\Z \mathbb{R}$  and \emph{orthogonal} if it has a basis consisting of pairwise orthogonal vectors.     
\item[(ii)] A rank $2$-lattice $L$ is called \emph{hexagonal} if it is similar to the lattice $\mathbb{Z}\begin{bmatrix}
            1\\
            0
        \end{bmatrix} + 
        \mathbb{Z}\begin{bmatrix}
            1/2\\
            \sqrt{3}/2
        \end{bmatrix}$.
\end{enumerate}
		
\end{definition} 

We now restrict to the full rank case $m = \dim(V)$ and identify $V$ with $\R^m$. We write any $v \in V$ as a column vector and denote by $v^T$ its transpose; analogous notation is used for $m \times m$ matrices over $\R$. Denote by~$\mathcal{G}_m$ the set of symmetric positive definite matrices of rank $m$, or equivalently, the space of Gram matrices of rank $m$ lattices.

\begin{definition} \label{def:latticeshape}
The \emph{shape} of $\Lambda$, denoted $[\Lambda]$, is the equivalence class of $\Lambda$ up to scaling, rotation and reflection. The \emph{ambient moduli space $\mathcal{S}_{m}$ of shapes of rank $m$ lattices} is  the set of double cosets %
    \[  \mathcal{S}_{m} = \text{GO}_m(\R) \backslash \text{GL}_m(\R)/\text{GL}_m(\Z), \] 
    where $\text{GO}_m(\R)$ is the group of scaled orthogonal transformations on $\mathbb{R}^m$. The space $\mathcal{S}_m$ may also be identified with the set of rank $m$ Gram matrices up to change of basis and scaling, i.e.
    \[  \mathcal{S}_m = \R^\times \backslash \mathcal{G}_m/ \text{GL}_m(\Z).    \]
\end{definition}

In the special case $m = 2$, the space of rank 2 lattice shapes, up to similarity, is naturally identifiable with the quotient space GL$_2(\Z)/\mathcal{H}$, where $\mathcal{H}$ is the complex upper half plane and the action of $\mathrm{GL}_2(\Z)$ on $\mathcal{H}$ is given by
\[
\begin{bmatrix}
    a&b\\
    c&d
\end{bmatrix}\cdot (x+iy)=\frac{ax+iby}{cx+idy}, \qquad\qquad \begin{bmatrix}
    a&b\\
    c&d
\end{bmatrix}\in \mathrm{GL}_2(\Z), \ x+iy\in \mathcal{H}.
\]
 A fundamental domain for this action is given by the region $$\mathcal{D}\coloneq\{ z = x + iy \in \mathcal{H} : |z| \ge 1, 0\le x \le 1/2\}\subseteq \mathcal H.$$
 Every lattice $\Lambda$ of rank 2 has a basis $\mathcal{B}\coloneq\{v_1, v_2\}$ achieving the successive minima of $\Lambda$ such that $\langle v_1, v_2 \rangle \ge 0$. Applying suitable rotations, reflections, and scalings to $\mathcal{B}$ yields a basis of a lattice similar to $\Lambda$ of the form $\left \{ \begin{bmatrix} 1 \\ 0 \end{bmatrix}, \begin{bmatrix} x \\ y \end{bmatrix} \right \}$ where $x+ iy \in \mathcal{D}$. On the other hand, the Gram matrix of $\mathcal{B}$ has the form
\[
{\|v_1\|^2}\begin{bmatrix}
    1&x\\
    x&x^2+y^2
\end{bmatrix}.
\]
So the similarity classes of rank 2 lattices are in bijection with the points $x + iy \in \mathcal{D}$ via the above procedure. Hence $x + iy \in \mathcal{D}$ can be viewed as the shape of $\Lambda$.

By virtue of this correspondence and with a slight abuse of terminology, we use $\mathcal{S}_2$ to interchangeably refer to the double quotient space of rank 2 lattice shapes GL$_2(\Z)/\mathcal{H}$ and the region $\mathcal{D}$. As usual, we refer to the region $\mathcal{D}$ inside $\mathcal{H}$ as the \emph{(standard) fundamental domain}.

\subsection{Log unit lattices}
Let $F/\Q$ be a number field. Let $\sigma_1, \ldots \sigma_r$ be the real embeddings of $F$ and $\tau_1, \overline{\tau}_1, \ldots , \tau_s, \overline{\tau}_s$ the pairs of complex embeddings of $F$, so $F$ has signature $(r,s)$ and degree $n = r+2s$. Dirichlet's unit theorem establishes the group of units of the ring of integers $O_F$ as a finitely generated abelian group of rank $r+s-1$, i.e.\ 
\[ O_F^\times = E_F \oplus \mu_F \quad \text{with} \quad E_F \cong \Z^{r+s-1}, \]
where $\mu_F$ is the group of roots of unity in $F$. The image of $O_F^\times$ (or of $E_F$) under the logarithmic Minkowski embedding
\begin{align*}
\text{Log}_F: F^\times & \longrightarrow \R^{r+s}   \\
\alpha & \longmapsto
\begin{bmatrix} \log|\sigma_1(\alpha)|, \ \cdots, \ \log|\sigma_r(\alpha)|, \ 2\log|\tau_1(\alpha)|, \ \cdots, \ 2\log|\tau_s(\alpha)|
\end{bmatrix}^T
\end{align*}
lies in the trace-zero hyperplane
\[ \mathbb{H} = \left\{ \begin{bmatrix} x_1, x_2, \cdots ,x_{r+s} \end{bmatrix}^T \in \mathbb{R}^{r+s} \text{\quad \Bigg \vert \quad} \sum_{i=1}^{r+s} x_i = 0 \right\}.\] 

In fact, it is a lattice of rank $r+s-1$ in $\mathbb{H}$. 

\begin{definition} \label{def:logunitlattice} \mbox{ }
\begin{enumerate}
    \item[(i)]   The \emph{log unit lattice} of a field $F$ is the lattice $\Lambda_F = \text{Log}_F(E_F)$ in $\mathbb{H}$.  The \emph{unit shape} of $F$ is the shape $[\Lambda_F]$ of $\Lambda_F$ in $\mathcal{S}_{r+s-1}$.
    \item[(ii)]  The unit shape of $F$ is said to be \emph{hexagonal} if $[\Lambda_F]$ is a hexagonal lattice. 
\end{enumerate}
   
\end{definition}
Note that the map $\Log_F$, and hence the lattice $\Lambda_F$, depends on the choice of ordering of the embeddings of $F$. However, different reorderings of the embeddings will result in isometric, hence similar, log unit lattices.

Finally, in our investigation of log unit lattices, we will require two special types of units. 

\begin{definition} \label{def:minkowskiunit}
Let $F$ be a field, $\epsilon \in O_F^\times$, and $C_\epsilon \subset \C$ the set of Galois conjugates of $\epsilon$. Then $\epsilon$ is a \emph{(strong) Minkowski unit} of~$F$ if~$C_\epsilon \cap F$ generates~$E_F$ and a \emph{weak Minkowski unit} of~$F$ if~$C_\epsilon \cap F$ generates a subgroup of~$E_F$ of finite index.
\end{definition}

In the literature, weak and strong Minkowski units are generally only defined for Galois number fields, in which case $C_\epsilon \cap F = C_\epsilon$. Here, we extend these notions to non-Galois fields. We recall the following classical result due to Minkowski.

\begin{lemma}[\cite{ElementaryandAnalyticofANT}, Theorem 3.26]\label{thm: existence of weak mink unit}
Every Galois number field has a weak Minkowski unit.  
\end{lemma}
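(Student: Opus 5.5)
The plan is to follow the standard argument via the integral group ring and Dirichlet's unit theorem. Let $F/\Q$ be Galois with group $G$, and set $V = \Lambda_F \otimes_\Z \Q$, a $\Q[G]$-module of dimension $r+s-1$. Since $\Log_F(\gamma(\epsilon))$ is obtained from $\Log_F(\epsilon)$ by permuting coordinates, $G$ acts on $\Lambda_F$ through the logarithmic embedding. Note that for Galois $F$ either $r=0$ or $s=0$; we treat the two cases uniformly via the set of infinite places.

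The first step identifies $V$ as a $\Q[G]$-module. Let $S_\infty$ be the set of archimedean places of $F$; $G$ acts transitively on $S_\infty$, and the stabilizer of a place is the decomposition group $D=\langle c\rangle$ (trivial or generated by a complex conjugation). The permutation module $\Q[S_\infty]\cong \Q[G]\otimes_{\Q[D]}\Q$ is then cyclic as a $\Q[G]$-module, generated by the class of a single place. By Dirichlet's unit theorem (Herbrand's refinement), $\Log_F$ identifies $V\otimes\R$ with the trace-zero hyperplane $\mathbb{H}\subset \R[S_\infty]$, which is $G$-stable. Over $\Q$, the augmentation sequence $0\to I\to \Q[S_\infty]\to \Q\to 0$ splits by Maschke's theorem, and $V\otimes\R\cong I\otimes \R$. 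Since two $\Q[G]$-modules that become isomorphic over $\R$ are isomorphic over $\Q$ (characters determine rational representations in characteristic $0$, and the characters agree after extension of scalars), we get $V\cong I$.

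The second step shows that $V$ is cyclic. $I$ is a direct summand of the cyclic module $\Q[S_\infty]$, so it is cyclic, being the image of a generator under the projection. Hence there is $v\in V$ with $\Q[G]v=V$. Clearing denominators, we may take $v=\Log_F(\epsilon)$ for some $\epsilon\in O_F^\times$.

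The final step converts cyclicity into the unit statement. The $\Z$-span of $\{\Log_F(\gamma\epsilon):\gamma\in G\}$ is $\Z[G]v$, a sublattice of $\Lambda_F$ whose $\Q$-span is $V$. It therefore has full rank, and hence finite index in $\Lambda_F$. Pulling back along $\Log_F$, whose kernel on units is $\mu_F$, the conjugates $C_\epsilon=C_\epsilon\cap F$ generate a finite-index subgroup of $E_F$. So $\epsilon$ is a weak Minkowski unit in the sense of \Cref{def:minkowskiunit}.

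I expect the main obstacle to be the first step: passing from the real isomorphism furnished by the $\Log$ map to a rational one. The key point is that $\Lambda_F\otimes\R=\mathbb{H}$ equivariantly, and that rational representations are determined by their real extensions. For this I would invoke the fact that $\mathrm{Hom}_{\Q[G]}(A,B)\otimes\R=\mathrm{Hom}_{\R[G]}(A_\R,B_\R)$, together with the observation that the non-vanishing of a determinant is a Zariski-open condition, nonempty over $\R$ and hence also attained at a rational point.
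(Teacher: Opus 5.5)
Your proof is correct, but it is not the argument behind the statement: the paper gives no proof and simply quotes the classical theorem of Minkowski from Narkiewicz. Minkowski's own argument is elementary.

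\begin{itemize}
\item \emph{Minkowski's route.} Using the lemma from the proof of Dirichlet's theorem, take a unit $\epsilon$ with $|\epsilon|_w<1$ at every archimedean place $w$ except one. Since $G$ acts transitively on the places, for each place $w_k$ some conjugate $\gamma_k(\epsilon)$ is large exactly at $w_k$. The matrix whose $k$-th column is $\Log_F(\gamma_k(\epsilon))$ then has positive diagonal, negative off-diagonal entries and zero column sums. Minkowski's determinant lemma gives it rank $r+s-1$.
\item \emph{Your route.} You use Dirichlet's theorem as a black box and prove Herbrand's theorem $\Q\otimes_\Z\Lambda_F\cong I$. You then read off cyclicity from $I$ being a summand of $\Q[G]\otimes_{\Q[D]}\Q$. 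What you call ``Herbrand's refinement'' in your first step is really only Dirichlet's theorem plus the $G$-equivariance of $\Log_F$; Herbrand's statement is your conclusion.
\end{itemize}

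Each approach buys something different. Minkowski's needs only the lemma that drives Dirichlet's theorem; indeed it reproves the lower bound on the unit rank. It also produces a unit with a single dominant conjugate. Yours yields strictly more than cyclicity, namely the isomorphism class of the $\Q[G]$-module. For instance, it gives \Cref{lem: the representation} at once: $\tilde F_3$ has three places permuted by $S_3$ with stabilizers of order $2$, so $I$ is the standard $2$-dimensional representation.

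Finally, your descent step from $\R$ to $\Q$ can be skipped. With respect to a $\Z$-basis of $\Lambda_F$, the maximal minors of the matrix of $\{gx\}_{g\in G}$ are integer polynomials in the coordinates of $x$. One of them is not identically zero, because $\mathbb{H}$ is already cyclic over $\R[G]$. Hence it is nonzero at some $x\in\Lambda_F$, which directly gives the required unit.
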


\begin{definition} \label{def:relativeunit}
     Let $F/K$ be a proper field extension of number fields. A unit $\epsilon \in O_F^\times$ is a \emph{relative unit} of $F/K$ if its relative norm is a root of unity in $K$, i.e.\ $\N_{F/K}(\epsilon) \in \mu_K$.
\end{definition}

\subsection{A representation associated with the log unit lattice}\label{subsection: representation theory section}
We now introduce some useful tools through representation theory. The main results of this subsection are \Cref{lem: orthogonality of relative units} and \Cref{lem:vtvrortho}  which yield sufficient conditions for the vectors in a log unit lattice to be orthogonal.

Let $F/K$  be a Galois extension of number fields with Galois group $H \coloneq \Gal(F/K)$. The torsion-free part $E_F \cong O_F^\times/\mu_F$ of the unit group $O_F^\times$ is a $\Z[H]$-module, and it is common in the literature to write the $\Z[H]$-module action on $E_F$ as follows: given an element $\alpha=\sum_{g\in H}n_gg \in \Z[H]$ and a unit $u \in E_F$, write
\[u^{\alpha} =\prod_{g\in H}g(u^{n_g}). \]

The log unit lattice $\Lambda_F$ of $F$ inherits a $\Z[H]$-action from the one on $E_F$ via the logarithmic Minkowski embedding $\Log_F$, which we will write additively, so
\[
\alpha\cdot\Log_F(u)=\Log_F(u^\alpha).
\] 
Since $H$ acts on $\Lambda_F$ by permuting the embeddings of $F$ into $\C$, $H$ acts through orthogonal transformations on $\Lambda_F$, so the inner product space $V_F\coloneq\Q\otimes_\Z\Lambda_F$ defines an orthogonal representation of~$H$, i.e.\ a map $H\rightarrow \mathrm{O}(V)$. The trivial component $V_{F/K}^t$ in $V_F$ is the subrepresentation of $V_F$ given by
\[ V_{F/K}^t = \{v\in V_F : gv=v \mbox{ for all } g\in H\}.\] 
Since $V_F$ is an orthogonal representation of $H$, the orthogonal complement of $V_{F/K}^t$ in $V_F$ is also a subrepresentation of $V_F$ which we denote by $V_{F/K}^r$. Thsi yields an orthogonal decomposition of $V_F$ as
\[
V_F = V_{F/K}^t\oplus V_{F/K}^r.
\]
Define 
\begin{equation} \label{eq:orthorep}
\Lambda_{F/K}^t \coloneq \Lambda_F\cap V_{F/K}^t, \quad \Lambda_{F/K}^{r} \coloneq \Lambda_F\cap V_{F/K}^r.
\end{equation}
These lattices can be explicitly described as follows.

\begin{lemma}\label{lem:vtvrortho}
Let $F/K$ be a Galois extension with Galois group $H=\Gal(F/K)$, and let $\Lambda_{F/K}^r$ and $\Lambda_{F/K}^t$ be as given in \eqref{eq:orthorep}. Then
    \begin{align*}
    \Lambda_{F/K}^t &= \{ \Log_F(u) : \mbox{$u \in O_F^\times$ and $u^{1-\gamma} \in \mu_F$ for all $\gamma \in H$}\}, \\
    \Lambda_{F/K}^r &= \{ \Log_F(u) : \text{$u$ is a relative unit of $F/K$}\}.
    \end{align*}
   
Moreover, $\Lambda_{F/K}^r$ is orthogonal to $\Lambda_{F/K}^t$.    
\end{lemma}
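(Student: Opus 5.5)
The orthogonality claim is immediate once the two descriptions are established. By construction $V^r_{F/K}$ is the orthogonal complement of $V^t_{F/K}$, so $\Lambda^r_{F/K}\subseteq V^r_{F/K}$ is orthogonal to $\Lambda^t_{F/K}\subseteq V^t_{F/K}$. The work is therefore in identifying the two intersections with the sets of units described in the statement. Throughout I use two facts. First, $\Log_F$ is a group homomorphism with kernel exactly $\mu_F$ on $O_F^\times$ (Kronecker). Second, $H$ acts on $\Lambda_F$ by $\gamma\cdot\Log_F(u)=\Log_F(\gamma(u))$, through orthogonal permutations of coordinates.

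For $\Lambda^t_{F/K}$: a vector $\Log_F(u)$ lies in $V^t_{F/K}$ exactly when $\gamma\cdot \Log_F(u)=\Log_F(u)$ for all $\gamma\in H$. This is equivalent to $\Log_F(u^{1-\gamma})=\Log_F(u)-\Log_F(\gamma(u))=0$, i.e.\ $u^{1-\gamma}\in\mu_F$ for all $\gamma\in H$. Every element of $\Lambda_F$ has the form $\Log_F(u)$, so both inclusions follow at once.

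For $\Lambda^r_{F/K}$ I use the averaging projector. Put $N=\sum_{\gamma\in H}\gamma\in\Z[H]$ and $P=\frac{1}{|H|}N$ acting on $V_F$. Since $H$ acts orthogonally, $P$ is the orthogonal projection onto $V^t_{F/K}$. Indeed, $P$ is idempotent, its image is exactly the $H$-fixed vectors, and it is self-adjoint because the adjoint of $\gamma$ is $\gamma^{-1}$ and inversion permutes $H$. Hence $V^r_{F/K}=\ker P=\ker N$. For $u\in O_F^\times$ we have
\[
N\cdot\Log_F(u)=\Log_F\Bigl(\prod_{\gamma\in H}\gamma(u)\Bigr)=\Log_F(\N_{F/K}(u)).
\]
Thus $\Log_F(u)\in V^r_{F/K}$ if and only if $\N_{F/K}(u)\in\mu_F\cap K=\mu_K$, which is precisely the condition that $u$ is a relative unit.

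The only step needing care is the identification $V^r_{F/K}=\ker N$, i.e.\ that averaging is the orthogonal (not merely some) projection onto the invariants. I would check self-adjointness as above, after extending scalars to $\R$ so that the inner product is the one on $\R^{r+s}$. I would also note that $\Log_F$ applied to an element of $K$ is still computed in $F$'s embeddings, and that the kernel statement $\Log_F(x)=0\iff x\in\mu_F$ holds for units $x$ of $F$. Since $\N_{F/K}(u)\in O_K^\times\subseteq O_F^\times$, this applies.
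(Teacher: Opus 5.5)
Your proposal is correct and follows essentially the same route as the paper. In both, $\Lambda^t_{F/K}$ and the orthogonality are immediate from the definitions, and $\Lambda^r_{F/K}$ is identified with the kernel of the norm element $N=\sum_{\gamma\in H}\gamma$ using that $N$ is self-adjoint. The paper does this through a chain of inner-product equivalences inside $\Lambda_F$, while you phrase it as $N/|H|$ being the orthogonal projection onto $V^t_{F/K}$.
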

\begin{proof}
    For brevity, write $\Lambda^r = \Lambda_{F/K}^r$ and $\Lambda^t = \Lambda_{F/K}^t$. The description of $\Lambda^t$ follows immediately from its definition, so it suffices to prove the result for $\Lambda^r$.
    By abuse of notation, we view the relative norm $\N_{F/K}$ from $F$ down to $K$ as an element of $\Z[H]$, and write 
    \[
    \N_{F/K}\coloneq\sum_{\sigma\in H}\sigma\in \Z[H].
    \]
     Since $H$ acts orthogonally on $\Lambda_F$, it follows that for every $w\in \Lambda_F$, we have
     \[
    \begin{aligned}
         w \in \Lambda^r &\ \Longleftrightarrow \ \langle v, w\rangle=0 \text{ \ for all } v\in\Lambda^t \text{ by \eqref{eq:orthorep}}\\ &\ \Longleftrightarrow \ [F:K]\langle v,w\rangle=0\text{ \ for all } v\in\Lambda^t\\
         &\ \Longleftrightarrow \ \langle \N_{F/K} \,v,w\rangle=0\text{ \ for all } v\in\Lambda^t\\
         &\ \Longleftrightarrow \ \langle v,\N_{F/K} \,w\rangle =0 \text{ \ for all } v\in\Lambda^t\\
         &\ \Longleftrightarrow \ \N_{F/K} \, w=0 \text{ \ (because $\N_{F/K} \,w\in \Lambda^t$).}
    \end{aligned}
    \]
    This proves the result.
\end{proof}

It will at times be useful to consider the log unit lattice of some field as a sublattice of the log unit lattice of an extension field. This is formalized in the following remark.

\begin{remark}\label{remark: sublattice vs lattice}
    For an extension of number fields $F/K$, there exists a unique injective homomorphism  of abelian groups $\iota:\Lambda_K\rightarrow \Lambda_F$ such that the  following diagram commutes:
    \[\begin{tikzcd}
	{O_K^\times} & {O_F^\times} \\
	{\Lambda_K} & {\Lambda_F}
	\arrow[hook, from=1-1, to=1-2]
	\arrow["{\Log_K}"', from=1-1, to=2-1]
	\arrow["{\Log_F}", from=1-2, to=2-2]
	\arrow["\iota"', dashed, from=2-1, to=2-2]
\end{tikzcd}\]
For ease of notation, by identifying $\Lambda_K$ with $\iota(\Lambda_K) = \Log_F(O_K^\times) \subseteq \Lambda_F$, we may view $\Lambda_K$ as a subgroup  of $\Lambda_F$ and write $\Lambda_K\subseteq \Lambda_F$. However, as a word of caution, care is needed when considering $\Lambda_K$ as a lattice rather than just an abelian group, since $\Lambda_K$ and $\iota(\Lambda_K)$ may have different shapes, even though they are isomorphic as groups. For example, if $F= \Q(\sqrt[8]{3})$ and $K=\Q(\sqrt[4]{3})$, then  $[\Lambda_K]\neq[\iota(\Lambda_K)]$, as the two shapes correspond to different points in $\mathcal{S}_2$. This will be explored in more detail in Section~\ref{sec:shapesD6}. 
\end{remark} 

\Cref{lem:vtvrortho} provides a sufficient condition for the log vectors of the units of two number fields to be orthogonal inside the log unit lattice of their composite. This result will be used extensively throughout the remainder of the paper.

\begin{corollary}\label{lem: orthogonality of relative units} Let $F$ and $F'$ be two number fields such that $F/(F\cap F')$ and $F'/(F\cap F')$ are Galois. Then $\Lambda_{F'}$ is a subgroup of $\Lambda_{FF'/F'}^t$ and $\Lambda_{F/F \cap F'}^r$ is a subgroup of $\Lambda_{FF'/F'}^r$. In other words, if $u$ is a relative unit of $F/F \cap F'$, then $\Log_F(u)$ is orthogonal to  $\Lambda_{F'}$ in $\Lambda_{FF'}$.
\end{corollary}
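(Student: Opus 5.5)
Set $K = F\cap F'$ and $M = FF'$. The plan is to apply \Cref{lem:vtvrortho} to the extension $M/F'$ and verify the two inclusions directly from the explicit descriptions given there. The first step is to check that $M/F'$ is Galois. Since $F/K$ is Galois, $F$ is the splitting field over $K$ of some polynomial $f\in K[x]$. Then $M = F'F$ is the splitting field of $f$ over $F'$, hence $M/F'$ is Galois. Moreover, restriction gives an isomorphism $\Gal(M/F')\cong \Gal(F/F\cap F') = \Gal(F/K)$, injective because $M=F'F$ and with image the fixed group of $F\cap F'$ by the standard theorem on natural irrationalities. This is the step needing the most care, and it uses only that $F/K$ is Galois. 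The hypothesis on $F'/K$ is not needed for this argument, but it is harmless.

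With this in place, the first inclusion is immediate. If $u\in O_{F'}^\times$, then every $\gamma\in\Gal(M/F')$ fixes $u$, so $u^{1-\gamma}=1\in\mu_M$. By \Cref{lem:vtvrortho}, $\Log_M(u)\in\Lambda^t_{M/F'}$. Viewing $\Lambda_{F'}$ inside $\Lambda_M$ via $\iota$ as in \Cref{remark: sublattice vs lattice}, this gives $\Lambda_{F'}\subseteq \Lambda^t_{M/F'}$.

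For the second inclusion, let $u\in O_F^\times$ satisfy $\N_{F/K}(u)\in\mu_K$. Using the restriction isomorphism above,
\[
\N_{M/F'}(u)=\prod_{\gamma\in\Gal(M/F')}\gamma(u)=\prod_{\sigma\in\Gal(F/K)}\sigma(u)=\N_{F/K}(u)\in\mu_K\subseteq\mu_M .
\]
So $u$ is a relative unit of $M/F'$, and \Cref{lem:vtvrortho} gives $\Log_M(u)\in\Lambda^r_{M/F'}$. This proves $\Lambda^r_{F/K}\subseteq\Lambda^r_{M/F'}$, again under the identification $\iota$.

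Finally, the orthogonality statement follows from the last assertion of \Cref{lem:vtvrortho}: $\Lambda^r_{M/F'}\perp\Lambda^t_{M/F'}$. Hence $\Log(u)$, viewed in $\Lambda_{FF'}$, is orthogonal to $\Lambda_{F'}$. One should remark that "$\Log_F(u)$" in the statement means its image $\iota(\Log_F(u))=\Log_{FF'}(u)$ in $\Lambda_{FF'}$. This matters because orthogonality is a property of the embedding in $\Lambda_{FF'}$, not of $\Lambda_F$ itself.
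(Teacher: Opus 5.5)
Your proposal is correct and follows essentially the same route as the paper: $FF'/F'$ is Galois and restriction gives $\Gal(FF'/F')\cong\Gal(F/F\cap F')$. Hence $\Lambda_{F'}$ lies in the trivial part because $\Gal(FF'/F')$ fixes $F'$, the identity $\N_{FF'/F'}(u)=\N_{F/F\cap F'}(u)$ puts relative units of $F/F\cap F'$ into $\Lambda^r_{FF'/F'}$, and orthogonality then follows from \Cref{lem:vtvrortho}. Your observation that only the Galois hypothesis on $F/(F\cap F')$ is used is right. One small fix: write $\mu_{F\cap F'}\subseteq\mu_{F'}$ rather than $\subseteq\mu_{FF'}$, since a relative unit of $FF'/F'$ must have norm in $\mu_{F'}$.
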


\begin{proof} The extension $FF'/F'$ is Galois. Since its Galois group $\Gal(FF'/F')$ fixes every element of $F'$, it is clear that that $\Lambda_{F'} \subseteq \Lambda_{FF'/F'}^t$.

The restriction map $\Gal(F F'/F')\longrightarrow \Gal(F/F\cap F')$ is an isomorphism, so $\N_{FF'/F'}(\theta) = \N_{F/F \cap F'}(\theta)$ for all $\theta \in F$. In particular, if $u \in O_F^\times$ is a relative unit of $F/F \cap F'$, then 
\[ \N_{FF'/F'}(u) = \N_{F/F \cap F'}(u) \in \mu_{F \cap F'} \subseteq \mu_{F'}, \]
so $u$ is also relative unit of $FF'/F'$. Thus, $\Lambda_{F/F \cap F'}^r \subseteq \Lambda_{FF'/F'}^r$.
\end{proof}

We finish this subsection with the following result stating that $\Lambda_K$ has a finite index in $\Lambda_{F/K}^t$ when we embed $\Lambda_K$ as a subgroup of $\Lambda_F$, and establising that $\Gal(F/K)$ acts trivially on $\Lambda_F$ if and only if $F/K$ is a CM-extension.
\begin{proposition}\label{prop: subfield in trivial part}
    Let $F/K$ be a Galois extension with Galois group $H\coloneq\Gal(F/K)$, and view $\Lambda_K$ as a subgroup of $\Lambda_F$ as in \Cref{remark: sublattice vs lattice}. Then $\Lambda_K\subseteq \Lambda_{F/K}^t$ and the index $[\Lambda_{F/K}^t:\Lambda_K]$ is finite. Moreover, $\Lambda_F=\Lambda_{F/K}^t$ if and only if $F/K$ is a CM extension.
\end{proposition}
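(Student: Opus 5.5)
The plan has three parts: the inclusion, the finite index, and the CM criterion.

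For the inclusion $\Lambda_K\subseteq\Lambda_{F/K}^t$, we use that every $u\in O_K^\times$ is fixed by every $\gamma\in H$. Hence $u^{1-\gamma}=1\in\mu_F$, and the description of $\Lambda_{F/K}^t$ in \Cref{lem:vtvrortho} gives $\Log_F(u)\in\Lambda_{F/K}^t$. This is the same observation as in the first part of \Cref{lem: orthogonality of relative units}.

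For the finite index, take $v=\Log_F(u)\in\Lambda_{F/K}^t$. Then $\gamma v=v$ for all $\gamma\in H$, so
\[
\N_{F/K}\,v=\sum_{\gamma\in H}\gamma v=[F:K]\,v .
\]
On the other hand, $\N_{F/K}\,v=\Log_F(\N_{F/K}(u))$ and $\N_{F/K}(u)\in O_K^\times$, so $[F:K]\,v\in\Lambda_K$. Thus $[F:K]\Lambda_{F/K}^t\subseteq\Lambda_K\subseteq\Lambda_{F/K}^t$. Since $\Lambda_{F/K}^t$ is a finitely generated free abelian group, the quotient $\Lambda_{F/K}^t/\Lambda_K$ is finitely generated and killed by $[F:K]$, hence finite.

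For the last statement, finite index gives $\rk\Lambda_{F/K}^t=\rk\Lambda_K=r_K+s_K-1$. So $\Lambda_F=\Lambda_{F/K}^t$ is equivalent to $\rk\Lambda_F=\rk\Lambda_K$, i.e.\ $r_F+s_F=r_K+s_K$. To see the equivalence of the two conditions, write $\Lambda_F=\Lambda_{F/K}^t\oplus_\perp(\text{stuff})$ up to finite index, with $V_F=V^t\oplus V^r$. If the ranks agree, then $V^r=0$; since $\Lambda_{F/K}^t=\Lambda_F\cap V^t$, this forces $\Lambda_F=\Lambda_{F/K}^t$. Conversely, equality of lattices forces equality of ranks.

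It remains to show that $r_F+s_F=r_K+s_K$ holds exactly when $F/K$ is CM. Set $d=[F:K]$ and count places. Each real place of $K$ has above it either $d$ real places of $F$, or $d/2$ complex places of $F$ (the latter only for $d$ even, since $F/K$ is Galois and all places above it look alike). Each complex place of $K$ has $d$ complex places of $F$ above it. Therefore $r_F+s_F\ge r_K+s_K$, with equality if and only if every place of $K$ has exactly one place of $F$ above it. This happens precisely when either $d=1$ (excluded, or counted as trivial), or $d=2$, $K$ is totally real and $F$ is totally imaginary. That last case is exactly the CM condition. If $s_K>0$, equality would require $d=1$.

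The main obstacle is the convention about what ``CM extension'' means. The place count shows that equality forces $K$ totally real and $F$ a totally imaginary quadratic extension of $K$, which is the standard definition of a CM extension, so no further work should be needed. The case $F=K$ gives equality trivially; I would treat it as degenerate and assume $[F:K]>1$ implicitly.
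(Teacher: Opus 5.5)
Your proof is correct. The inclusion $\Lambda_K\subseteq\Lambda_{F/K}^t$ and the finite-index argument, $[F:K]\,\Lambda_{F/K}^t\subseteq\Lambda_K$ via the norm element $\sum_{\gamma\in H}\gamma$, are exactly what the paper does. Your reduction of $\Lambda_F=\Lambda_{F/K}^t$ to an equality of ranks is also the paper's step: it shows that finiteness of $[\Lambda_F:\Lambda_{F/K}^t]$ is equivalent to $V_F=V_{F/K}^t$, hence to $\Lambda_F=\Lambda_{F/K}^t$. Your phrase ``$\Lambda_F=\Lambda_{F/K}^t\oplus_\perp(\text{stuff})$ up to finite index'' is loose. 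The clean form is that $\Lambda_{F/K}^t=\Lambda_F\cap V_{F/K}^t$ spans $V_{F/K}^t$. So equal ranks give $V_{F/K}^t=V_F$, and then $\Lambda_{F/K}^t=\Lambda_F$.

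The genuine difference is the final equivalence with the CM condition. The paper simply cites the classical fact (Corollary 1 of \cite{ElementaryandAnalyticofANT}) that $[\Lambda_F:\Lambda_K]$ is finite exactly when $F/K$ is CM. You instead reprove it from Dirichlet's unit theorem by counting archimedean places. You use that $F/K$ is Galois, so the places above a real place of $K$ are either all $d$ real or all $d/2$ complex, and the count becomes clean. Your route is self-contained and elementary; the paper's is shorter and leans on the literature.

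Your count also exposes the degenerate case $F=K$. There $\Lambda_F=\Lambda_{F/K}^t$ holds trivially, yet $F/K$ is not a CM extension. So the statement as written tacitly assumes $[F:K]>1$, as in all of the paper's applications, and your caveat is justified rather than a gap.
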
 
\begin{proof}
    Since $H$ acts trivially on $\Lambda_K$ by definition, we have $\Lambda_K\subseteq \Lambda_{F/K}^t$. To show that the index $[\Lambda_{F/K}^t:\Lambda_K]$ is finite, we prove $[F:K]\,\Lambda_{F/K}^t\subseteq \Lambda_K$. Let $u\in \Log_F^{-1}(\Lambda_{F/K}^t)$. Using the fact that $H$ acts trivially on $\Lambda_{F/K}^t$, we obtain
    \[
    [F:K]\, \Log_F(u)=\sum_{g\in H}g \Log_F(u)=\Log_F(\N_{F/K}(u))\in \Lambda_K,
    \]
    which gives $[F:K]\, \Lambda_{F/K}^t\subseteq \Lambda_K$ as desired. 

    For the last part, note that since $[\Lambda_{F/K}^t:\Lambda_K]$ is finite, the index $[\Lambda_F:\Lambda_K]$ is finite if and only if $[\Lambda_F:\Lambda_{F/K}^t]$ is finite. By \cite[Corollary 1]{ElementaryandAnalyticofANT}, the former holds precisely when $F/K$ is a CM extension, so it remains to show that $[\Lambda_F:\Lambda_{F/K}^t]$ is finite if and only if $\Lambda_F = \Lambda_{F/K}^t$.

    Since $\Lambda_{F/K}^t = \Lambda_F \cap V_{F/K}^t$, finiteness of $[\Lambda_F:\Lambda_{F/K}^t]$ is equivalent to $V_F = V_{F/K}^t$, which is in turn equivalent to $\Lambda_F = \Lambda_F \cap V_{F/K}^t = \Lambda_{F/K}^t$. The result now follows.

 \end{proof}

\section{Failure of the unit shape as a classifying invariant for CM fields}\label{section: CM-Fields}
It is natural to ask whether the unit shape of a number field determines the field up to isomorphism. In this section, we answer this question in the negative for CM fields; that is, CM fields with the same shape need not be isomorphic. In fact, we show in \Cref{prop: the shape is weak} that for a fixed totally real number field $K$, for all but finitely many CM extensions $F/K$, we have $[\Lambda_F]=[\Lambda_K]$. In \Cref{prop:CMfields} we provide a concrete family of CM extensions $F/K$ for which $[\Lambda_F]=[\Lambda_K]$.
We will see in \Cref{subsection: shapecompleteinvariant} that this is in complete contrast to the setting of totally imaginary non-CM $D_6$-sextic number fields, where shape is a complete invariant. Indeed, \Cref{theorem: Completeness of the shape} establishes that any two non-CM $D_6$-sextic fields $F$ and $F'$ are isomorphic if and only if $[\Lambda_F]=[\Lambda_{F'}]$.

We begin with a useful observation on the unit shapes of CM extensions $F/K$. Recall that there exists a unique injective group homomorphism $\iota: \Lambda_K \longrightarrow \Lambda_F$ that makes the diagram in \Cref{remark: sublattice vs lattice} commute. \Cref{prop: CMsublattice vs lattice} shows that if $F/K$ is a CM-extension, then $[\Lambda_K]=[\iota(\Lambda_K)]$. 

\begin{proposition}\label{prop: CMsublattice vs lattice}
    Let $F/K$ be a CM extension of number fields and let $\Lambda_K$ be the log unit lattice of $K$. Then $[\Lambda_K]=[\Log_F(O_K^\times)]$. 
\end{proposition}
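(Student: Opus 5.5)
Since $F/K$ is a CM extension, $K$ is totally real of some degree $m$ and $F$ is totally imaginary of degree $2m$. I will compare the two embeddings coordinatewise and show that $\Log_F$ restricted to $O_K^\times$ is a scalar multiple of $\Log_K$, followed by an isometry. That gives equality of shapes immediately.

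Label the real embeddings of $K$ as $\sigma_1,\dots,\sigma_m$. Each $\sigma_i$ extends to exactly two embeddings of $F$ into $\C$. Because $F$ is totally imaginary, these two extensions are a complex-conjugate pair $\tau_i,\overline{\tau}_i$. So the $m$ pairs of complex embeddings of $F$ can be ordered so that $\tau_i|_K=\sigma_i$. For $u\in O_K^\times$ we have $\tau_i(u)=\sigma_i(u)\in\R$, so $|\tau_i(u)|=|\sigma_i(u)|$. Hence
\[
\Log_F(u)=\bigl[2\log|\sigma_1(u)|,\dots,2\log|\sigma_m(u)|\bigr]^T=2\,\Log_K(u),
\]
where both sides sit in $\R^m$. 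A different ordering of the embeddings of $F$ only permutes coordinates, and a permutation is an orthogonal map.

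It follows that $\Log_F(O_K^\times)=2\Lambda_K$, up to an orthogonal coordinate permutation. Since shapes are invariant under scaling and orthogonal transformations (Definition~\ref{def:latticeshape}), we get $[\Log_F(O_K^\times)]=[\Lambda_K]$. Both lattices live in trace-zero hyperplanes of the same ambient space $\R^m$, and the map $x\mapsto 2x$ carries one hyperplane onto the other. So the comparison takes place with the same inner product.

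There is no real obstacle. The only point needing care is the bookkeeping fact that each real embedding of $K$ extends to a conjugate pair of complex embeddings of $F$, and that these pairs account for all $s=m$ pairs. This holds because $F$ is totally imaginary with $[F:K]=2$. It is exactly this uniform factor $2$ that fails in non-CM situations such as the example in Remark~\ref{remark: sublattice vs lattice}, where some real places of $K$ stay real in $F$ and others become complex.
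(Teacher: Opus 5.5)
Your proof is correct and is essentially the paper's argument. Each real embedding of $K$ extends to exactly one conjugate pair of embeddings of $F$, so $\Log_F(u)=2\Log_K(u)$ for $u\in O_K^\times$. The paper phrases this as inner products scaling by $4$, which makes $\iota/2$ an isometry, and equality of shapes follows from invariance under scaling.
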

\begin{proof}
    Every (real) embedding $\sigma_i:K\rightarrow \R$ extends to exactly one conjugate pair of (complex) embeddings $\tau_i,\overline{\tau}_i :F\rightarrow \C$.
    Letting  $\iota:\Lambda_K\rightarrow \Log_F(O_K^\times)$ be the group isomorphism of \Cref{remark: sublattice vs lattice}, it is easy to verify that
    \[\langle \Log_F(u), \Log_F(u')\rangle = \langle \iota(\Log_K(u)),\iota(\Log_K(u'))\rangle = 4\langle \Log_K(u),\Log_K(u')\rangle\]
    for all $u, u' \in O_K^\times$. It follows that the homomorphism of groups 
    \[\frac{\iota}{2}:\Lambda_K\rightarrow \frac{\Log_F(O_K^\times)}{2}\] 
    is an isometry, so $[\Lambda_K]=[\Log_F(O_K^\times)]$ as claimed. 
\end{proof}

The unit shape of a CM field is almost always solely determined by the unit shape of its index 2 subfield.

\begin{proposition}\label{prop: the shape is weak}
    Let $K$ be a totally real number field. There exist only finitely many CM extensions $F/K$ such that $[\Lambda_K] \neq [\Lambda_F]$. 
\end{proposition}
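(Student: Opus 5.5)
For a CM extension $F/K$ with $K$ totally real, I will compare $\Lambda_F$ with its subgroup $\Log_F(O_K^\times)$. By \Cref{prop: CMsublattice vs lattice}, $[\Log_F(O_K^\times)] = [\Lambda_K]$. By \Cref{prop: subfield in trivial part}, $\Lambda_F = \Lambda_{F/K}^t$ and $\Log_F(O_K^\times)$ has finite index there. The plan is to show that this index is $1$ for all but finitely many CM extensions $F/K$. Index $1$ means $\Lambda_F = \Log_F(O_K^\times)$, and then $[\Lambda_F] = [\Lambda_K]$.

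First I determine the index. By the proof of \Cref{prop: subfield in trivial part}, $2\Lambda_F \subseteq \Log_F(O_K^\times)$, so the index divides $2^{\rk}$. More precisely, I use the classical Hasse unit index $Q = [O_F^\times : \mu_F O_K^\times] \in \{1,2\}$. Since $\Log_F$ kills exactly $\mu_F$, we get
\[
[\Lambda_F : \Log_F(O_K^\times)] = [O_F^\times : \mu_F O_K^\times] = Q .
\]
It therefore suffices to show that $Q = 2$ for only finitely many CM extensions $F/K$.

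Next I characterize $Q = 2$. Suppose $u \in O_F^\times$ is not in $\mu_F O_K^\times$. The map $\phi(u) = u/\bar u$ lands in $\mu_F$, and $Q = [\phi(O_F^\times) : \mu_F^2]$. So $Q = 2$ means there is a unit $u$ with $u/\bar u = \zeta$ for some $\zeta \in \mu_F \setminus \mu_F^2$. Write $w = |\mu_F|$ and fix a root of unity $\xi$ of order $2w$, possibly outside $F$. For suitable $\zeta$ we can write $u = \xi \cdot \eta$ with $\eta \in K(\xi)$ real under conjugation, i.e.\ $\eta$ fixed by complex conjugation. From this I will show that $F = K(\sqrt{-\varepsilon})$ or $F = K(\sqrt{-\varepsilon}\cdot\text{(root of unity)})$ for some totally positive unit $\varepsilon \in O_K^\times$, possibly after adjoining roots of unity. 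Concretely, $\alpha = u(1+\zeta^{-1})$, or a similar expression, satisfies $\bar\alpha = \pm\alpha$ with $\alpha^2 \in K$ a unit up to a root-of-unity factor. Either $u^2 \zeta^{-1} \in K$ or a related expression gives $F = K(\sqrt{\beta})$, where $\beta$ lies in the finite set $\{-\varepsilon\cdot\rho\}$. Here $\varepsilon$ ranges over $O_K^\times / (O_K^\times)^2$, which is finite, and $\rho$ ranges over roots of unity of degree bounded in terms of $[K:\Q]$. The degree bound holds because $\rho$, or $\rho + \bar\rho$, must lie in $K$ or $F$, and $[F:\Q] = 2[K:\Q]$.

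So every CM extension $F/K$ with $Q = 2$ is $K(\sqrt{\beta})$ with $\beta$ in a finite set depending only on $K$. Hence there are finitely many such $F$. For all other $F$ we have $\Lambda_F = \Log_F(O_K^\times)$, and \Cref{prop: CMsublattice vs lattice} gives $[\Lambda_F] = [\Lambda_K]$.

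The main obstacle is the second step: showing rigorously that $Q = 2$ forces $F$ to be generated over $K$ by the square root of an element from a finite, $K$-dependent set. The delicate part is bounding the roots of unity that can occur. I would first handle it via the standard fact that the $\mu_F$ possible for CM quadratic extensions of a fixed $K$ are bounded, since $\Q(\zeta_w)^+ \subseteq K$. Then I would take $\beta = u^2/\zeta$, adjusted so that it lies in $K$, and check directly that $F = K(u) = K(\sqrt{\beta'})$ with $\beta' \in O_K^\times \cdot \mu_{2w}$.
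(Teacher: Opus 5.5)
Your proof is correct, but it reaches finiteness by a different route than the paper. The two proofs share their first step. Both identify $[\Lambda_K]$ with $[\Log_F(O_K^\times)]$ and observe that $[\Lambda_F:\Log_F(O_K^\times)]=[O_F^\times:\mu_F O_K^\times]\le 2$; the paper cites Remak for this bound. From there the paper picks a unit $\epsilon$ with $\Log_F(\epsilon)\in\Lambda_F\setminus\Lambda_K$. It notes that $\epsilon\notin K$, so $F=K(\epsilon)$, and then cites Greither's theorem that only finitely many CM extensions of $K$ are generated by a unit.

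You instead prove the finiteness directly, using $u\mapsto u/\bar u$. Your middle paragraph is vaguer than it needs to be. The identity $u^2/\zeta=u\bar u=\N_{F/K}(u)$ shows that $\beta=u^2/\zeta$ is already a totally positive unit of $K$, so no adjustment is needed. The auxiliary $\xi,\eta$ are also unnecessary; incidentally, $\eta$ lies in $F(\xi)$, not in $K(\xi)$.

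The argument splits cleanly into two cases:
\begin{itemize}
\item If $w>2$, then $F\supseteq K(\zeta_w)$ forces $F=K(\zeta_w)$. Since $\Q(\zeta_w)^+\subseteq K$, only finitely many $w$ are possible.
\item If $w=2$, then $\zeta=-1$, so $u^2=-\N_{F/K}(u)$ and $F=K(\sqrt{-\varepsilon})$. Here $\varepsilon$ is a totally positive unit, which may be taken modulo $(O_K^\times)^2$.
\end{itemize}

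The paper's proof is shorter but places all the arithmetic content in an external theorem. Yours is self-contained and is essentially a proof of the special case of Greither's result that is needed. It also produces an explicit finite list of candidate exceptional fields $F$, and hence an explicit bound on how many there can be.
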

\begin{proof}
    By \Cref{prop: CMsublattice vs lattice}, we have $[\Lambda_K] = [\Log_F(O_K^\times)]$, so we will identify these two lattices and view $\Lambda_K$ as a sublattice (and not just a subgroup) of $\Lambda_F$. A classical result of Remak~\cite{remak1954algebraische} establishes the inequality $[\Lambda_F:\Lambda_K]\leq 2$. It follows that if the inclusion $O_K^\times\hookrightarrow O_F^\times$ does not induce an isomorphism of abelian groups between $E_K$ and $E_F$, then $[\Lambda_F:\Lambda_K]=2$. Let $v\in \Lambda_F\setminus \Lambda_K$, and let $\epsilon\in F$ be a unit such that $\Log_F(\epsilon)=v$. Since $[F:K]=2$, we have $F=K(\epsilon)$. By \cite[Theorem 1]{greither2017cm}, there are only finitely many CM extensions $F/K$ of the form $F=K(\epsilon')$ for some unit $\epsilon'\in O_F^\times$.  This gives the result.
\end{proof}

Next, we give an explicit family of CM fields whose unit shapes are always the same as the unit shape of their common index 2 subfield. The proof of this result makes use of the following standard result about rings of integers of composite fields.

\begin{lemma}[\mbox{\cite[Prop.\ 2.11 and the subsequent remark]{Neukirch}}]\label{prop:index1}
    Let $F$ and $F'$ be number fields. 
    If $F$ and $F'$ have coprime discriminants, then $O_{FF'} = O_F \otimes_\Z O_{F'}$.
  \end{lemma}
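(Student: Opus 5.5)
The plan is the classical discriminant argument, run once on each side and combined using the coprimality of $d=\Delta_F$ and $d'=\Delta_{F'}$. Throughout, take $[FF':\Q]=[F:\Q][F':\Q]$, as in the setting of \cite{Neukirch}. Equivalently, $F$ and $F'$ are linearly disjoint over $\Q$. In the later applications this holds because one of the two fields is Galois and, by Minkowski's bound $|\Delta|>1$, coprime discriminants force $F\cap F'=\Q$. Put $n=[F:\Q]$, $n'=[F':\Q]$, and fix integral bases $\alpha_1,\dots,\alpha_n$ of $O_F$ and $\beta_1,\dots,\beta_{n'}$ of $O_{F'}$.

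\textbf{Step 1 (reduction).} By linear disjointness the products $\alpha_i\beta_j$ form a $\Q$-basis of $FF'$. Hence the multiplication map $O_F\otimes_\Z O_{F'}\to FF'$ is injective, and its image is $O_FO_{F'}=\bigoplus\Z\alpha_i\beta_j$. Since $O_FO_{F'}\subseteq O_{FF'}$ trivially, it remains to show $O_{FF'}\subseteq O_FO_{F'}$.

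\textbf{Step 2 (denominators divide $d$).} Let $x\in O_{FF'}$. Since the $\alpha_i$ form an $F'$-basis of $FF'$, write $x=\sum_i \gamma_i\alpha_i$ with $\gamma_i\in F'$. Because the degree is multiplicative, each embedding $\sigma_k$ of $F$ ($k=1,\dots,n$) extends to an embedding of $FF'$ that is the identity on $F'$. This gives a linear system
\[
\sigma_k(x)=\sum_i \sigma_k(\alpha_i)\gamma_i .
\]
Its matrix $T=(\sigma_k(\alpha_i))$ satisfies $\delta:=\det T$ with $\delta^2=d$. By Cramer's rule, $\delta\gamma_i$ is a determinant whose entries are algebraic integers, so $\delta\gamma_i$ is integral. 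Therefore $d\gamma_i=\delta\cdot\delta\gamma_i$ is an algebraic integer lying in $F'$, i.e.\ $d\gamma_i\in O_{F'}$. Expanding $d\gamma_i$ in the basis $\beta_j$ shows $dx\in\bigoplus\Z\alpha_i\beta_j$.

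\textbf{Step 3 (conclusion).} Running the same argument with the roles of $F$ and $F'$ swapped gives $d'x\in\bigoplus\Z\alpha_i\beta_j$. Since $\gcd(d,d')=1$, choose $a,b\in\Z$ with $ad+bd'=1$. Then $x=a(dx)+b(d'x)$ lies in $O_FO_{F'}$, which completes the proof.

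The only delicate point is Step 2: it needs the $n$ embeddings of $F$ to extend to embeddings of $FF'$ fixing $F'$. This is exactly where multiplicativity of degrees is used. Everything after that is routine integrality bookkeeping.
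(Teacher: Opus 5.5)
Your Steps~1--3 are the argument behind the result the paper cites (Neukirch, Prop.~2.11). Cramer's rule on $(\sigma_k(\alpha_i))$, whose determinant squares to $d$, gives $dx\in\bigoplus\Z\alpha_i\beta_j$. The symmetric argument gives the same for $d'x$, and B\'ezout finishes. Assuming $F$ and $F'$ are linearly disjoint, every step is correct. That includes the point you flag: the $[FF':F']=n$ embeddings of $FF'$ over $F'$ restrict to all $n$ embeddings of $F$.

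\textbf{The gap.} You \emph{assume} $[FF':\Q]=[F:\Q][F':\Q]$ instead of deriving it. Your justification ("one field is Galois and $F\cap F'=\Q$") covers only the paper's applications, where one factor is quadratic. The lemma is stated for arbitrary number fields, and linear disjointness is part of what it claims. Without it, $O_F\otimes_\Z O_{F'}$ has $\Z$-rank $nn'>[FF':\Q]$ and the equality is false. Moreover, $F\cap F'=\Q$ alone does not imply it when neither field is Galois (e.g.\ $\Q(\sqrt[3]{2})$ and $\Q(\zeta_3\sqrt[3]{2})$).

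\textbf{The fix.} Over $\Q$ the missing step follows from coprimality.
\begin{enumerate}
\item The Galois closure $\tilde F$ of $F$ is a compositum of conjugates of $F$, so it is ramified exactly at the primes dividing $\Delta_F$.
\item A prime ramified in $\tilde F\cap F'$ is ramified in both $\tilde F$ and $F'$, so it divides $\gcd(\Delta_F,\Delta_{F'})=1$. By Minkowski's theorem (every number field other than $\Q$ has a ramified prime), $\tilde F\cap F'=\Q$.
\item Since $\tilde F/\Q$ is Galois, $[\tilde F F':F']=[\tilde F:\tilde F\cap F']=[\tilde F:\Q]$. So $\tilde F$ is linearly disjoint from $F'$, and therefore so is its subfield $F$.
\end{enumerate}
With this added, your proof covers the statement as written. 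This deduction is special to the base $\Q$ because it uses Minkowski's theorem. That is why Neukirch, working over a general base, carries a disjointness condition as an explicit hypothesis, which the paper's restatement silently drops.
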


\begin{proposition}\label{prop:CMfields}
    Let $K$ be a totally real number field, $F=K(\sqrt{-d})$ with $d\in \Z_{>0}$ squarefree, and $F_2 = \Q(\sqrt{-d})$. Let $\Delta_K$ and $\Delta_{F_2}$ denote the discriminants of $K$ and $F_2$, respectively. Suppose that
    \begin{itemize} 
        \item  $\mu_F$ has order at most $6$ and 
        \item either $d\not\equiv 1,3\pmod{(K^\times)^2}$ and $[K:\Q]$ is odd, or $d \in \{ 1, 3\}$ and $\gcd(\Delta_K, \Delta_{F_2}) = 1$.  
    \end{itemize}
    Then $[\Lambda_F]=[\Lambda_K]$.  
\end{proposition}

\begin{proof}
Let $k\coloneq[K:\Q]$. Since $F/K$ is a CM extension, we may assume that $\Lambda_K$ is a sublattice of $\Lambda_F$ by \Cref{prop: CMsublattice vs lattice}. Then 
    $\Lambda_{F}$ and $\Lambda_{K}$ both have rank $k-1$. It follows that the index $\ell=[\Lambda_F:\Lambda_K]$ is finite. 
       
    Assume to the contrary that $\Lambda_{K}$ is properly contained in $\Lambda_{F}$. Then there exists $v\in \Lambda_F \setminus \Lambda_{K}$. Note that $\ell v \in \Lambda_K$. Let $\rho$ be the generator of $H = \Gal(F/K)$ and consider $\rho$ as an element in the group ring $\Z[H]$ acting on $\Lambda_F$. Then $\ell v = \rho \cdot \ell v = \ell \rho\cdot v$, so $v = \rho \cdot v$. Let $\epsilon \in O_F^\times$ such that $\Log_F(\epsilon) = v$.   
    Then $\rho \cdot v = v$ implies that $\epsilon^\rho =\zeta \epsilon$ for some $\zeta\in \mu_F$.
Note that $K \cap F_2 = \Q$ and $F = KF_2$. Let $t = [O_F: O_K[\sqrt{-d}]]$. Then we can write $\epsilon$ in the form $\epsilon= \frac{1}{t}(\theta_1 + \theta_2 \sqrt{-d})$, with $\theta_1, \theta_2 \in O_K$. Computing the relative norm of $\epsilon$ yields
\[ \N_{F/K}(\epsilon) = \epsilon^{1+\rho} = \zeta \epsilon^2 = \frac{\zeta}{t^2} \left ( \theta_1^2-d\theta_2^2+2\theta_1\theta_2\sqrt{-d} \right ) \in O_K^\times. \]
 By assumption $F$ contains 2, 4 or 6 roots of unity, so $\zeta \in \{\pm 1, \pm i, \pm\zeta_3, \pm\zeta_3^2\}$ where $i$ is a primitive fourth root of unity and $\zeta_3$ is a primitive cube root of unity. 

\textbf{Case 1.} $d \not\equiv 1,3\pmod{(K^\times)^2}$ and $k$ is odd. Then $\sqrt{-1} \notin O_F$ and $\sqrt{-3} \notin O_F$, so $\pm i, \pm \zeta_3, \pm \zeta_3^2 \notin \mu_F$, forcing $\zeta = \pm 1$. Since $\N_{F/K}(\epsilon) \in O_K$, we must have $\theta_1 = 0$ or $\theta_2 = 0$. 

If $\theta_1 = 0$, then $\epsilon= \theta_2 \sqrt{-d}/t$, so $\N_{F/K}(\epsilon) = d \theta_2^2/t^2 \in O_{K}^{\times}$. Computing norms  down to $\mathbb{Q}$ yields
$$\pm 1 = \N_{F/\mathbb{Q}}(\epsilon) = \N_{K/\mathbb{Q}}(\N_{F/K}(\epsilon)) = \frac{d^k \N_{K/\mathbb{Q}}(\theta_2)^2}{t^{2k}}, $$
so $\pm d^k$ is a square in $\mathbb{Q}$ and hence in $\mathbb{Z}$. Since $k$ is odd and $d$ is squarefree, this is a contradiction.
If $\theta_2=0$, then $\epsilon = \theta_1/t \in K \cap O_F^\times = O_K^\times$. But then $v \in \Lambda_K$, which is again a contradiction.

\textbf{Case 2.} $d = 1$ and $\gcd(\Delta_K, \Delta_{F_2}) = 1$. Then $O_F = O_K \otimes O_{F_2}$ by \Cref{prop:index1}, so $t = 1$. Here, $\zeta = i^s$ for some   $s \in \{0, 1, 2, 3\}$, so  
    \[ \N_{F/K}(\epsilon) = i^s(\theta_1^2 - \theta_2^2 + 2\theta_1\theta_2 i) .\]
    If $s \in \{0, 2 \}$, then $\theta_1 = 0$ or $\theta_2 = 0$, so $\epsilon = \theta_1$ or $\epsilon = \theta_2 i$. In either case $\epsilon \in O_{K}^\times \times \mu_F$, so $v \in \Lambda_{K}$, which contradicts our original assumption. If $s \in \{1, 3\}$, then $\theta_1^2 = \theta_2^2$, so $\epsilon = \theta_1(1 \pm i)$. But then $\N_{F/\mathbb{Q}}(1 \pm i) = 2^k$ divides $\N_{F/\mathbb{Q}}(\epsilon) = \pm 1$ which is impossible. 

\textbf{Case 3.} $d = 3$ and $\gcd(\Delta_K, \Delta_{F_2}) = 1$. 
Then again $O_F = O_K \otimes O_{F_2}$ by \Cref{prop:index1}. Here, $\zeta = \pm \zeta_3^s$ for some $s \in \{0, 1, 2\}$, so we can write $\epsilon = \theta_1 + \theta_2 \zeta_3$ with $\theta_1, \theta_2 \in O_{K}$. Then  
\[
     \N_{F/K}(\epsilon) = \pm\zeta_3^s \big ( ( \theta_1^2 - \theta_2^2)  + \zeta_3(2\theta_1\theta_2 - \theta_2^2) \big ) = \pm\zeta_3^s \big ( (\theta_1^2 - 2\theta_1\theta_2) + \zeta_3^2(\theta_2^2 - 2\theta_1\theta_2) \big ).  \]
If $s = 0$, then $2\theta_1\theta_2 - \theta_2^2 = 0$, so $\theta_2 = 0$ or $\theta_2 = 2\theta_1$. In the first case $\epsilon = \theta_1 \in O_{K}^\times$, so $v \in \Lambda_{K}$ which is a contradiction. In the second case, $\epsilon = \theta_1(1+2\zeta_3)$, in which case $\N_{F/\mathbb{Q}}(1-\zeta_3^2) = 3^k$ divides $\N_{F/\mathbb{Q}}(\epsilon) = \pm 1$ which is impossible. 

If $s = 1$, then $\theta_1^2 - 2\theta_1\theta_2 = 0$, so $\theta_1 = 0$ or $\theta_1 = 2\theta_2$. In the first case, $\epsilon = \theta_2 \zeta_3 \in O_{K}^\times\times \mu_F$, so $v \in \Lambda_{K}$, a contradiction. In the second case, $\epsilon = \theta_2(2+\zeta_3)$, which again leads to the contradiction of $3^k$ having to divide $\pm 1$. 

If $s = 2$, then $\theta_1 = \pm \theta_2$, in which case $\epsilon = \theta_1(1 \pm \zeta_3)$. In the plus case, $\epsilon  = -\theta_1\zeta_3^2 \in \langle O_{K}^\times, \mu_F\rangle$, so $v \in \Lambda_{K}$, a contradiction. In the minus case, $\N_{F/\mathbb{Q}}(1-\zeta_3) = 3^k$ divides $\N_{F/\mathbb{Q}}(\epsilon) = \pm 1$ which is again impossible.
\end{proof}

\section{Rank 2 log unit lattices} \label{sec:rank2}

An important question pertaining to log unit lattices asks where the unit shapes of number fields are located within the moduli space of all lattices of the same rank, and whether families of fields are algebraically constrained to special subsets of that space. In this section we give an almost complete answer in the rank 2 case, i.e.\ for number fields of signature $(r,s)$ with $r+s-1=2$ except those specified in \Cref{conj:remaining Galois groups}. We first enumerate all possible Galois groups of the Galois closure  $\tilde{F}$ of such a field $F$ (\Cref{prop:Galois groups rank 2}), which forces the degree $[F:\Q]$ to be 3, 4, 5, or 6 depending on the signature. 

Our main result of this section, Theorem~\ref{theorem: classification or rank 2}, organizes the families of unit rank 2 fields characterized in \Cref{prop:Galois groups rank 2} into three sharply distinct regimes. At one extreme, fields whose Galois group carries an automorphism of order 3 acting non-trivially on the unit lattice have hexagonal unit shape. At the other extreme, recalling the notation introduced at the beginning of   \Cref{ss:unit-rank-classification}, families such as $\Omega(S_3, 3, 0)$ and $\Omega_{\mathrm{CM}}(G,0,3)$ have unit shapes that, conditionally on the Weak Schanuel Conjecture (\Cref{conj: algebraic independence of logs}), avoid every proper $\overline{\mathbb{Q}}$-algebraic subvariety of $\mathcal{S}_2$, so meaning they are as algebraically unconstrained as possible. In between lie families such as $\Omega(D_4,2,1)$ and $\Omega_{\mathrm{non\text{-}CM}}(G,0,3)$, whose unit shapes are transcendental but are confined to the boundary of $\mathcal{S}_2$. This is particularly striking in contrast with the $D_5$ family studied in \cite{HHV}, whose unit shapes are constrained to an explicit algebraic curve in $\mathcal{S}_2$; our results show this additional algebraic rigidity is not a general feature of rank 2 unit shapes. The cases of fields with no proper subfields, and of $S_3^2$- and $S_3^2\rtimes C_2$-sextic fields, remain open; computational evidence supports the conjecture that their unit shapes are transcendental and lie in the interior of $\mathcal{S}_2$ (Conjecture~\ref{conj:remaining Galois groups}).

Henceforth, let $\mathcal{F}(G,r,s)$ denote the set of number fields $F$ with signature $(r,s)$ whose Galois closure $\Tilde{F}$ has Galois group $\Gal(\tilde{F}/\Q) = G$. We partition this set into the subsets $\mathcal{F}_{\CM}(G, r, s)$ and $\mathcal{F}_{\non-CM}(G, r, s)$ consisting of fields in $\mathcal{F}(G, r, s)$ with and without complex multiplication, respectively. 


\subsection{Possible Galois groups in unit rank 2}

\begin{proposition}\label{prop:Galois groups rank 2}
   The following isomorphism classes of Galois groups $G$ are possible for fields $F \in \mathcal{F}(G,r,s)$ with unit rank $r+s-1 = 2$, corresponding to each choice of signature $(r,s)$ of $F$:
    \begin{enumerate}
   \item[(i)] If $(r,s)=(3,0)$, then $[F:\Q] = 3$ and $G\cong C_3$ or $S_3$. 
    \item[(ii)] If $(r,s)=(2,1)$, then $[F:\Q] = 4$ and $G\cong D_4$ or $S_4$. In the latter case, $F$ does not contain any non-trivial subfields. 
    \item[(iii)] If $(r,s)=(1,2)$, then $[F:\Q] = 5$ and $G\cong S_5,\: A_5,\: D_5$, or the Frobenius group $F_5 = C_5\rtimes C_4$ (labeled 5T3 in \cite{lmfdb}).
    \item[(iv)] If $(r,s)=(0,3)$, then $[F:\Q] = 6$ and the following Galois groups are possible:
    \begin{enumerate}
        \item[(a)] If $F$ contains a quadratic subfield $F_2$, then $G\cong C_6$, $S_3$, $D_6$, $S_3\times C_3$, $S_3^2$, or $S_3^2 \rtimes C_2$ (labeled 6T13 in \cite{lmfdb}). Moreover, the extension $F/F_2$ is Galois if and only if $G \cong C_6$, $S_3$, or $S_3\times C_3$. 
        \item[(b)] If $F$ contains a cubic subfield, then  $G \cong C_6$, $S_3$, $D_6$, $A_4\times C_2$, $S_4$, or $S_4\times C_2$.
        \item[(c)] If $F$ contains no non-trivial subfields, then $G \cong S_5$ or $S_6$.
    \end{enumerate}
\end{enumerate}
\end{proposition}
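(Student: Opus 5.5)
The plan is to reduce everything to the classification of transitive permutation groups of small degree. Three constraints will do the work. The first is the degree $n=r+2s$, which the condition $r+s-1=2$ forces to be $3,4,5,6$ for the signatures $(3,0),(2,1),(1,2),(0,3)$. The second is the cycle type of complex conjugation. The third is the Galois correspondence between intermediate fields and block systems.

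Concretely, I will fix an embedding $\tilde F\subset\C$ and view $G$ as a transitive subgroup of $S_n$ acting on the $n$ embeddings of $F$. Complex conjugation $c\in G$ then acts as a product of exactly $s$ disjoint transpositions with $r$ fixed points. Moreover, a subfield of $F$ of degree $d$ over $\Q$ corresponds to a block system for $G$ with blocks of size $n/d$. I will then run through the known lists of transitive groups (LMFDB labels $n$T$k$) degree by degree, keeping only those groups that contain an element of the required cycle type.

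The degree $3$ and $4$ cases are short.
\begin{itemize}
\item For $n=3$ the only transitive groups are $C_3$ and $S_3$, and both occur.
\item For $n=4$ the transitive groups are $C_4$, $V_4$, $D_4$, $A_4$ and $S_4$, and $c$ must be a single transposition. The transitive $V_4$ contains only double transpositions, while $C_4$ and $A_4$ contain no transpositions at all. This leaves $D_4$ and $S_4$.
\item When $G=S_4$, the point stabilizer $S_3$ is maximal in $S_4$. Hence there is no proper intermediate field, i.e.\ $F$ has no nontrivial subfields.
\end{itemize}

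The degree $5$ case needs only the involutions. Here $c$ is a double transposition. Among $C_5$, $D_5$, $F_5$, $A_5$, $S_5$, only $C_5$ has no involution. In $D_5$, $F_5$ and $A_5$ every involution fixes exactly one point, hence is a double transposition, and $S_5$ contains double transpositions. So exactly these four groups remain.

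The degree $6$ case is the main work. Now $c$ must be a fixed-point-free involution, that is, a product of three transpositions; in particular $c$ is an odd permutation. I will go through the sixteen groups 6T1 to 6T16 as follows.
\begin{enumerate}
\item \emph{Primitive groups.} These are 6T12 $\cong \mathrm{PSL}_2(5)\cong A_5$, 6T14 $\cong \mathrm{PGL}_2(5)\cong S_5$, 6T15 $=A_6$ and 6T16 $=S_6$. The groups $A_6$ and $\mathrm{PSL}_2(5)$ contain no odd permutations, so they are ruled out. The groups $S_5$ and $S_6$ contain fixed-point-free involutions and are kept. Since primitive means no block systems, these give case (c).
\item \emph{Imprimitive groups.} For each of these I will record which block systems it has. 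Blocks of size $3$ correspond to a quadratic subfield and give case (a); blocks of size $2$ correspond to a cubic subfield and give case (b). A group with both kinds of block system appears in both lists, as happens for $C_6$, $S_3$ and $D_6$.
\item \emph{Groups excluded by the involution condition.} In each imprimitive group I will check for the existence of a fixed-point-free odd involution. This eliminates 6T4 ($A_4$), 6T7 and 6T10 ($3^2\!:\!4$), and one of the two transitive $S_4$'s. The aim is to reproduce exactly the stated lists, where 6T13 survives in (a) and 6T6 $=A_4\times C_2$ and 6T11 $=S_4\times C_2$ survive in (b).
\end{enumerate}

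For the final claim in (a), I will use the following criterion. With $B=\Gal(\tilde F/F_2)$ the index-$2$ block stabilizer and $P=\Gal(\tilde F/F)\le B$ the point stabilizer, the extension $F/F_2$ is Galois if and only if $P$ is normal in $B$. Equivalently, $B$ acts on the three points of a block through a group of order $3$, i.e.\ through $C_3$ rather than $S_3$. I will verify this group by group: it holds for $C_6$, $S_3$ and $S_3\times C_3$, and it fails for $D_6$, $S_3^2$ and 6T13.

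I expect the main obstacle to be the bookkeeping in degree $6$: correctly identifying, for every 6T$k$, its cycle types of involutions and its block systems. This is a finite check, but it is error-prone. I will cross-check it against the LMFDB data on subfields and complex-conjugation types, or against Magma/GAP output.
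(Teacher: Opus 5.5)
Your proposal is correct and follows essentially the same route as the paper. You list the transitive groups of degree $n$, discard those with no element of the cycle type forced on complex conjugation, and sort the degree-$6$ survivors by block systems, which the paper phrases as embedding into $S_3\wr C_2$ or $C_2\wr S_3$. Your block-stabilizer criterion for $F/F_2$ being Galois, and your use of the maximality of $S_3$ in $S_4$, also justify two claims that the paper states without proof.

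Two small points remain. Like the paper, you need explicit examples (e.g.\ from the LMFDB) to show that each surviving group actually occurs. Also, 6T7 is itself the even transitive $S_4$, so your excluded list double-counts it: the degree-$6$ exclusions are exactly 6T4, 6T7, 6T10, 6T12 and 6T15.
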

\begin{proof} Recall if $F$ has degree $n$, then the Galois group $\Gal(\Tilde{F}/\Q)$ of the Galois closure $\Tilde{F}$ of $F$ embeds into $S_n$ as a transitive subgroup. Also note that the embeddings of a normal extension over $\Q$ are either all totally real or all complex.
\begin{enumerate}
    \item[(i)] This case is well-known.
    
    \item[(ii)]  The transitive subgroups of $S_4$ are $C_4$, $C_2^2$, $D_4$, $A_4$, and $S_4$. In this case $C_4$, $C_2^2$ and $A_4$ are excluded. The $C_4$ and $C_2^2$ cases are excluded because they correspond to normal quartic fields of signature $(4,0)$ or $(0,2)$.
    
    The group $A_4$ cannot be the Galois group of a quartic with signature $(2,1)$. To see this, note that $\tilde{F}$ has degree $|A_4| = 12$ in this case. Let $F_R$ be the index~2 subfield of $\tilde{F}$ fixed by complex conjugation.Then $F$ must have an embedding into $F_R$ since $F$ has a real embedding. However, $[F:\Q] = 4$ does not divide $[F_R:\Q] = 6$, so this is imposible. 
    
    The remaining groups, $D_4$ and $S_4$, appear as the Galois group of the Galois closure of a quartic with signature $(2,1)$. See \cite{lmfdb} for examples of number fields with these Galois groups.
    
    \item[(iii)] The transitive subgroups of $S_5$ are $C_5$, $D_{5}$, $F_5$, $A_5$, and $S_5$. Among these, only $C_5$ is excluded because it implies $F=\Tilde{F}$, in which case $F$ has signature $(5,0)$. The remaining groups appear as the Galois group of the Galois closure of a quintic with signature $(1,2)$. See \cite{lmfdb} for examples of number fields with these Galois groups.
    
    \item[(iv)] The isomorphism classes of the transitive subgroups of $S_6$ are $C_6$, $S_3$, $D_{6}$, $A_4$, $S_3\times C_3$, $A_4\times C_2$, $S_4$, $S_3^2$, $C_3^2\rtimes C_4$ (labeled 6T10 in \cite{lmfdb}), $S_4\times C_2$, $A_5$, $S_3^2\rtimes C_2$ (labeled 6T13 in \cite{lmfdb}), $S_5$, $A_6$, $S_6$. We split these up according to whether they occur as the Galois group of (a) a sextic field $F$ with a quadratic subfield, (b) a sextic field $F$ with a cubic subfield, and (c) a sextic field $F$ with no non-trivial subfields:
\begin{itemize}
    \item[(a)] $C_6$, $S_3$, $D_6$, $S_3\times C_3$, $S_3^2$, $C_3^2\rtimes C_4$, and $S_3^2\rtimes C_2$.
    \item[(b)] $C_6$, $S_3$, $A_4$, $D_6$, $S_4$, $A_4\times C_2$, and $S_4\times C_2$.
    \item[(c)] $A_5$, $S_5$, $A_6$, and $S_6$.
\end{itemize}
This matches the remark in \cite[pp 325]{cohen1993course}, although note that there are two ways of embedding $S_4$ as a transitive subgroup of $S_6$. These are labeled 6T7 and 6T8 in \cite{lmfdb}, respectively. By \cite[Theorem 2]{WreSmit}, we can find possible Galois groups of the Galois closure of a sextic with a quadratic subfield or a cubic subfield by looking at the transitive subgroups of the wreath products $S_3\wr C_2$ and $C_2\wr S_3$\,, respectively. Note that $S_3\wr C_2\cong S_3^2\rtimes C_2$ and $C_2\wr S_3\cong S_4\times C_2$; this fact is stated in \cite{lmfdb}. Among the subgroups of $S_3\wr C_2$ and $C_2\wr S_3$, the only groups excluded are those without an element that is the product of three disjoint 2-cycles. This is because complex conjugation must act non-trivially on the roots of a sextic with signature $(0,3)$. Furthermore, since all the roots are complex and complex roots always come in pairs, the automorphism of complex conjugation must embed into $S_6$ as the product of three disjoint 2-cycles. The groups which do not have any embeddings as a transitive subgroup of $S_6$ containing a product of three disjoint 2-cycles are the transitive embedding of $S_4$ that is labeled 6T7,  $A_4$, $C_3^2\rtimes C_4$, $A_5$, and $A_6$. Note that the remaining groups do appear as the Galois group of the Galois closure of a totally imaginary sextic. One can find examples of these sextics in \cite{lmfdb}. Thus, our final list is:
\begin{itemize}
    \item[(a)] $C_6$, $S_3$, $D_6$, $S_3\times C_3$, $S_3^2$, and $S_3^2\rtimes C_2$.
    \item[(b)] $C_6$, $S_3$, $D_6$, $A_4\times C_2$, $S_4$ and $S_4\times C_2$. 
    \item[(c)] $S_5$ and $S_6$.
\end{itemize}
\end{enumerate}
This concludes the result.
\end{proof}

\begin{remark}\label{rmk: rank 2 Galois number fields}
  \Cref{prop:Galois groups rank 2} shows that a number field $F$ with unit rank 2 is Galois if only if $\Gal(F/\Q)$ is isomorphic to $C_3, C_6$, or $S_3$.
\end{remark}

\begin{definition}
    We denote the boundary of $\mathcal{S}_2$, viewed as a subset of $\C$, by $\partial\mathcal{S}_2$, so 
\[
 \partial \mathcal S_2=\{x+iy\in\mathcal{S}_2\, :\, x=0\}\cup {\{x+iy\in\mathcal{S}_2\, :\, x^2+y^2=1\}}\cup \{x+iy\in\mathcal S_2\, :\, x=1/2 \}.
\]

We refer to the three sets in the union above as the \emph{left boundary}, \emph{lower arc} and \emph{right boundary}, respectively.
 We also call a unit shape  \textit{transcendental} if it is a transcendental number in $\mathcal{S}_2$.
\end{definition}
\begin{remark}\label{remark: geometric properties from fundamental domain}
    Let $\Lambda$ be a rank $2$ lattice and let $z=x+iy\in \mathcal{S}_2$ be the point that represents the shape of $\Lambda$. The description of $\mathcal{S}_2$ given in \Cref{sec:background} shows that $\Lambda$ is orthogonal if and only if $z$ lies on the left boundary of $\mathcal{S}_2$, and $\Lambda$ is well-rounded if and only if $z$ lies on the lower arc. 
\end{remark}

The remainder of this section is devoted to the proof of the following theorem, which is one of the main results of this paper. As usual, let $\overline{\Q}$ denote the field of algebraic numbers.

\begin{theorem}\label{theorem: classification or rank 2}
Let $F \in \mathcal{F}(G,r,s)$, where $r+s-1 = 2$ and $G$ is one of the Galois groups of \Cref{prop:Galois groups rank 2}. Define 
\begin{equation} \label{eq:Galoisgroups}
    \mathcal G =\{D_6, S_4, A_4\times C_2,S_4\times C_2\}.
\end{equation}
   Then the following hold.
    \begin{enumerate}
        \item If $G \cong C_3$, $C_6$, or $S_3$ (so $F/\Q$ is Galois by \Cref{rmk: rank 2 Galois number fields}), or $G\cong S_3\times C_3$, then the log unit lattice of $F$ is hexagonal.

        \item All the unit shapes in $\Omega(D_4,2,1)$ and $\Omega_{{\non-CM}}(G,0,3)$, with $G$ isomorphic to one of the groups in $\mathcal G$, are transcendental and lie on the boundary of $\mathcal{S}_2$. Moreover, for any real quadratic field $F_2$ (resp.\ complex cubic field $F_3$), there are only finitely many fields in $\Omega(D_4,2,1)$ (resp.\ $\Omega_{{\non-CM}}(G,0,3)$) containing $F_2$ (resp.\ $F_3$) whose unit shape lies on the lower arc of $\mathcal{S}_2$.

        \item  Assuming \Cref{conj: algebraic independence of logs}, all unit shapes in $\Omega(S_3,3,0)$ and $\Omega_{\CM}(G,0,3)$, with $G$ isomorphic to one of the groups in $\mathcal G$, lie in the interior of $\mathcal{S}_2$. Moreover, for every complex number $z=x+iy\in\mathcal S_2$ representing one of these shapes, no proper Zariski-closed subset of $\C^2$ defined over~$\overline{\Q}$ contains the point $(x,y)$. In particular, the complex number $x+iy$ is transcendental.    
       
    \end{enumerate}
\end{theorem}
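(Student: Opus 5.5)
The proof splits into the three parts of the statement. Each part is handled by finding a suitable subfield $K\subset F$, or an automorphism, that forces structure on $\Lambda_F$ through the representation-theoretic decomposition of \Cref{subsection: representation theory section}.

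\textbf{Part 1.} The plan is to exhibit an order-$3$ isometry of $\Lambda_F$ with no nonzero fixed vector. A rank-$2$ lattice admitting such an automorphism is hexagonal (Terr; Tran--Tian).
\begin{itemize}
\item For $G\cong C_3,C_6,S_3$, the field $F$ is Galois. An element of order $3$ in $\Gal(F/\Q)$ permutes the embeddings, hence acts orthogonally on $\Lambda_F$. Its fixed vectors lie in $\Log_F$ of units of the fixed field. That fixed field is $\Q$ or an imaginary quadratic field, whose unit rank is $0$, so no nonzero vector is fixed.
\item For $G\cong S_3\times C_3$, \Cref{prop:Galois groups rank 2}(iv)(a) says $F/F_2$ is Galois with group $C_3$. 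Here $F_2$ must be imaginary quadratic, since $F$ is totally imaginary of degree $6$ over a quadratic with $[F:F_2]=3$ odd. The same argument then applies with $K=F_2$: $\Lambda^t_{F/F_2}$ is commensurable with $\Lambda_{F_2}=0$ by \Cref{prop: subfield in trivial part}.
\end{itemize}

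\textbf{Part 2.} Take $K=F_2$, the real quadratic subfield in the $D_4$ case, and $K=F_3$, the complex cubic subfield in the non-CM sextic case; each has unit rank $1$. Then $F/K$ is quadratic, hence Galois, and not CM. By \Cref{prop: subfield in trivial part}, $\Lambda^t_{F/K}$ has rank $1$ and contains $\Lambda_K$ with finite index, so $\Lambda^r_{F/K}$ has rank $1$ as well. \Cref{lem:vtvrortho} makes these two orthogonal, so $\Lambda_F$ contains the orthogonal sublattice $\Z v_t\oplus\Z v_r$ with finite index. For a quadratic extension, $2\Lambda_F\subseteq \Z v_t\oplus\Z v_r$, since $2v=(v+\rho v)+(v-\rho v)$.

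It follows that $\Lambda_F$ is either orthogonal or has a basis $\{v_t,(v_t+v_r)/2\}$ (or the symmetric one), and in the latter case its reduced shape lies on the right boundary $x=1/2$ or on the lower arc. In every case the shape is determined by the ratio $\|v_r\|^2/\|v_t\|^2$. This is a ratio of sums of squares of logarithms of absolute values of algebraic numbers, so the shape is $i\sqrt{q}$, or a Möbius image thereof, with $q$ such a ratio.

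Transcendence then follows from Baker's theorem applied to the linear forms in logarithms that compute $q$. I expect this to be the main obstacle: $q$ is a ratio of quadratic forms in logs, not a linear form, so Baker alone does not suffice. The first thing to try is to use the explicit structure. The vector $v_t$ is a multiple of $\Log_F(\eta)$ for a unit $\eta\in K$, with coordinates like $(a,-a,0)$ up to scaling in a suitable basis, and $v_r$ similarly has a one-parameter form. This should reduce $q$ to a ratio $\log|\alpha|/\log|\beta|$ times a rational number. The Gelfond--Schneider/Baker results then make that ratio transcendental unless it is rational, and rationality is excluded by multiplicative independence of the units involved.

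Lying on the lower arc forces an algebraic condition of the form $\|v_r\|=\|v_t\|$ or a fixed rational multiple of it. For fixed $K$, the unit $v_t$ is fixed up to finite index, so this pins $\|v_r\|$ to finitely many values. By Northcott-type finiteness (bounded height of relative units in quadratic extensions of $K$), this leaves only finitely many $F$.

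\textbf{Part 3.} For $\Omega(S_3,3,0)$ and CM fields (use \Cref{prop: CMsublattice vs lattice} to reduce to the totally real cubic subfield $K$), the Gram matrix entries are quadratic expressions in $\log|\sigma_i(\epsilon_j)|$ for fundamental units $\epsilon_1,\epsilon_2$. Among these $\log$s, the only relations are the trace relations, leaving $4$ logarithms of multiplicatively independent algebraic numbers. Assuming \Cref{conj: algebraic independence of logs}, these four are algebraically independent over $\overline{\Q}$, and the shape point $(x,y)$ is the image of these four independent logs under the explicit rational map $\text{Gram}\mapsto(x,y)$. That map is dominant onto $\C^2$, so $(x,y)$ lies in no proper $\overline{\Q}$-Zariski-closed set. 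Interiority follows because the boundary is defined by algebraic equations over $\Q$. The remaining checks are the multiplicative independence of the conjugates (the Galois module structure, where the $S_3$ action gives rank $2$ with no extra relations) and the dominance of the map.
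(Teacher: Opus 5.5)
\textbf{Gap: the transcendence step of Part~2.} Your Part~1 matches the paper. So does the boundary half of Part~2, which is \Cref{prop: unit structure} and \Cref{cor: reduced basis}. Part~3 also works along the paper's lines: Weak Schanuel plus a dominant Gram map. You use the four independent logarithms of conjugates of a fundamental system of the cubic directly, where the paper uses a weak Minkowski unit of $\tilde F_3$ and the map $\varphi$; that is a fine simplification. The problem is transcendence in Part~2.

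By \Cref{lem:embedding}, $v_t=(2a,-a,-a)^T$ and $v_r=(0,b,-b)^T$, not $(a,-a,0)$ for $v_t$. Hence $\|v_r\|^2/\|v_t\|^2=b^2/(3a^2)$, with $a,b$ rational multiples of $\log|\rho_2(u_t)|$ and $\log|\rho_2(u_r)|$. Gelfond--Schneider then needs these two logarithms to be $\Q$-linearly independent, i.e.\ $|\rho_2(u_t)|$ and $|\rho_2(u_r)|$ multiplicatively independent. This does not follow from multiplicative independence of $u_t,u_r$ in $O_F^\times$. For complex $\rho$ the map $u\mapsto|\rho(u)|$ is not injective modulo torsion: for instance $|\rho_1(u_r)|=1$ although $u_r$ has infinite order. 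For the totally imaginary sextics every embedding is complex. Your argument does go through in the $D_4$ case, where $\rho_2$ is real.

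The missing idea is \Cref{prop: Qbarindependence}. Pass to $\tilde F$ and write $|\rho(u)|^2=\rho(u^{1+\tau})$, with $\tau$ acting as complex conjugation. Since $u_t^{2(1+\tau)}\in\mu\,\tilde K$, the vector $\Log_{\tilde F}(u_t^{1+\tau})$ lies in $\Q\otimes\Lambda_{\tilde K}$. On the other hand, $u_r^{1+\tau}$ is a relative unit of $\tilde F/\tilde K$ by \Cref{lem: relative in the Galois closure}. By \Cref{lem:vtvrortho} the two log vectors are therefore nonzero and orthogonal. Hence $|\rho(u_t)|^2$ and $|\rho(u_r)|^2$ are multiplicatively independent, and Baker/Gelfond--Schneider now applies.

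\textbf{Two further points.} First, the lower arc is not an equality condition. It is the range $\tfrac13\le\|v_r\|^2/\|v_t\|^2\le 3$, which says $v$ is a shortest vector. Your Northcott idea still works: $\|v_r\|^2\le 3\|\Log_F(\epsilon_t)\|^2$ bounds the height of $u_r$ in terms of $K$ alone, and $F=K(u_r)$. This is a valid alternative to the paper's route, which bounds the regulator and then uses finiteness of non-CM fields with bounded regulator.

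Second, your ``$S_3$ action'' independence count in Part~3 fails for $G\cong A_4\times C_2=C_2\wr C_3$. There the cubic subfield is cyclic and complex conjugation is central. So every such field is CM over a cyclic cubic. By \Cref{prop: CMsublattice vs lattice} and Remak's bound, $[\Lambda_F]$ is then $e^{\pi i/3}$ or $i\sqrt3$, which is algebraic and on the boundary. This defect lies in the statement itself. The paper's \Cref{prop: shape_space_totally_real} also assumes $F_3$ is non-Galois, so part~3 is established only when the cubic subfield is an $S_3$-cubic.
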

\begin{proof}
    Part 1 is \Cref{prop: hexagonal lattices}, part 2 is obtained in \Cref{cor: unit shapes in the boundary} and \Cref{theorem: trascendental point}, and  part 3 is \Cref{prop: shape_space_totally_real}. 
\end{proof}

Note that \Cref{theorem: classification or rank 2} misses the unit shapes of quartic, quintic, and sextic number fields with no non-trivial subfields, and totally imaginary sextic fields with Galois groups \(S_3^2\) and \(S_3^2\rtimes C_2\). To examine these remaining cases, we used PARI/GP \cite{PARI2} to compute unit shapes from LMFDB data for all families of number fields listed in the following conjecture. To obtain computational evidence for transcendentality, we applied the PSLQ algorithm as implemented in \texttt{PARI/GP}. For the number fields in the LMFDB data set considered here, no polynomial relation of degree at most 6, with all coefficients bounded in absolute value by $10^8$, was detected. This observation motivates the following conjecture.

\begin{conjecture}\label{conj:remaining Galois groups}
Let $F \in \mathcal{F}(G,r,s)$, where $r+s-1 = 2$ and $G$ is one of the Galois groups of \Cref{prop:Galois groups rank 2}. Suppose $F$ satisfies one of the following conditions:
\begin{enumerate} \itemsep 0pt
    \item[(i)] $F$ has no non-trivial subfields and $F$ has signature $(r,s)=(2,1),(1,2)$, or $(0,3)$;
    \item[(ii)] $G \cong S_3^2$ or $S_3^2\rtimes C_2$ (so $(r,s) = (0,3)$ by \Cref{prop:Galois groups rank 2}).
\end{enumerate}  
Then the unit shape of $F$ is in the interior of $\mathcal{S}_2$ and is transcendental.
\end{conjecture}

\begin{remark}\label{remark: comparison of results}
     As mentioned in \Cref{subsection: previous work}, parts of \Cref{theorem: classification or rank 2} have already been observed in prior literature. 
  
     The fact that the unit shapes in $\Omega(C_3,3,0)$ and $\Omega(C_6,0,3)$ are hexagonal was already established in \cite{tran2023size}, \cite{TranTian2018SizeFunction}, \cite{TPS2021}, \cite{terr1997}. In fact, our proof of part 2 of \Cref{theorem: classification or rank 2} follows a similar strategy to the one given in \cite{RNT} for the $D_4$ case. 
     Carrying out an analogous computation using the techniques of \cite{HHV} in the case of totally imaginary $D_3$ ($\cong S_3$) shows that the shapes of these rank 2 unit lattices are hexagonal.
    
     It is interesting to compare part 3 of \Cref{theorem: classification or rank 2} with the results of \cite{RNT} and a particular instance of one of the results in \cite{HHV}. In \cite{RNT}, it was already shown that the unit shapes in $\Omega(D_4,2,1)$ lie on the boundary of $\mathcal{S}_2$ (i.e.\ a curve) and are transcendental \cite{RNT}. In \cite{HHV}, the authors show that any unit shape in $\Omega(D_5,1,2)$ can be represented by a complex number $x+iy$ on the curve
     \[
     \left(x+\frac{1}{2}\right)^2+\left(y-\frac{1}{2\sqrt{3}}\right)^2=\left(\frac{2}{3}\right)^2.
     \]
     Note the disparity of these cases with the behavior of the unit shapes in part 3 of \Cref{theorem: classification or rank 2} which, assuming \Cref{conj: algebraic independence of logs}, are not contained in any $\overline{\Q}$-algebraic subset of~$\C^2$. 
\end{remark}

The remainder of this subsection is devoted to the proof of part 1 of \Cref{theorem: classification or rank 2}; the other parts will be proved in subsequent subsections. As mentioned in \Cref{remark: comparison of results}, this result is already known for the cases where $F/\Q$ is Galois and has unit rank 2, and $\Gal(F/\Q)$ is isomorphic to $C_3$, $C_6$, or $S_3$.

\begin{proposition}[\Cref{theorem: classification or rank 2} part 1]\label{prop: hexagonal lattices}
    Let $F \in \mathcal{F}(G,r,s)$ with $r+s-1 = 2$, where either $G \cong C_3$, $C_6$, or $S_3$ (so $F$ is Galois by \Cref{remark: comparison of results}), or $G \cong S_3 \times C_3$. Then $\Lambda_F$ is the hexagonal lattice.
\end{proposition}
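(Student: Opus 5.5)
The plan is to use the fact, recalled in \Cref{subsection: previous work} from \cite{terr1997, TranTian2018SizeFunction}, that a rank~$2$ lattice admitting an isometric automorphism of order~$3$ is hexagonal. The order~$3$ automorphism will come from a Galois element acting orthogonally by permuting embeddings. For such a lattice $\Lambda$ with automorphism $T$, the relation $T^2+T+1=0$ holds on $\Lambda\otimes\R$, because the only fixed vector of $T$ in a $2$-dimensional space is $0$ once $T\neq 1$. Choosing a shortest vector $v$, the vectors $v$ and $Tv$ have equal length, and the angle between them is $2\pi/3$. They span $\Lambda$ because $\Z[T]v\cong\Z[\zeta_3]$ sits inside $\Lambda$ with the right index. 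This is the known argument, so the work is to produce an order~$3$ element $\gamma$ acting on $\Lambda_F$ isometrically and non-trivially.

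\emph{Galois case} ($G\cong C_3,C_6,S_3$, with $F$ Galois by \Cref{rmk: rank 2 Galois number fields}). Here $\Gal(F/\Q)$ acts on $\Lambda_F$ by permuting the archimedean places, hence orthogonally, as in \Cref{subsection: representation theory section}. Take $\gamma$ of order~$3$.
\begin{itemize}
\item If $F$ is cubic, $\gamma$ cyclically permutes the three real embeddings and fixes no nonzero trace-zero vector, so it acts non-trivially.
\item If $F$ is a sextic of signature $(0,3)$, $\gamma$ permutes the three conjugate pairs cyclically. The reason is that the fixed field of $\langle\gamma\rangle$ is a quadratic field $K$, which is imaginary, and $\gamma$ generates $\Gal(F/K)$. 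Since $F/K$ is not CM (its degree is $3$), \Cref{prop: subfield in trivial part} shows that $\gamma$ acts non-trivially, because $\Lambda_F\neq\Lambda_{F/K}^t$. Alternatively, $\Lambda_{F/K}^t$ is commensurable with $\Lambda_K=0$.
\end{itemize}

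\emph{Case $G\cong S_3\times C_3$.} Now $F$ is non-Galois, so I would work inside $\tilde F$. By \Cref{prop:Galois groups rank 2}(iv)(a), $F$ has an imaginary quadratic subfield $F_2$ with $F/F_2$ Galois, and $\Gal(F/F_2)\cong C_3$ since the degree is $3$. Apply \Cref{prop: subfield in trivial part} to the extension $F/F_2$. It is Galois, and $\Gal(F/F_2)$ acts on the embeddings of $F$ extending a fixed embedding of $F_2$, permuting the three complex places of $F$ (each pair $\tau,\bar\tau$ lies over the single place of $F_2$). This action is isometric on $\Lambda_F$. Because $\Lambda_{F_2}=0$ and $F/F_2$ is not CM, the generator acts non-trivially. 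Note that the argument is then identical to the sextic Galois case, which unifies the proof.

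The main obstacle is checking that the $\Gal(F/F_2)$-action on $\Lambda_F$ is genuinely by isometries when $F/\Q$ is not Galois. That is, $\gamma\in\Gal(F/F_2)$ must induce a permutation of the coordinates of $\Log_F$, meaning $|\tau(\gamma x)|=|\tau'(x)|$ for some place $\tau'$. This holds because $\tau\circ\gamma$ is again an embedding of $F$, and the factor~$2$ weights are uniform since all places are complex. Once this is settled, the hexagonal lemma finishes the proof.
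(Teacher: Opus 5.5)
Your proposal is correct and follows essentially the same route as the paper. It reduces to exhibiting an order-$3$ automorphism of $\Lambda_F$ via the Terr/Tran--Tian criterion, takes that automorphism from the order-$3$ element of $G$ in the Galois cases and from $\Gal(F/F_2)$ in the $S_3\times C_3$ case, and uses \Cref{prop: subfield in trivial part} (a degree-$3$ extension is not CM) to see that it acts non-trivially. The isometry check you flag as the main obstacle is already covered by the general setup of \Cref{subsection: representation theory section}, which treats any Galois extension $F/K$ without requiring $F/\Q$ to be Galois.
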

\begin{proof}
    
    By \cite[Proposition 2.1]{TranTian2018SizeFunction}, it suffices to prove that $\Lambda_F$ admits an automorphism of order 3 (see also \cite[Table 6.2]{terr1997}). If $F/\Q$ is Galois, then for all three choices, $G$ admits an automorphism of order 3 that does not act trivially on $\Lambda_F$ by \Cref{prop: subfield in trivial part}. If $G \cong S_3 \times C_3$, then $F$ is a sextic field that contains a unique quadratic subfield $F_2\subseteq F$ such that $F/F_2$ is Galois. It follows, just as before, that $\Gal(F/F_2)$ has an automorphism of order 3 which does not act trivially on $\Lambda_F$ by  \Cref{prop: subfield in trivial part}. 
    \end{proof}

\subsection{Transcendental unit shapes on the boundary}
We now proceed with the proof of part 2 of \Cref{theorem: classification or rank 2} which will be divided into two parts. In \Cref{cor: unit shapes in the boundary}, we prove that $\Omega(D_4,2,1)$ and $\Omega_{\non-CM}(G,0,3)$ for $G\in \mathcal G$ as in \eqref{eq:Galoisgroups}, are contained in the boundary of $\mathcal{S}_2$. In \Cref{theorem: trascendental point}, we establish that these sets consist of transcendental numbers. In addition, \Cref{cor:unit-basis} generalizes Nakamula's description \cite[Corollary 2]{nakamula1980group} of the fundamental units of fields in $\mathcal{F}(D_4,2,1)$ and $\mathcal{F}_{\non-CM}(D_6,0,3)$ to the families $\mathcal{F}_{\non-CM}(G,0,3)$ with $G\in\mathcal{G}$.

As a consequence of the above results, \Cref{cor: characterization of orthogonality} gives necessary and sufficient conditions for a lattice whose unit shape is transcendental and lies on the boundary of $\mathcal{S}_2$ to be orthogonal. This will be used in \Cref{sec:geomD6}.

\subsubsection{Fundamental units and successive minima}

Our investigation will require explicit bases for the log unit lattices of fields in the families $\mathcal{F}(D_4,2,1)$ and $\mathcal{F}_{\non-CM}(G,0,3)$, with $G\in \mathcal{G}$ as given in \eqref{eq:Galoisgroups}. In the process, we also determine a pair of fundamental units for these fields. 

\begin{proposition}\label{prop: boundarytype}
    Every $F\in \mathcal{F}{(D_4,2,1)} \cup \mathcal{F}_{\non-CM}(G,0,3)$, with $G\in \mathcal{G}$ as in \eqref{eq:Galoisgroups}, has a subfield $K$ of index $2$. Moreover, $\rk(\Lambda_K)=\rk(\Lambda_{F/K}^t)=\rk(\Lambda_{F/K}^r)=1$, where $\Lambda_{F/K}^t$ and $\Lambda_{F/K}^r$ are defined in \eqref{eq:orthorep}.
\end{proposition}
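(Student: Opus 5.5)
Two things are needed: first exhibit the index~$2$ subfield $K$ using the group theory, then compute the three ranks by counting real and complex places together with the orthogonal decomposition $V_F=V_{F/K}^t\oplus V_{F/K}^r$ of \Cref{subsection: representation theory section}.

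\emph{Existence of $K$.} For $F\in\mathcal{F}(D_4,2,1)$, the quartic $F$ corresponds to a non-normal order~$2$ subgroup $\langle\tau\rangle$ of $D_4$. The subgroup $\langle\tau\rangle$ lies in a Klein four subgroup $V\le D_4$, and $V$ has index $2$ in $D_4$. Its fixed field is a quadratic subfield $K\subset F$ with $[F:K]=2$. For $F\in\mathcal{F}_{\non-CM}(G,0,3)$ with $G\in\mathcal{G}$, we are in case (iv)(b) of \Cref{prop:Galois groups rank 2}: the groups in $\mathcal G$ are exactly those arising from sextics with a cubic subfield, since $C_6$ and $S_3$ give Galois fields, which are excluded. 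So we take $K=F_3$, a cubic subfield, and $[F:K]=2$ is automatic. For $D_6$ the paper already notes that $F_3$ is unique. For $S_4$, $A_4\times C_2$ and $S_4\times C_2$ the group-theoretic description via $C_2\wr S_3$ in the proof of \Cref{prop:Galois groups rank 2} gives a cubic subfield, which is all we need.

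\emph{Rank of $\Lambda_K$.} For $D_4$, $F$ has signature $(2,1)$. Its quadratic subfield $K$ is real, because $F$ has real embeddings, so $\rk\Lambda_K=1$. In the sextic case $F$ is totally imaginary and, by assumption, not CM. If $F_3$ were totally real, then $F/F_3$ would be a totally imaginary quadratic extension of a totally real field, i.e.\ $F$ would be CM, a contradiction. Hence $F_3$ has signature $(1,1)$ and $\rk\Lambda_K=1$.

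\emph{Ranks of $\Lambda^t_{F/K}$ and $\Lambda^r_{F/K}$.} The extension $F/K$ is quadratic, hence Galois, so \Cref{prop: subfield in trivial part} applies. It gives $[\Lambda^t_{F/K}:\Lambda_K]<\infty$, hence $\rk\Lambda^t_{F/K}=\rk\Lambda_K=1$. Since $F/K$ is not a CM extension, the same proposition gives $\Lambda^t_{F/K}\neq\Lambda_F$. For $D_4$, $K$ has a real place splitting into two complex places? No: the real places of $K$ extend to two real places and to one complex place of $F$, so $F/K$ is not CM. In the sextic case $F/F_3$ is not CM because $F_3$ is not totally real. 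Thus $V^r_{F/K}=(V^t_{F/K})^\perp$ has dimension $2-1=1$. To pass from the rational space to the lattice, note that $\Lambda^r_{F/K}$ contains $(1-\gamma)\Lambda_F$, where $\gamma$ generates $\Gal(F/K)$. This holds because $1+\gamma$ kills $(1-\gamma)v$, so the norm-kernel description in \Cref{lem:vtvrortho} applies, or equivalently because $(1-\gamma)v$ is orthogonal to the fixed vectors. Moreover $(1-\gamma)\Lambda_F$ spans $V^r_{F/K}$, since $1-\gamma$ acts as multiplication by $2$ on $V^r_{F/K}$. Therefore $\rk\Lambda^r_{F/K}=1$.

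The main obstacle is the first step in the non-$D_6$ sextic cases: checking that every $F$ in $\mathcal{F}(G,0,3)$ for $G\in\{S_4,A_4\times C_2,S_4\times C_2\}$ really contains a cubic subfield rather than merely a quadratic one. I would settle this by reading off the transitive sextic embeddings 6T6, 6T8 and 6T11 as block systems with blocks of size $2$, which correspond to cubic subfields.
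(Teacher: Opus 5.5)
Your proof is correct and follows the paper's argument: take $K$ to be the real quadratic subfield (resp.\ the cubic subfield, which is complex because $F$ is not CM), use \Cref{prop: subfield in trivial part} to get $\rk\Lambda^t_{F/K}=\rk\Lambda_K=1$, and conclude $\rk\Lambda^r_{F/K}=1$ from $\rk\Lambda_F=2$. Beyond the paper, you treat the $D_4$ case by this same argument instead of citing \cite{RNT}, and you justify the existence of $K$, which the paper simply asserts; your block-system check handles that, though note that (iv)(b) of \Cref{prop:Galois groups rank 2} only gives ``cubic subfield $\Rightarrow$ $G$ in the list'', and the converse you need follows more quickly from the contrapositives of (iv)(a) and (iv)(c) for $S_4$, $A_4\times C_2$, $S_4\times C_2$, and for $D_6$ from the fact that $\langle\sigma\tau\rangle$ lies in the index-$3$ subgroup $\langle\sigma\tau,\sigma^3\rangle$.
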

\begin{proof}
   Every field $F \in \mathcal{F}{(D_4,2,1)}$ is quartic and contains a real quadratic subfield $K = F_2$, so $\rk(\Lambda_K) = 1$. Furthermore, \cite{RNT} establishes that $\rk(\Lambda_{F/K}^t)=\rk(\Lambda_{F/K}^r)=1$.
   
    For each of the groups $G \in \mathcal{G}$, any field $F \in \mathcal{F}_{\non-CM}(G,0,3)$ contains a cubic subfield $K = F_3$ which is not totally real since $F$ is not a CM field. \Cref{prop: subfield in trivial part} now implies that 
    $\rk(\Lambda_K)=\rk(\Lambda_{F/K}^t)=1$, which forces $\rk(\Lambda_{F/K}^r)=1$. \end{proof}

For the remainder of this section, let $F/K$ be as in \Cref{prop: boundarytype}, and write $H\coloneq\Gal(F/K) = \langle \gamma \rangle \cong C_2$. Then $\Lambda_{F/K}^t$ and $\Lambda_{F/K}^r$ have rank~1. 
We introduce the following three vectors $v_t, v_r, v$ that will play a crucial role in establishing that the unit shape of $F$ lies on the boundary of $\mathcal{S}_2$.
\begin{equation} \label{eq:vectors}
    \Lambda_{F/K}^t = \langle v_t \rangle, \qquad \Lambda_{F/K}^r = \langle v_r \rangle, \qquad v = \frac{v_t+v_r}{2}  \in \Q\otimes_\Z\Lambda_F. 
\end{equation}
Specifically, $v_t$ and $v_r$ are generators of the respective abelian groups $\Lambda_{F/K}^t$ and $\Lambda_{F/K}^r$. Note that $v$ need not belong to $\Lambda_F$.

\begin{remark}\label{remark: definition of vr and vt}
     From the definition of $v_t$ and $v_r$ it follows that the following maps  are isomorphisms of $\Z[H]$-modules:
    \begin{align*}
         \Z[\gamma]/&(\gamma-1)\rightarrow \Lambda_F^t,
         &&\text{and}&\Z[\gamma]/&(\gamma+1)\rightarrow\Lambda_F^r,\\
         &1\mapsto v_t,&&&&1\mapsto v_r.
    \end{align*}
   For any $w\in\Lambda_F$, we have $(1+\gamma)w\in\Lambda^t$ and $(1-\gamma)w\in\Lambda^r$. In fact, for any $u\in E_F$, we have $u^{1+\gamma}=\N_{F/K}(u)$.  
\end{remark}

We can now describe the possible $\Z[H]$-module structures of $\Lambda_F$.

\begin{proposition}\label{prop: unit structure}
       Let $F/K$ be as in \Cref{prop: boundarytype}, and let $v_t, v_r, v$ be as in \eqref{eq:vectors}. For brevity, write $\Lambda=\Lambda_F$, $\Lambda^t=\Lambda_{F/K}^t$ and $\Lambda^r=\Lambda_{F/K}^r$. Define $\Lambda'= \Lambda^t\oplus \Lambda^r$ and $A=\mathbb Z[\gamma]/(\gamma^2-1)$. Then the following hold.
        
        \begin{enumerate} 
        \item[(i)] $\Lambda=\Lambda'$ or 
        \item[(ii)] $\Lambda\cong A$ which is equivalent to $v \in\Lambda$. Furthermore, $[\Lambda:\Lambda']=2$ and any pre-image $u\in \Log_{F}^{-1}(\{v\})$ is a Minkowski unit of $F$.
        \end{enumerate}
\end{proposition}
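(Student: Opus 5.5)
The plan is to sandwich $\Lambda$ between $\Lambda'$ and $\frac12\Lambda'$ and then analyse the index. First, $\Lambda'\subseteq\Lambda$ by definition, since $\Lambda^t=\Lambda\cap V^t$ and $\Lambda^r=\Lambda\cap V^r$.

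Conversely, take any $w\in\Lambda$. By \Cref{remark: definition of vr and vt}, $(1+\gamma)w\in\Lambda^t$ and $(1-\gamma)w\in\Lambda^r$. Since $2w=(1+\gamma)w+(1-\gamma)w$, this gives $2\Lambda\subseteq\Lambda'$. Hence $\Lambda'\subseteq\Lambda\subseteq\tfrac12\Lambda'$, so $[\Lambda:\Lambda']$ divides $4$. The quotient $\Lambda/\Lambda'$ is a subgroup of $\tfrac12\Lambda'/\Lambda'\cong(\Z/2)^2$, whose nonzero classes are represented by $\tfrac12v_t$, $\tfrac12v_r$ and $v=\tfrac12(v_t+v_r)$.

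Next I rule out $\tfrac12v_t$ and $\tfrac12v_r$ lying in $\Lambda$. If $\tfrac12v_t\in\Lambda$, then it lies in $\Lambda\cap V^t=\Lambda^t=\langle v_t\rangle$, contradicting that $v_t$ generates $\Lambda^t$. The same argument works for $\tfrac12v_r$. Since any subgroup of $(\Z/2)^2$ of order $4$ contains $\tfrac12v_t$, the quotient $\Lambda/\Lambda'$ is either trivial or equal to $\{0,v\}$. This gives the dichotomy: either $\Lambda=\Lambda'$, or $v\in\Lambda$, in which case $\Lambda=\Lambda'+\Z v$ and $[\Lambda:\Lambda']=2$.

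For the module structure in case (ii), I use that $\gamma v=\tfrac12(v_t-v_r)$. Then $v+\gamma v=v_t$ and $v-\gamma v=v_r$, so $v$ and $\gamma v$ generate $\Lambda'+\Z v=\Lambda$. They are $\Z$-independent because $v_t$ and $v_r$ are. Therefore $1\mapsto v$ defines a surjective $\Z[H]$-map $A\to\Lambda$ between free $\Z$-modules of rank $2$, and it is an isomorphism.

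For the converse, suppose $\Lambda\cong A$ with generator $w$, so that $\{w,\gamma w\}$ is a $\Z$-basis. Then $(1+\gamma)w$ and $(1-\gamma)w$ span an index-$2$ sublattice of $\Lambda$ contained in $\Lambda'$. So $\Lambda\ne\Lambda'$, and we are forced into case (ii), which gives $v\in\Lambda$. Alternatively, note that in case (i) $\Lambda\cong\Z[\gamma]/(\gamma-1)\oplus\Z[\gamma]/(\gamma+1)$ is not cyclic as a $\Z[H]$-module, since its reduction mod $2$ has trivial $\gamma$-action while $A/2A$ does not.

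Finally, for the Minkowski unit claim, let $u$ be a preimage of $v$. Its conjugate $\gamma(u)$ lies in $F$ (because $\gamma\in\Gal(F/K)$), and $\Log_F(\gamma(u))=\gamma v$. So $C_u\cap F$ contains $u$ and $\gamma(u)$. Their images $v$ and $\gamma v$ span $\Lambda_F$, so they generate $E_F$ (using $\Log_F$ with kernel $\mu_F$). Any other conjugates of $u$ that lie in $F$ are units of $F$, so they do not enlarge the group beyond $E_F$, and $u$ is a strong Minkowski unit.

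I expect no serious obstacle here. The only delicate point is the step excluding $\tfrac12v_t$ and $\tfrac12v_r$, which depends on $\Lambda^t$ being defined as the full intersection $\Lambda\cap V^t$ rather than as $\Lambda_K$.
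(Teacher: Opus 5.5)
Your proof is correct. It differs from the paper's in how the index $[\Lambda:\Lambda']$ is controlled.

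The paper defines $\psi\colon\Lambda\to\Lambda^t/2\Lambda^t$ (relative norm followed by reduction) and shows $\ker\psi=\Lambda'$, which gives $[\Lambda:\Lambda']\le 2$. Then, for $w\in\Lambda\setminus\Lambda'$, it writes $w=pv_t+qv_r$ and argues $p,q\in\frac12\Z\setminus\Z$, hence $w-v\in\Lambda'$.

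You instead get $2\Lambda\subseteq\Lambda'$ from $2w=(1+\gamma)w+(1-\gamma)w$ and embed $\Lambda/\Lambda'$ into $\frac12\Lambda'/\Lambda'\cong(\Z/2)^2$. Saturation, $\Lambda\cap V^t=\Z v_t$ and $\Lambda\cap V^r=\Z v_r$, then rules out the classes of $\frac12 v_t$ and $\frac12 v_r$. This is more symmetric, and it makes explicit the saturation step the paper uses only tacitly: its ``similarly'' for $q\notin\Z$ really rests on $\Lambda\cap V^t=\Z v_t$.

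You also supply two things the paper's proof omits. First, the implication $\Lambda\cong A\Rightarrow v\in\Lambda$; the paper only shows that index $2$ forces both $\Lambda\cong A$ and $v\in\Lambda$. Second, the check that a preimage $u$ of $v$ is a Minkowski unit, via $u,\gamma(u)\in C_u\cap F$ with logs spanning $\Lambda_F$.

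One small slip: your first argument for $\Lambda\cong A\Rightarrow v\in\Lambda$ is a non sequitur as written. Knowing that $\langle(1+\gamma)w,(1-\gamma)w\rangle$ is an index-$2$ sublattice contained in $\Lambda'$ only gives $[\Lambda:\Lambda']\mid 2$, not $\Lambda\neq\Lambda'$. You also need the reverse containment, which is immediate. The element $aw+b\gamma w$ is $\gamma$-fixed iff $a=b$ and $\gamma$-antifixed iff $a=-b$. Hence $\Lambda^t=\Z(1+\gamma)w$ and $\Lambda^r=\Z(1-\gamma)w$, so $\Lambda'$ equals that sublattice. Your alternative mod-$2$ argument is correct as stated and suffices on its own: $\gamma$ acts trivially on $\Lambda'/2\Lambda'$ but not on $A/2A$.
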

\begin{proof}
 As in the proof of \Cref{lem:vtvrortho}, we view the norm map $\N_{F/K} = \gamma+1$ as an element in $\Z[H]$. Let $\psi:\Lambda\rightarrow \Lambda^t/2\Lambda^t$ be the following composition of maps, where the second map is the natural projection:
 \[
 \psi:\Lambda\xrightarrow{\N_{F/K}}\Lambda^t\xrightarrow{\mathrm{proj}}\Lambda^t/2\Lambda^t.
 \]
 We claim that $\ker(\psi)=\Lambda'.$ The inclusion $\Lambda'\subseteq \ker(\psi)$ is not hard to verify, so we only prove that $\ker(\psi)\subseteq \Lambda'$. Let $w\in \ker(\psi)$. Then $\N_{F/K}\,w=2w'$ for some $w'\in \Lambda^t$. It follows that $w-w'\in \ker(\N_{F/K})=\Lambda^r$; hence, by \Cref{lem:vtvrortho}, $w\in \Lambda'$. We conclude that $[\Lambda:\Lambda']=\text{1 or 2}$.
 
If $[\Lambda:\Lambda']=1$, then the proof of part (i) is complete, so assume that $[\Lambda:\Lambda']=2$. We show that $v \in \Lambda \setminus \Lambda'$. To that end, let $w \in \Lambda \setminus \Lambda'$. Then $\N_{F/K}\,w \in \Lambda^t$ and $(1-\gamma)w \in \Lambda^r$ by \Cref{remark: definition of vr and vt}. Write 
\[
w=pv_t+qv_r,\quad p,q\in \Q.
\]
Since $w \notin \Lambda' = \ker(\psi)$ and $\N_{F/K} \, v_t = 2v_t$, we have  $2pv_t = \N_{F/K}w \in \Lambda^t \setminus 2\Lambda^t$, so $p \in \frac{1}{2}\Z \setminus \Z$. Similarly, since $(1-\gamma)\,v_r = 2v_r$, we have $2qv_r = (1-\gamma)\,w \in \Lambda^r$, so $q \in \frac{1}{2}\Z \setminus \Z$. It follows that 
$w-v=(p-1/2)v_t+(q-1/2)v_r \in \Lambda'$, and hence $v \in \Lambda \setminus \Lambda'$.

 A straightforward computation shows that $\mathrm{span}_A(\{v\})$ contains $\Lambda'$ as a proper submodule, so $\Lambda=\mathrm{span}_A(\{v\})$. It follows that the homomorphism of $A$-modules
 \[
 A\rightarrow \Lambda,\quad \alpha\mapsto \alpha v
 \]
 is surjective. Since $A$ and $\Lambda$ are free abelian groups of the same rank, any surjection from~$A$ to~$\Lambda$ is an isomorphism, so $\Lambda \cong A$ as $A$-modules. 

 To complete part (ii), observe that we already proved that if $[\Lambda : \Lambda'] = 2$, then $\Lambda \cong A$ and $v \in \Lambda$. For the converse, note that if $v \in \Lambda$, then $\mathrm{span}_A(\{v\})$ contains $\Lambda'$ as a proper submodule, so $[\Lambda:\Lambda']=2$. Part (i) now forces $\Lambda\cong A$ which yields the result.
\end{proof}

The algebraic structures of \Cref{prop: unit structure} are intimately related to the geometry of the lattice $\Lambda_F$. In fact, they almost determine its shortest vectors, as the following corollary shows.

\begin{corollary}\label{cor: reduced basis}
    Let $A$ and $\Lambda$ be as in  \Cref{prop: unit structure} and $v$, $v_t$, $v_r$~as given in~\eqref{eq:vectors}. Then one of the following holds:
    \begin{enumerate}
        \item[(i)] $\Lambda$ is orthogonal and the set         $\{v_t,v_r\}$ is an orthogonal basis of $\Lambda$         that achieves the successive minima of $\Lambda$.  
        \item[(ii)] $\Lambda\cong A$, $v\in\Lambda$, and one of the sets $\{v_t,v\}$, $\{v_r,v\}$, $\{v,\gamma v\}$ is a basis that achieves the successive minima of $\Lambda$. Moreover, in this case, $\Lambda$ is well-rounded if and only if $v$ is a shortest vector of $\Lambda$.
    \end{enumerate}
\end{corollary}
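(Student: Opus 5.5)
The plan is to split according to the dichotomy of \Cref{prop: unit structure} and, in each case, reduce to a rank~$2$ computation in which $v_t\perp v_r$ (by \Cref{lem:vtvrortho}). Write $a=\|v_t\|^2$ and $b=\|v_r\|^2$. Since $\gamma$ acts orthogonally, fixes $v_t$ and negates $v_r$, we have $\gamma v=(v_t-v_r)/2$. Moreover $\|v\|^2=\|\gamma v\|^2=(a+b)/4$ and $\langle v,\gamma v\rangle=(a-b)/4$.

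\textbf{Case (i):} $\Lambda=\Lambda'=\Z v_t\oplus\Z v_r$. This basis is orthogonal. For $w=pv_t+qv_r\neq0$ with $p,q\in\Z$ we get $\|w\|^2=p^2a+q^2b$, which is at least $\min(a,b)$, and at least $\max(a,b)$ whenever $w$ is independent of the shorter basis vector. So, ordering $\{v_t,v_r\}$ by length, the basis achieves $\lambda_1$ and $\lambda_2$.

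\textbf{Case (ii):} $v\in\Lambda$ and $\Lambda\cong A$. Then $\{v,\gamma v\}$ is a $\Z$-basis, since it spans $\mathrm{span}_A(v)=\Lambda$. Every element has the form $w=\tfrac{m}{2}v_t+\tfrac{n}{2}v_r$ with $m,n\in\Z$ and $m\equiv n\pmod 2$, so $\|w\|^2=(m^2a+n^2b)/4$. I would classify the short vectors by the pair $(|m|,|n|)$:
\begin{itemize}
\item $(1,1)$ gives $\pm v,\pm\gamma v$, with norm $(a+b)/4$;
\item $(2,0)$ gives $\pm v_t$, with norm $a$;
\item $(0,2)$ gives $\pm v_r$, with norm $b$;
\item every other nonzero pair has $m^2\ge 4$ or $n^2\ge 4$, and its norm is at least that of one of these candidates in the same "direction class".
\end{itemize}
Then $\lambda_1^2=\min\{a,b,(a+b)/4\}$. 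Without loss of generality take $a\le b$.
\begin{itemize}
\item If $(a+b)/4\le a$, i.e.\ $b\le 3a$, then $v$ is shortest. The best independent vector is $\gamma v$, of the same length, or $v_t$ with $a\ge (a+b)/4$; so $\{v,\gamma v\}$ achieves the minima.
\item If $b>3a$, then $v_t$ is the unique shortest vector up to sign. Among vectors independent of $v_t$ (those with $n\neq0$), the minimum of $(m^2a+n^2b)/4$ is at $|n|=1$, $|m|=1$, giving $v$; so $\{v_t,v\}$ achieves the minima. Note that $\{v_t,v\}$ is a basis, since $\gamma v=v_t-v$.
\item The symmetric case $a>b$ yields $\{v_r,v\}$, using $\gamma v=v-v_r$.
\end{itemize}

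For the well-roundedness claim: if $v$ is shortest, then $v$ and $\gamma v$ are independent shortest vectors, so $\Lambda$ is well-rounded. Conversely, if $v$ is not shortest, then, say, $a<(a+b)/4$. Every vector with $n\ne0$ then has norm at least $(a+b)/4>a$, so the shortest vectors are $\pm v_t$ only and do not span; similarly when $b$ is smallest.

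The main obstacle is the careful bookkeeping in case (ii): ruling out every other $(m,n)$ with the parity constraint, and checking that each claimed pair really is a $\Z$-basis rather than merely a pair achieving the minima. I would handle this with the explicit inequalities $m^2\ge1$ and $n^2\ge1$ (or one of them zero), using $m\equiv n \pmod 2$.
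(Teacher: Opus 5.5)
Your argument is correct, but it reaches the conclusion differently from the paper.

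\emph{The paper's route.} In case (ii) the paper does no enumeration. It notes that $\{v,\gamma v\}$ works when $v$ is shortest, since $\|\gamma v\|=\|v\|$. Otherwise it takes $w$ to be the shorter of $v_t,v_r$ and invokes the two-dimensional (Lagrange--Gauss) reduction lemma it cites from Galbraith to conclude that $\{w,v\}$ realizes the minima. This implicitly uses $\langle v_t,v\rangle=\|v_t\|^2/2$ and $\|w\|\le\|v\|$. The well-roundedness claim is then read off from the fact that $v$ always realizes one of $\lambda_1,\lambda_2$.

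\emph{Your route.} You write $\Lambda=\{\frac{m}{2}v_t+\frac{n}{2}v_r : m\equiv n \pmod 2\}$ and minimize $(m^2a+n^2b)/4$ by hand.

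\emph{Comparison.} The paper's argument is shorter, but it leaves implicit why, when $v$ is not shortest, the shortest vector must be $v_t$ or $v_r$. Equivalently, it does not check $\|w\|\le\|v\|$, which the reduction lemma needs as a hypothesis. Your enumeration proves exactly this and is self-contained. It also yields the explicit criterion that $v$ is shortest if and only if $\max(a,b)\le 3\min(a,b)$. In addition, your Gram data place the shape directly on one of three boundary pieces:
\begin{itemize}
\item the left boundary, in case (i);
\item the right boundary, since $\langle v_t,v\rangle/\|v_t\|^2=1/2$;
\item the lower arc, since $\|v\|=\|\gamma v\|$.
\end{itemize}
This is precisely the direct-computation alternative mentioned in the proof of \Cref{cor: unit shapes in the boundary}.

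One small wording fix: your sentence on the symmetric case $a>b$ should also cover the subcase $a\le 3b$. There the answer is again $\{v,\gamma v\}$, not $\{v_r,v\}$. Your symmetry argument handles it, but the sentence as written suggests $\{v_r,v\}$ always.
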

\begin{proof}
    If $\Lambda=\Lambda^t\oplus\Lambda^r$, then by \Cref{lem:vtvrortho}, (i) must hold. Otherwise, $\Lambda\cong A$ and $v=(v_t+v_r)/2\in \Lambda$ by \Cref{prop: unit structure}, and it is clear that all three sets of vectors in (ii) form a basis of $\Lambda$. 
    
    If $v$ is a shortest vector of $\Lambda$, then since $\gamma v$ and $v$ have the same length, they achieve the successive minima of $\Lambda$. If $v$ is not shortest, then let $w$ be the shorter vector among $v_t$ and $v_r$. Then \cite[Lemma 17.1.4]{galbraith2012mathematics} implies that the basis $\{w,v\}$ achieves the successive minima of $\Lambda$. This completes the classification of the successive minima of $\Lambda$.

    This shows that if $v\in\Lambda$, then $v$ achieves one of the successive minima of $\Lambda$. It follows that if $v\in\Lambda$, then $\Lambda$ is well-rounded if and only if $v$ is a shortest vector of $\Lambda$, which proves the second assertion in (ii).  
\end{proof}

The following result, for the special cases of $D_4$-quartic fields of signature $(2,1)$ and for totally imaginary non-CM $D_6$-sextic number fields, is part of \cite[Corollary 2]{nakamula1980group}.

\begin{corollary}\label{cor:unit-basis}
    Let $F/K$ be as in \Cref{prop: boundarytype}. Let $\epsilon_t\in \Lambda_{K}$ be a fundamental unit of~$K$ and $\epsilon_r\in O_F^\times$ be such that $v_r=\Log_F (\epsilon_r)$ is a generator of $\Lambda^r$. Then exactly one of the following holds: 
    \begin{enumerate}
        \item[(i)] The set $\{\epsilon_t,\epsilon_r\}$ is a basis of $E_F$.
        
        \item[(ii)] There is a choice of root of unity $\zeta \in \mu_F$ such that $u_t=\sqrt{\zeta\epsilon_t}\in O_F^\times$. Moreover, 
        the set $\{u_t,\epsilon_r\}$ is a basis of $E_{F}$ and  $\Log_F(u_t)$ is a generator of $\Lambda^t$.
       
         \item[(iii)] There is a choice of root of unity $\zeta \in \mu_F$ so that $u=\sqrt{\zeta\epsilon_t\epsilon_r}\in O_F^\times$. Moreover, the set $\{ u, u^\gamma \}$ is a basis is a basis of $E_F$, so $u$ is a Minkowski unit of $F$, and $\Log_F(\epsilon_t)$ is a generator of $\Lambda^t$. 
     \end{enumerate}
\end{corollary}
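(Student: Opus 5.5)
The plan is to combine \Cref{prop: unit structure} with \Cref{prop: subfield in trivial part}. First I will locate $\Log_F(\epsilon_t)$ inside $\Lambda^t$. Since $\Lambda_K\subseteq\Lambda^t$ and both have rank one, $\Log_F(\epsilon_t)=k v_t$ for some positive integer $k$. The proof of \Cref{prop: subfield in trivial part} gives $2\Lambda^t\subseteq\Lambda_K$, so $2v_t\in\Lambda_K=\langle \Log_F(\epsilon_t)\rangle$. Hence $k\in\{1,2\}$. If $k=2$, then $2\Log_F(u_t)=\Log_F(\epsilon_t)$ with $\Log_F(u_t)=v_t$, so $u_t^2=\zeta\epsilon_t$ for some $\zeta\in\mu_F$. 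This is the condition $u_t=\sqrt{\zeta\epsilon_t}\in O_F^\times$. Conversely, if such a $u_t$ exists, then $\Log_F(u_t)=\tfrac12\Log_F(\epsilon_t)$ and, up to sign, it is $v_t$. Throughout I will use that $\Log_F$ identifies $E_F$ with $\Lambda_F$, so a set of units is a basis of $E_F$ exactly when its log vectors form a basis of $\Lambda_F$.

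Next I will split according to \Cref{prop: unit structure}. In case (i) of that proposition, $\Lambda=\Lambda^t\oplus\Lambda^r$ has basis $\{v_t,v_r\}$.
\begin{itemize}
\item If $k=1$, then $v_t=\Log_F(\epsilon_t)$ and $\{\epsilon_t,\epsilon_r\}$ is a basis of $E_F$; this is case (i) of the corollary.
\item If $k=2$, then $\{u_t,\epsilon_r\}$ is a basis and $\Log_F(u_t)=v_t$ generates $\Lambda^t$; this is case (ii).
\end{itemize}
In case (ii) of \Cref{prop: unit structure}, $v=(v_t+v_r)/2\in\Lambda$, $\Lambda\cong A$, and $\{v,\gamma v\}$ is a basis, since $\gamma v=(v_t-v_r)/2$. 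I need to show that $k=1$ here, i.e.\ that $v_t=\Log_F(\epsilon_t)$. Then $2v=\Log_F(\epsilon_t\epsilon_r)$, so a preimage $u$ of $v$ satisfies $u^2=\zeta\epsilon_t\epsilon_r$ for some $\zeta\in\mu_F$. This gives case (iii), with $\{u,u^\gamma\}$ a basis of $E_F$ and $u$ a Minkowski unit (the latter already stated in \Cref{prop: unit structure}).

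The key point, and the main obstacle, is showing $k=1$ in the non-split case. The natural tool is the norm. The map $\N_{F/K}=1+\gamma$ sends $\Lambda$ into $\Lambda_K=\langle kv_t\rangle$, because norms of units of $F$ are units of $K$. But $(1+\gamma)v=v_t$, so $v_t\in\Lambda_K$, which forces $k=1$. I must check here that $\Log_F(\N_{F/K}u)=(1+\gamma)\Log_F(u)$ and that this lies in $\iota(\Lambda_K)$; both are immediate. Symmetrically, in the split case with $k=2$ the norm map lands in $2\Lambda^t=\Lambda_K$, which is consistent.

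It remains to prove that exactly one case holds. Case (iii) is characterized by $v\in\Lambda$, i.e.\ by $\Lambda\not\cong\Lambda^t\oplus\Lambda^r$. Cases (i) and (ii) are distinguished by $k=1$ versus $k=2$. Since $k=2$ is incompatible with case (iii), as just shown, the three conditions are mutually exclusive. I will phrase each case intrinsically (whether $\epsilon_t$ times a root of unity is a square in $F$, and whether $\epsilon_t\epsilon_r$ times a root of unity is) so that the trichotomy is genuinely exclusive and independent of the choice of $\epsilon_r$ up to sign.
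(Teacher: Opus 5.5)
Your proposal is correct and follows essentially the same route as the paper. Both use $2\Lambda^t\subseteq\Lambda_K$ to get $[\Lambda^t:\Lambda_K]\le 2$, split according to \Cref{prop: unit structure}, and in the non-split case use $\N_{F/K}v=v_t\in\Lambda_K$ to force $\Log_F(\epsilon_t)=\pm v_t$. Your explicit check that the three cases are mutually exclusive fills in a point the paper's proof leaves implicit.
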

\begin{proof}
    Let $\Lambda$ and $\Lambda'$ be as in \Cref{prop: unit structure}. View the log unit lattice $\Lambda_{K}$ of $K$ as a sublattice of $\Lambda$, so $\Lambda_{K}\subseteq \Lambda^t$. Then by the definition of $\N_{F/K}$, we have $2\Lambda^t\subseteq \Lambda_{K}$. Hence, if $[\Lambda:\Lambda']=1$, we obtain the first two cases of \Cref{cor:unit-basis}.  If $[\Lambda:\Lambda']=2$, then $v$ as given in \eqref{eq:vectors} belongs to $\Lambda$ by \Cref{prop: unit structure}. By \Cref{remark: definition of vr and vt}, we have $\N_{F/K}\,v = v_t$, so $v_t \in \Lambda_{K}$.
    We conclude that $\Log_F(\epsilon_t)$ generates $\Lambda^t$, and one easily obtains (iii).  
\end{proof}

\begin{corollary}\label{cor: unit shapes in the boundary}
    Let $G\in \mathcal{G}$, with $\mathcal{G}$ as given in \eqref{eq:Galoisgroups}. Then $\Omega(D_4,2,1)\cup\Omega_{{\non-CM}}(G,0,3)$ is contained in the boundary of $\mathcal{S}_2$. Moreover, there are finitely many fields with a given index $2$ subfield as described in part 2 of \Cref{theorem: classification or rank 2} whose unit shape lies on the lower arc.
\end{corollary}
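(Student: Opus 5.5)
By \Cref{prop: boundarytype}, every field $F$ in $\mathcal{F}(D_4,2,1)\cup\mathcal{F}_{\non-CM}(G,0,3)$ has an index~2 subfield $K$. The plan is to apply the dichotomy of \Cref{cor: reduced basis} to $F/K$ and to read off the location of the shape in $\mathcal{D}$ from the reduced basis in each case. In case (i), $\{v_t,v_r\}$ is an orthogonal basis achieving the successive minima. Its Gram matrix is diagonal, so the shape is $x+iy$ with $x=0$, which lies on the left boundary by \Cref{remark: geometric properties from fundamental domain}.

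In case (ii) we have $v=(v_t+v_r)/2\in\Lambda$ and $\gamma v=(v_t-v_r)/2$. Since $v_t\perp v_r$, both $v$ and $\gamma v$ have the same length. We then split according to which basis in \Cref{cor: reduced basis}(ii) is reduced. If $\{v,\gamma v\}$ achieves the successive minima, the lattice is well-rounded and the shape lies on the lower arc. If $\{w,v\}$ with $w\in\{v_t,v_r\}$ achieves the minima, we compute $\langle w,v\rangle=\tfrac12\|w\|^2$, using orthogonality of $v_t$ and $v_r$ from \Cref{lem:vtvrortho}. Normalizing $w$ to $[1,0]^T$ then gives $x=\langle w,v\rangle/\|w\|^2=1/2$, so the shape is on the right boundary. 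The only bookkeeping needed is to check that the sign convention $\langle v_1,v_2\rangle\ge 0$ holds, which is automatic here. This proves the first assertion.

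For the finiteness claim, fix $K$, namely the real quadratic $F_2$ or the complex cubic $F_3$. By \Cref{cor: reduced basis}, the shape lies on the lower arc only in case (ii) with $v$ a shortest vector. There $\|v\|\le\|v_t\|$ and $\|v\|\le\|v_r\|$, i.e.\ $\tfrac14(\|v_t\|^2+\|v_r\|^2)\le\min(\|v_t\|^2,\|v_r\|^2)$, which forces $\|v_r\|^2\le 3\|v_t\|^2$. (In case (i) the shape lies on the lower arc only if $\|v_t\|=\|v_r\|$, which is covered by the same bound.) Next, $v_t$ is determined up to index by $\Log_K$ of a fundamental unit $\epsilon_K$ of $K$, via \Cref{cor:unit-basis}: in every case $\Log_F(\epsilon_K)\in\{v_t,2v_t\}$. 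The rescaling of $\Log_K$ under $\iota$ depends only on the splitting of the archimedean places of $K$ in $F$, which is fixed by the signatures. Hence $\|v_t\|\le C(K)$, and therefore $\|v_r\|\le C'(K)$.

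The main obstacle is converting this bound into finiteness of the fields. The relative unit $\epsilon_r$ with $\Log_F(\epsilon_r)=v_r$ satisfies $\N_{F/K}(\epsilon_r)\in\mu_K$, and it is not in $K$ up to roots of unity, since $v_r\ne0$ lies outside $V^t$. Hence $F=K(\epsilon_r)$. A bound on $\|\Log_F(\epsilon_r)\|$ bounds all archimedean absolute values of $\epsilon_r$ and its conjugates, so its minimal polynomial over $\Q$ (of degree at most $[F:\Q]\le 6$) has bounded integer coefficients. Only finitely many such polynomials exist, so only finitely many $\epsilon_r$ and hence finitely many $F$ arise; this is a Northcott-type count. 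I would write the bound on coefficients explicitly through $\max_\sigma|\sigma(\epsilon_r)|\le e^{\|v_r\|_\infty}$.
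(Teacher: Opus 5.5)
Your proposal is correct.

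\textbf{Boundary assertion.} You follow the paper's second suggested route, computing $z$ directly from the reduced bases of \Cref{cor: reduced basis}. The paper's primary pointer is instead the automorphism-stratification argument of \cite[Proposition 3.1]{RNT}. Your computation is complete: $x=0$ in case (i), $x=\tfrac12$ for $\{w,v\}$, and $|z|=1$ for $\{v,\gamma v\}$.

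\textbf{Finiteness assertion.} Here your route genuinely differs from the paper's.
\begin{itemize}
\item The paper handles the $D_4$ case by citing \cite[Corollary 1.8]{RNT}. For the sextic families it bounds the regulator: $\sqrt3\,R_F\le\|v_1\|\|v_2\|=\|v_1\|^2\le\|\Log_F(\epsilon_t)\|^2$, a quantity depending only on $F_3$. It then invokes Pazuki's theorem \cite{pazuki2014heights} that only finitely many non-CM fields have bounded regulator.
\item You instead use the explicit structure of case (ii) to get $\|v_r\|^2\le 3\|v_t\|^2\le 3\|\Log_F(\epsilon_K)\|^2$. You observe that a relative unit $\epsilon_r$ generates $F$ over $K$. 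You then conclude by the elementary finiteness of algebraic integers of bounded degree whose conjugates are all bounded in absolute value.
\end{itemize}

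\textbf{What each buys.} The paper's argument is shorter and never needs a generator of $F/K$, but it rests on a nontrivial external Northcott-type theorem for regulators. Yours is self-contained and treats the $D_4$ and $\mathcal{G}$ families uniformly. It is also effective: it gives explicit bounds on the coefficients of the minimal polynomial of $\epsilon_r$, hence on the number of fields.

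\textbf{Small points.}
\begin{itemize}
\item You do not need \Cref{cor:unit-basis} to get $\|v_t\|\le\|\Log_F(\epsilon_K)\|$. Any nonzero vector of the rank-one lattice $\Lambda^t=\langle v_t\rangle$ already has length at least $\|v_t\|$.
\item To count fields up to isomorphism, say explicitly that you fix $K\subset\overline{\Q}$ and embed each $F$ compatibly. Then $F=K(\epsilon_r)$ with $\epsilon_r$ ranging over a fixed finite set.
\end{itemize}
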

\begin{proof}
    For proving that the unit shapes from the statement are contained in the boundary, one can proceed exactly as in the proof of \cite[Proposition 3.1]{RNT}, where the authors stratified different regions of the fundamental domain w.r.t.\ the automorphism group of the lattice, and used the fact that the lattice admits a non-trivial automorphism.  Alternatively, one can directly compute the point $z\in \mathcal{S}_2$ corresponding to each of the cases in \Cref{cor: reduced basis} using the Gram matrix of each basis.  
    
    The finiteness of the family of $D_4$ number fields with a fixed real quadratic subfield and a well-rounded log unit lattice was proved in \cite[Corollary 1.8]{RNT}.     
   Now let consider a field $F \in \mathcal{F}_{{\non-CM}}(G,0,3)$, where $G \in \mathcal{G}$, with a fixed complex cubic subfield $F_3$. Let $\epsilon_t$ be the fundamental unit of $F_3$. Suppose that $\Lambda_F$ is well-rounded and let $\{v_1, v_2\}$ be a basis of $\Lambda_F$ consisting of shortest vectors. Then $\|v_1\|=\|v_2\|$.  The covolume of $\Lambda_F$ is $\sqrt{3} R_F$ where $R_F$ is the regulator of $F$. Hence
$$\sqrt{3} R_F \le \|v_1\| \|v_2\| =\|v_1\|^2 \le \|\Log_F(\epsilon_t)\|^2.$$
Since the cubic field $F_3$ is fixed, it follows that
the regulator of $F$ is bounded. Since the number of non-CM fields with bounded regulator is finite (see \cite[Theorem 1.1]{pazuki2014heights}), the result follows.
\end{proof}

\subsubsection{Transcendentality}\label{subsection: transcendentality}
As before, let $F/K$ be as in \Cref{prop: boundarytype}, and let $H=\Gal(F/K) = \langle \gamma \rangle$. Let $E\coloneq\hom(F,\C)$. We fix an ordering of the set $E$ and explicitly specify the embeddings on the ordered set $E$ defining the map  $\Log_F:O_F^\times\rightarrow \R^{3}$ used in the remainder of Section~\ref{sec:rank2}. Consider the equivalence relation $\sim$ on $E$ given by: $\rho\sim\rho'$ \text{if and only if} $\rho=\overline{\rho'}$, for $\rho,\rho'\in E.$ Since $\rk(\Lambda_F)=2$, we must have $\#(E/\sim)=3$.

The group $H$ acts on the set $E/\!\!\sim$\, via $\gamma\cdot[\rho]=[\rho\circ \gamma^{-1}]$. Since $H$ acts non-trivially on~$\Lambda_F$, it is easy to deduce from the Orbit-Stabilizer Theorem (see \cite[Prop.\ 2]{DummitFoote2004} for example) that this action has one orbit with only one equivalence class $\mathcal O_1\coloneq\{[\rho]\}$ and one orbit with two equivalence classes $\mathcal O_2\coloneq\{[\rho'],[\rho'']\}$. 
Fix a representative $\rho_1$ of the unique equivalence class $[\rho]$ in $\mathcal O_1$ and representatives $\rho_2$ and $\rho_3$ of the two equivalence classes $[\rho']$, $[\rho'']$ contained in $\mathcal O_2$. We explicitly define the map $\Log_F:O_F^\times\rightarrow \R^3$ as follows.
    \begin{equation} \label{eq:choice of log map}
    \Log_F(u)=\begin{bmatrix}
        \delta_1\log|\rho_1(u)|\\
        \delta_2\log|\rho_2(u)|\\
        \delta_3\log|\rho_3(u)|
    \end{bmatrix},\quad \delta_i=\quad\begin{cases}
        1&\text{if $\rho_i$ is real},\\
        2&\text{if $\rho_i$ is complex},
    \end{cases}
    \quad i=1,2,3.
    \end{equation}
    As before, the above choices are non-canonical, but two different choices for $\Log_F$ will produce isometric log unit lattices.

The following lemma is based on \cite[Step 1]{RNT}. In order for our paper to be self-contained, we provide our own exposition of the proof, although we follow the same procedure as in \textit{loc.cit}.

\begin{lemma} \label{lem:embedding}
    Let $F/K$ be as in \Cref{prop: boundarytype}, and let $u_1,u_2\in O_F^\times$ be such that $\Log_F(u_1)\in \Lambda_{F/K}^t$ and $\Log_F(u_2)\in \Lambda_{F/K}^r$. With the map $\Log_F$ given in \eqref{eq:choice of log map}, there exist $a,b\in \R$ such that
    \[
      \Log_F(u_1)=\begin{bmatrix}
          2a\\
          -a\\
          -a
      \end{bmatrix}\quad\text{and}\quad \Log_F(u_2)=\begin{bmatrix}
          0\\
          b\\
          -b
      \end{bmatrix}. 
    \]
\end{lemma}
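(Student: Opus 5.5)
The plan is to understand how $\gamma$ acts on the coordinates of $\Log_F$, and then impose the two defining conditions: invariance for $\Lambda^t$, and vanishing relative norm for $\Lambda^r$. Since $\gamma$ acts on $E/\!\sim$ with orbits $\mathcal O_1=\{[\rho_1]\}$ and $\mathcal O_2=\{[\rho_2],[\rho_3]\}$, we have $\rho_1\circ\gamma^{-1}\in\{\rho_1,\overline{\rho_1}\}$. Likewise $\rho_2\circ\gamma^{-1}$ and $\rho_3\circ\gamma^{-1}$ lie in $[\rho_3]$ and $[\rho_2]$ respectively. Because $|\overline{z}|=|z|$, it follows that for every $u\in O_F^\times$
\[
\log|\rho_1(\gamma u)|=\log|\rho_1(u)|,\qquad \log|\rho_2(\gamma u)|=\log|\rho_3(u)|,\qquad \log|\rho_3(\gamma u)|=\log|\rho_2(u)|,
\]
up to replacing $\gamma$ by $\gamma^{-1}=\gamma$. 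One point needs care: $\rho_2$ and $\rho_3$ must have the same type (both real or both complex), so that $\delta_2=\delta_3$. This holds because $\rho_3=\rho_2\circ\gamma^{-1}$ or its conjugate, and $\rho_2\circ\gamma^{-1}$ is real exactly when $\rho_2$ is. Hence $\gamma$ acts on $\R^3$ by fixing the first coordinate and swapping the last two, i.e.\ as the permutation matrix of the transposition $(2\,3)$.

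For $u_1$ with $\Log_F(u_1)=(x_1,x_2,x_3)^T\in\Lambda^t$, invariance under $\gamma$ gives $x_2=x_3$. Since $\Log_F(u_1)$ lies in the trace-zero hyperplane $\mathbb H$, we get $x_1=-2x_2$, and setting $a=x_1/2$ yields $(2a,-a,-a)^T$. For $u_2$ with $\Log_F(u_2)=(y_1,y_2,y_3)^T\in\Lambda^r$, \Cref{lem:vtvrortho} gives $(1+\gamma)\Log_F(u_2)=0$. By the coordinate action above this reads $(2y_1,\,y_2+y_3,\,y_2+y_3)=0$, so $y_1=0$ and $y_3=-y_2=:-b$. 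The same conclusion also follows from orthogonality of $\Lambda^r$ to $\Lambda^t$ together with the trace-zero condition.

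The only real obstacle is making rigorous how precomposition with $\gamma$ interacts with the chosen representatives and the weights $\delta_i$. This is the bookkeeping of complex conjugates within each class and of ensuring $\delta_2=\delta_3$. I would handle it by writing $\rho_i\circ\gamma^{-1}=\rho_{\pi(i)}$ or $\overline{\rho_{\pi(i)}}$ with $\pi=(2\,3)$, and then using $|\overline{z}|=|z|$.
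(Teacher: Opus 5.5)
Your proposal is correct and follows essentially the same route as the paper: from $[\rho_1\circ\gamma]=[\rho_1]$ and $[\rho_2\circ\gamma]=[\rho_3]$, $\gamma$ fixes the first coordinate and swaps the last two; you then impose $\gamma$-invariance and trace zero for $u_1$, and $(1+\gamma)\Log_F(u_2)=0$ for $u_2$. Your explicit check that $\delta_2=\delta_3$ supplies a detail the paper leaves implicit.
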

\begin{proof}
 Recall that
\begin{equation}\label{eq: action}
    [\rho_2\circ \gamma]=[\rho_3]\quad \text{and}\quad [\rho_1\circ\gamma]=[\rho_1].
\end{equation}
Since $\Log_F(u_1)\in \Lambda_{F/K}^t$, we must have $\log|\rho_2(u_1)|=\log|\rho_3(u_1)|$. From here, it is easy to derive the claimed form for $\Log_F(u_1)$. To conclude the result for $u_2$, use \eqref{eq: action} and $u_2^{1+\gamma}\in \ker(\Log_F)$ to obtain
\[\begin{bmatrix}
    0\\
    0\\
    0
\end{bmatrix}=\Log_F(u_2^{1+\gamma})=\begin{bmatrix}
    2\delta_1\log|\rho_1(u_2)|\\
    \delta_2(\log|\rho_2(u_2)|+\log|\rho_3(u_2)|)\\
     \delta_3(\log|\rho_3(u_2)|+\log|\rho_2(u_2)|)\\
\end{bmatrix}.\]
It is now straightforward to derive the seerted form for $\Log_F(u_r)$.
\end{proof}

We now present two technical results that enable us to use the strategy in \cite[Proposition 3.4]{RNT}  for proving the second case of \Cref{theorem: classification or rank 2}. 
\begin{lemma}\label{lem: relative in the Galois closure}
    Suppose $F/K$ is as in \Cref{prop: boundarytype} and let $\Tilde{F}$ and $\Tilde{K}$ be the Galois closures of $F/\Q$ and $K/\Q$, respectively. Let $u_2\in O_F^\times$ be such that $\Log_F(u_2)\in \Lambda_{F/K}^r$ and $u_2\notin \ker(\Log_F)$. Then $\N_{\Tilde{F}/\Tilde{K}}(u_2)$ is a root of unity in $\Tilde{K}$. 
\end{lemma}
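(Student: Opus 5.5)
The idea is to factor the norm $\N_{\Tilde{F}/\Tilde{K}}$ through the intermediate compositum $\Tilde{K}F$, and to reduce everything to the relation $\N_{F/K}(u_2)\in\mu_K$.

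First, by \Cref{lem:vtvrortho}, the condition $\Log_F(u_2)\in\Lambda^r_{F/K}$ means exactly that $u_2$ is a relative unit of $F/K$. So $\zeta\coloneq\N_{F/K}(u_2)\in\mu_K$.

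Next I will show that $F\not\subseteq\Tilde{K}$. Suppose $F\subseteq\Tilde{K}$. Since $\Tilde{K}/\Q$ is Galois, this would force $\Tilde{F}\subseteq\Tilde{K}$, hence $\Tilde{F}=\Tilde{K}$ and $|G|=[\Tilde{K}:\Q]$. I rule this out case by case.
\begin{itemize}
\item For $F\in\mathcal{F}(D_4,2,1)$, the field $K$ is a real quadratic field. So $\Tilde{K}=K$ has degree $2$, which is less than $|D_4|=8$.
\item For $F\in\mathcal{F}_{\non-CM}(G,0,3)$ with $G\in\mathcal{G}$, the field $K$ is a complex cubic field. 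So $[\Tilde{K}:\Q]=6$, while every group in $\mathcal{G}$ has order at least $12$.
\end{itemize}
Hence $F\not\subseteq\Tilde{K}$.

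Since $F=K(\alpha)$ with $[F:K]=2$ and $K\subseteq\Tilde{K}$, we have $\Tilde{K}F=\Tilde{K}(\alpha)$ and $[\Tilde{K}F:\Tilde{K}]\le 2$. By the previous step this degree is exactly $2$. The extension $\Tilde{K}F/\Tilde{K}$ is the base change of the Galois extension $F/K$, so it is Galois. Its nontrivial element restricts to an element of $\Gal(F/K)$, and this restriction is injective on a group of order $2$. It is therefore an isomorphism $\Gal(\Tilde{K}F/\Tilde{K})\to\Gal(F/K)$, by the same argument as in the proof of \Cref{lem: orthogonality of relative units}. Consequently
\[
\N_{\Tilde{K}F/\Tilde{K}}(\theta)=\N_{F/K}(\theta)\quad\text{for all }\theta\in F.
\]

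Now use transitivity of norms in the tower $\Tilde{K}\subseteq\Tilde{K}F\subseteq\Tilde{F}$. Since $u_2\in F\subseteq\Tilde{K}F$, we have $\N_{\Tilde{F}/\Tilde{K}F}(u_2)=u_2^{m}$ with $m=[\Tilde{F}:\Tilde{K}F]$. Therefore
\[
\N_{\Tilde{F}/\Tilde{K}}(u_2)=\N_{\Tilde{K}F/\Tilde{K}}\bigl(u_2^{m}\bigr)=\N_{F/K}(u_2)^{m}=\zeta^{m}.
\]
The right-hand side is a root of unity. It lies in $\Tilde{K}$ because it is a norm down to $\Tilde{K}$.

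The main point needing care is the degree bookkeeping that gives $F\not\subseteq\Tilde{K}$; this is the only place the specific groups in $D_4$ and $\mathcal{G}$ enter. The hypothesis $u_2\notin\ker(\Log_F)$ is not needed for this argument; it only ensures the lemma is non-vacuous.
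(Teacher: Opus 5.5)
Your proof is correct and follows essentially the same route as the paper. Both arguments hinge on the degree count showing $F \not\subseteq \tilde{K}$, so that the quadratic extension $F/K$ stays quadratic over $\tilde{K}$ and $\N_{\tilde{F}/\tilde{K}}(u_2)$ becomes a power of the root of unity $\N_{F/K}(u_2)$.

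The only difference is phrasing. The paper argues that the minimal polynomial of $u_2$ over $K$ remains irreducible over $\tilde{K}$, and it implicitly uses $u_2 \notin \ker(\Log_F)$ there to get $u_2 \notin K$. You use Galois restriction and transitivity of norms instead. This makes the final norm computation explicit and, as you observe, does not need that hypothesis.
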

\begin{proof}
    Since $[F:K] = 2$, the minimal polynomial $m(x)$ of $u_2$ over $K$ is quadratic. Its constant coefficient is $\N_{F/K}(u_2)$, which is a root of unity by \Cref{lem:vtvrortho}. It suffices to prove that $m(x)$ is also the minimal polynomial of $u_2 \in \tilde{F}$ over $\tilde{K}$.
    To that end, note that the extension $\Tilde{K}/K$ has degree 1 if $K$ is quadratic and 2 if $K$ is cubic. It follows that $F \not\subseteq \tilde{K}$. In particular $u_2 \notin \tilde{K}$, so $m(x)$ does not factor over $\tilde{K}$.
\end{proof}

\begin{proposition}\label{prop: Qbarindependence}
Suppose $F/K$ is as in \Cref{prop: boundarytype}. Let $u_1,u_2\in O_F^\times$, be such that $u_1,u_2\notin\ker(\Log_F)$, $\Log_F(u_1)\in \Lambda_{F/K}^t$, and $\Log_F(u_2)\in \Lambda_{F/K}^r$. Choose an embedding $\rho: F \to \mathbb{C}$ such that $\log|\rho (u_2)| \neq 0$. Then $\log|\rho(u_1)|$ and $\log|\rho(u_2)|$ are $\bar{\mathbb{Q}}$-linearly independent.
\end{proposition}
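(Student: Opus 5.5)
The natural tool is Baker's theorem on linear forms in logarithms: if logarithms of algebraic numbers are linearly independent over $\Q$, then they are linearly independent over $\overline{\Q}$. So the plan is to express $\log|\rho(u_1)|$ and $\log|\rho(u_2)|$ as genuine logarithms of algebraic numbers and show they are $\Q$-linearly independent. Write $\log|\rho(u_i)| = \tfrac12\log(\rho(u_i)\overline{\rho(u_i)})$. Here $\overline{\rho(u_i)}$ is a conjugate of $\rho(u_i)$ lying in $\tilde F$, so $\alpha_i\coloneq \rho(u_i)\overline{\rho(u_i)}$ is a positive real algebraic number; when $\rho$ is real we simply take $\alpha_i=\rho(u_i)^2$. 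It therefore suffices to show that $\log\alpha_1$ and $\log\alpha_2$ (real logarithms) are $\Q$-linearly independent.

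Suppose instead that $m\log\alpha_1 = n\log\alpha_2$ with $(m,n)\neq(0,0)$ integers. Equivalently, $m\log|\rho(u_1)| = n\log|\rho(u_2)|$, i.e.\ $|\rho(u_1^m u_2^{-n})|=1$. We use \Cref{lem:embedding}. Because $\log|\rho(u_2)|\neq 0$, the embedding $\rho$ corresponds to $\rho_2$ or $\rho_3$ (the orbit $\mathcal O_2$), and there $\log|\rho(u_1)| = -a$ and $\log|\rho(u_2)| = \pm b$ with $b\neq 0$. The relation then reads $-ma=\pm nb$.

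The key step is to upgrade this single-coordinate relation to a relation in the whole lattice. Consider $w = m\Log_F(u_1) - (\text{sign})\,n\Log_F(u_2)$. Its coordinate at $\rho$ vanishes by assumption. Its first coordinate is $2ma$. Because $w$ has trace zero, the third coordinate is $-2ma$, whereas by the explicit forms it equals $-ma \pm nb$, which combined with $-ma=\pm nb$ gives $-2ma$ again. This is consistent but not yet contradictory, so the argument must go beyond a coordinate count. The cleaner route is to use the Galois action. Applying $\gamma$ swaps the $\rho_2$ and $\rho_3$ classes, so $|\rho\gamma(u_1^m u_2^{-n})|=1$ as well. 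This yields $-ma = \mp nb$, hence $nb=0$ and $ma=0$. Since $b\neq 0$ and $a\neq 0$ (as $u_1\notin\ker\Log_F$), we get $m=n=0$, a contradiction.

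To legitimately transport the relation to $\rho\circ\gamma$, I will use that $\gamma$ extends to an automorphism of $\tilde F$ and that $u_2^{\gamma}=\zeta u_2^{-1}$ by \Cref{lem:vtvrortho} (and \Cref{lem: relative in the Galois closure} in $\tilde F$). At the level of algebraic numbers, the cleanest statement is that a $\Q$-relation $\alpha_1^m=\alpha_2^n$ is preserved by a suitable automorphism of $\tilde F$ that acts on the pair $(\rho,\bar\rho)$ like $\gamma$. The main obstacle is making this automorphism argument precise, in particular ensuring that complex conjugation commutes appropriately so that $\sigma(\alpha_i)$ corresponds to $|\rho_3(u_i)|^2$. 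If that proves delicate, I would fall back on the lattice argument directly: the relation $|\rho(x)|=1$ for $x=u_1^mu_2^{-n}$, together with $x^{1+\gamma}$ being the unit $u_1^{2m}$ up to torsion, gives two linear conditions on $\Log_F(x)$ that force $m=n=0$. Once $\Q$-independence is established, Baker's theorem gives $\overline{\Q}$-independence.
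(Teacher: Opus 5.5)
\textbf{There is a genuine gap at the transport step.} Your reduction is sound up to that point: Baker reduces everything to $\Q$-independence of $\log\alpha_1,\log\alpha_2$ with $\alpha_i=\rho(u_i)\overline{\rho(u_i)}$, and \Cref{lem:embedding} correctly places $\rho$ in the orbit $\mathcal O_2$, so $\log|\rho(u_1)|\neq 0$.

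The gap is the sentence ``applying $\gamma$ \ldots\ so $|\rho\gamma(x)|=1$ as well''. A relation $|\rho(x)|=1$ at one place does not carry over to another place. Since $\gamma(u_1)=\zeta u_1$ and $\gamma(u_2)=\zeta' u_2^{-1}$, the condition $|\rho\gamma(x)|=1$ says $m\log|\rho(u_1)|+n\log|\rho(u_2)|=0$. Combined with your hypothesis, this is already the statement $m=n=0$, so the step assumes what you want to prove.

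Your fallback does not help either. The identity $(1+\gamma)\Log_F(x)=2m\Log_F(u_1)$ holds for all $m,n$, so it imposes no condition. You are left with a single integer linear relation between $a$ and $b$. In fact nothing in the data you are using can exclude $b/a\in\Q$: the $H$-action on $\Lambda_F\subset\R^3$ with generators of the form $(2a,-a,-a)$, $(0,b,-b)$ is realized equally well for rational $b/a$. Excluding this is an arithmetic statement about $\alpha_1,\alpha_2$, which for complex $\rho$ need not lie in $\rho(F)$.

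For real $\rho$ you would be fine, because $|\rho(x)|=1$ forces $x=\pm1$, hence $\Log_F(x)=0$ and $m=n=0$; in the $D_4$ case $\rho$ is necessarily real. For the totally imaginary sextics, however, every $\rho$ is complex, and a non-torsion unit can have absolute value $1$ at one complex place. Indeed, $u_2$ itself does so at $\rho_1$.

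\textbf{The missing idea} is to pass to $\tilde F$ and replace the single automorphism $\gamma$ by the relative norm to $\tilde K$. This is what the paper does.
\begin{itemize}
\item Extend $\rho$ to $\tilde F$ and let $\tau\in\Gal(\tilde F/\Q)$ induce complex conjugation. Then $\alpha_i=\rho(u_i^{1+\tau})$, and $\alpha_1^m=\alpha_2^n$ becomes $u_1^{m(1+\tau)}=u_2^{n(1+\tau)}$ in $\tilde F$.
\item Since $\tilde K/\Q$ is Galois, $\tau$ normalizes $\Gal(\tilde F/\tilde K)$, so $\N_{\tilde F/\tilde K}$ commutes with $1+\tau$.
\item By \Cref{lem: relative in the Galois closure}, $\N_{\tilde F/\tilde K}(u_2^{1+\tau})$ is a root of unity.
\item Since $u_1^2\in K\mu_F$, we get $\N_{\tilde F/\tilde K}(u_1^{1+\tau})=(u_1^{1+\tau})^{[\tilde F:\tilde K]}$ up to torsion.
\item Apply the norm to the relation and take $|\rho(\cdot)|$. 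This gives $|\rho(u_1)|^{2m[\tilde F:\tilde K]}=1$, so $m=0$, and then $n=0$.
\end{itemize}
The paper phrases the same argument as orthogonality of the nonzero vectors $\Log_{\tilde F}(u_1^{1+\tau})$ and $\Log_{\tilde F}(u_2^{1+\tau})$, obtained from \Cref{lem:vtvrortho} applied to $\tilde F/\tilde K$. Baker's theorem then finishes.
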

\begin{proof}
   By Baker's Theorem on the linear independence of logarithms (\cite[Corollary 1]{BakerLinearIndependence}), it suffices to prove that $\log|\rho(u_1)|$ and $\log|\rho(u_2)|$ are $\Q$-linearly independent. 

Fix an extension of the embedding $\rho:F\rightarrow \C$ (again denoted $\rho$) to the Galois closure~$\Tilde{F}$ of $F/\Q$. Choose $\tau \in \Gal(\Tilde{F}/\mathbb{Q})$ such that $\tau$ acts as  complex conjugation on $\rho(\tilde{F})$; that is, $\rho(\tau(x))=\overline{\rho(x)}$ for every $x\in \Tilde{F}$.    Note that our choice of $\tau$ gives $|\rho(u_1)|^2=\rho(u_1^{1+\tau})$ and $|\rho(u_2)|^2=\rho({u_2}^{1+\tau})$. 

We claim that the $\Q$-linear independence of the set $\{\log|\rho(u_1)|,\log|\rho (u_2)|\}$ follows from the  assertion that the vectors $\Log_{\Tilde{F}}(u_1^{1+\tau})$ and $\Log_{\Tilde{F}}(u_2^{1+\tau})$ are non-zero and orthogonal. To see this, note that the map $\rho|_{K^\times}:K^\times\rightarrow \C^\times$ is injective, so the embedded units $\rho(u_1^{1+\tau})$ and $\rho(u_2^{1+\tau})$ are positive real numbers that are multiplicatively independent. Since $\log:\R_{>0}\rightarrow \R$ is injective, it takes multiplicatively independent sets to linearly independent sets in the abelian group $\R^\times$, proving our claim.

To prove the assertion, note that $\log|\rho(u_2)|\neq0$ by assumption, and $\log|\rho(u_1)|\neq 0$ as  otherwise \Cref{lem:embedding} would imply that $u_1\in \ker(\Log_{\Tilde{F}})$. To prove orthogonality, let $\Tilde{K}$ be the Galois closure of $K/\Q$. 
By \Cref{lem: relative in the Galois closure}, $\N_{\tilde{F}/\tilde{K}} (u_2)$ is a root of unity, so 
$\N_{\Tilde{F}/\tilde{K}}(u_r^{\tau})$ is also a root of unity, Hence, their product
\[ \N_{\tilde{F}/\tilde{K}}(u_2) \N_{\tilde{F}/\tilde{K}}(u_2^{\tau}) = \N_{\tilde{F}/\tilde{K}}(u_2^{1+\tau}) \]
is a root of unity in $\tilde{K}$. By \Cref{lem:vtvrortho}, $\Log_{\Tilde{F}}(u_1^{(1+\tau)})$ and $\Log_{\Tilde{F}}(u_2^{(1+\tau)})$ are orthogonal, which proves the assertion.  
\end{proof}
We are now ready to provide a proof for part~2 of \Cref{theorem: classification or rank 2}. We already established that for $F/K$ as in \Cref{prop: boundarytype}, the shape $[\Lambda_F]$ lies on the boundary of $\mathcal{S}_2$. It remains to show that $[\Lambda_F]$ is transcendental.

\begin{proposition}\label{theorem: trascendental point}
    
    Let \( F/K \) be as in \Cref{prop: boundarytype}. Then \( [\Lambda_F] \) is transcendental. 
\end{proposition}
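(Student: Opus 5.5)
We will compute the point $z=x+iy\in\mathcal{S}_2$ explicitly in each case of \Cref{cor: reduced basis}, and reduce transcendence to the $\overline{\Q}$-linear independence of \Cref{prop: Qbarindependence}. By \Cref{lem:embedding}, with $\Log_F$ as in \eqref{eq:choice of log map}, write $v_t=(2a,-a,-a)^T$ and $v_r=(0,b,-b)^T$ with $a,b\neq 0$. Then $\|v_t\|^2=6a^2$, $\|v_r\|^2=2b^2$, $\langle v_t,v_r\rangle=0$. Here $a=\delta_1\log|\rho_1(u_t)|/2$ (up to the sign convention) and $b=\delta_2\log|\rho_2(u_r)|$, where $u_t,u_r$ are the preimages of $v_t,v_r$. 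The ratio $t\coloneq b^2/a^2$ therefore has the form $\lambda\,(\log|\rho(u_2)|/\log|\rho(u_1)|)^2$ with $\lambda\in\Q^{>0}$, for a single embedding $\rho$. This is because $|\rho_1(u_t)|$ and $|\rho_2(u_t)|$ are related by $\log|\rho_2(u_t)|=-\tfrac{\delta_1}{\delta_2}\log|\rho_1(u_t)|$. So we may take $\rho=\rho_2$ for both, after rewriting $a$ via the second coordinate.

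\textbf{Case (i): $\Lambda=\Lambda'$.} The lattice is orthogonal, and $z=iy$ with $y=\sqrt{\max(6a^2,2b^2)/\min(6a^2,2b^2)}$, so $y^2=3/t$ or $t/3$. If $z$ were algebraic, then $y$ and hence $t$ would be algebraic. Then $\log|\rho(u_2)|/\log|\rho(u_1)|$ would be algebraic, say equal to $\beta\in\overline{\Q}$, giving $\log|\rho(u_2)|-\beta\log|\rho(u_1)|=0$. This contradicts \Cref{prop: Qbarindependence}, applied with $u_1=u_t$, $u_2=u_r$ and $\rho$ chosen so that $\log|\rho(u_2)|\neq0$; the choice $\rho=\rho_2$ works since $b\neq0$.

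\textbf{Case (ii): $v=(v_t+v_r)/2\in\Lambda$.} We take whichever of the bases $\{v_t,v\}$, $\{v_r,v\}$, $\{v,\gamma v\}$ achieves the successive minima, and compute $z$ from its Gram matrix. For example, with the basis $\{v,\gamma v\}$, we have $\gamma v=(v_t-v_r)/2$, so
\[
\|v\|^2=\tfrac{6a^2+2b^2}{4},\qquad \langle v,\gamma v\rangle=\tfrac{6a^2-2b^2}{4},
\]
and $x=|3-t|/(3+t)$ is a rational function of $t$ with rational coefficients. For $\{v_t,v\}$ we get $x=1/2$ and $y=\sqrt{t/12}$ (up to normalisation), and similarly for $\{v_r,v\}$. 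In every subcase, $x$ or $y$ is a nonconstant rational function of $t$ over $\Q$, or the square root of one. Hence algebraicity of $z$ (equivalently, of both $x$ and $y$, since $\mathcal{S}_2\subset\C$ and $z$ algebraic implies $\bar z$ algebraic) forces $t\in\overline{\Q}$. This again contradicts \Cref{prop: Qbarindependence}.

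One degenerate point must be excluded. In the $\{v,\gamma v\}$ case with $t=3$, we get $x=0$ and $y=1$, which is algebraic. But $t=3$ means $b^2=3a^2$, i.e.\ the two logarithms have algebraic ratio $\sqrt{3}\cdot$rational, which is again impossible by \Cref{prop: Qbarindependence}.

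The main obstacle is bookkeeping rather than depth. We must check that in every subcase the map $t\mapsto z$ is nonconstant and algebraic over $\Q$, and that the rational factor $\lambda$ relating $a$ to $\log|\rho_2(u_t)|$ is handled correctly given the differing $\delta_i$. We must also make sure that $\rho$ is chosen with $\log|\rho(u_r)|\neq0$, which holds for $\rho_2$ because $b\neq0$. Once $t$ is shown transcendental, the transcendence of $z$ follows uniformly.
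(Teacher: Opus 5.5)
Your proof is correct. It uses the same key input as the paper, namely that $t=b^2/a^2=\bigl(\log|\rho_2(u_r)|/\log|\rho_2(u_t)|\bigr)^2$ is transcendental by \Cref{prop: Qbarindependence}. The difference lies in how you get from $z$ to $t$.

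The paper bypasses \Cref{cor: reduced basis} altogether. If $z=x+iy$ is algebraic, then so are $x$ and $y$, so every squared length $(m+nx)^2+n^2y^2$ in the model lattice $\Z+\Z z$ is algebraic. Hence every ratio of squared lengths of vectors in the similar lattice $\Lambda_F$ is algebraic. Applying this to $\|v_t\|^2/\|v_r\|^2=3/t$ finishes the argument with no case split. The paper words this as ``for any two units'', which is too broad (take $u$ and $u^2$); it has to be read as the pair $u_t,u_r$, exactly as you set it up.

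Your case-by-case computation is longer but gives more. It locates $z$ explicitly:
\begin{itemize}
\item as $i\sqrt{3/t}$ or $i\sqrt{t/3}$ in case (i);
\item as $\tfrac12+i\sqrt{t/12}$ or $\tfrac12+i\sqrt{3/(4t)}$ for the bases $\{v_t,v\}$ and $\{v_r,v\}$;
\item on the unit circle with $x=|3-t|/(3+t)$ for the basis $\{v,\gamma v\}$.
\end{itemize}
This is precisely the ``direct computation'' alternative mentioned in the proof of \Cref{cor: unit shapes in the boundary}.

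Two small remarks. First, the separate treatment of $t=3$ is unnecessary, since your general argument already inverts $t\mapsto|3-t|/(3+t)$. Second, from the first two coordinates of $v_t$ one gets $\log|\rho_2(u_t)|=-\tfrac{\delta_1}{2\delta_2}\log|\rho_1(u_t)|$, not $-\tfrac{\delta_1}{\delta_2}\log|\rho_1(u_t)|$. This is harmless: reading $a$ off the second coordinate gives $a=-\delta_2\log|\rho_2(u_t)|$ directly, so in fact $\lambda=1$.
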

\begin{proof} 
    Let $z = x+iy$ be the complex number in $\mathcal{S}_2$ representing $[\Lambda_F]$. 
    For the sake of contradiction, suppose $z$ is an algebraic number. Then the length of any vector in the lattice generated by the basis $\left \{\begin{bmatrix}
    1
    \\0
    \end{bmatrix}, 
    \begin{bmatrix}
        x \\ y 
    \end{bmatrix}\right \}$ 
    is also algebraic. Therefore, the ratio of the lengths of any two such vectors is algebraic. 
    By \Cref{prop: Qbarindependence}, for any two units $u_1,u_2 \in O_F^{\times}$, the ratio $\frac{\|\Log_F(u_1)\|}{\|\Log_F(u_2)\|}$  is a transcendental number. Since rotations, reflections, and scalings of lattices preserve the transcendentality of the ratio of the lengths of such vectors, this yields a contradiction. Consequently, 
    $z$ is transcendental.     
\end{proof}

We conclude this subsection with a necessary and sufficient condition for a transcendental unit shape on the boundary of $\mathcal{S}_2$ to be orthogonal.

\begin{corollary}\label{cor: characterization of orthogonality}
    Let $F/K$ be as in \Cref{prop: boundarytype}, and let $v_t$ and $v_r$ be generators of $\Lambda_{F/K}^t$ and $\Lambda_{F/K}^r$, respectively. Then $\Lambda_F$ is orthogonal if and only if $\sqrt{u_t u_r}\notin F$ for any choice
    of pre-images $u_t\in \Log_{F}^{-1}(\{v_t\})$ and $u_r\in \Log_{F}^{-1}(\{v_r\})$. Equivalently, $\Lambda_F$ is orthogonal if and only if $F$ does not admit a Minkowski unit. Moreover, if $\Lambda_F$ is well-rounded, then $F$ has a Minkowski unit.
\end{corollary}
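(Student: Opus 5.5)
The plan is to reduce everything to the dichotomy of \Cref{prop: unit structure} and \Cref{cor: reduced basis}: either $\Lambda_F=\Lambda^t\oplus\Lambda^r$, or $\Lambda_F\cong A$ with $v=(v_t+v_r)/2\in\Lambda_F$. In the first case $\{v_t,v_r\}$ is an orthogonal basis by \Cref{lem:vtvrortho}, so $\Lambda_F$ is orthogonal.

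\textbf{Showing that $\Lambda_F$ is not orthogonal in the second case.} The bases listed in \Cref{cor: reduced basis}(ii) achieve the successive minima. For a rank~2 lattice, being orthogonal is equivalent to a minima-achieving basis being orthogonal: the shape lies on the left boundary, so the reduced Gram matrix has $x=0$ (\Cref{remark: geometric properties from fundamental domain}).

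\begin{itemize}
\item For $\{v_t,v\}$ we have $\langle v_t,v\rangle=\|v_t\|^2/2\neq 0$.
\item Likewise, for $\{v_r,v\}$ we have $\langle v_r,v\rangle=\|v_r\|^2/2\neq0$.
\item For $\{v,\gamma v\}$ we have $\gamma v=(v_t-v_r)/2$, so $\langle v,\gamma v\rangle=(\|v_t\|^2-\|v_r\|^2)/4$. This vanishes only if $\|v_t\|=\|v_r\|$.
\end{itemize}

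The last possibility is excluded by transcendence. By \Cref{lem:embedding}, write $v_t=(2a,-a,-a)^T$ and $v_r=(0,b,-b)^T$. Then $\|v_t\|^2=6a^2$ and $\|v_r\|^2=2b^2$, so equality would force $b/a=\pm\sqrt3$, an algebraic ratio. Here $a$ and $b$ are, up to the factors $\delta_i$, the quantities $\log|\rho(u_t)|$ and $\log|\rho(u_r)|$ for a suitable embedding $\rho$. \Cref{prop: Qbarindependence} says these are $\overline{\Q}$-linearly independent, which contradicts $b/a=\pm\sqrt3$. The nonzero-ness conditions in that proposition hold because $v_t,v_r\neq0$.

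So $\Lambda_F$ is orthogonal if and only if $v\notin\Lambda_F$.

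\textbf{Translating to units.} I will show that $v\in\Lambda_F$ if and only if $\sqrt{u_tu_r}\in F$ for some choice of pre-images, by tracking roots of unity.

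\begin{itemize}
\item Suppose $v=\Log_F(w)$. Then $\Log_F(w^2)=v_t+v_r$, so $w^2=\zeta u_tu_r$ with $\zeta\in\mu_F$. Replacing $u_t$ by the pre-image $\zeta u_t$ gives $\sqrt{u_tu_r}=w\in F$.
\item Conversely, if $\sqrt{u_tu_r}\in F$, then $\Log_F$ of it equals $v$, so $v\in\Lambda_F$.
\end{itemize}

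The quantifier "for any choice" in the statement is handled exactly by this freedom to absorb $\zeta$ into the pre-image.

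\textbf{Minkowski units.} By \Cref{prop: unit structure}(ii), if $v\in\Lambda_F$ then any pre-image of $v$ is a Minkowski unit. Conversely, suppose $\epsilon$ is a Minkowski unit. Its conjugates lying in $F$ generate $E_F$. Their logs are permuted within $\Lambda_F$ only through the action of $H$ and possibly other embeddings of $F$ into $F$. The main obstacle is arguing that if a single Galois orbit spans $\Lambda_F$, then $\Lambda_F$ is cyclic as a $\Z[H]$-module, i.e.\ $\Lambda_F\cong A$.

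I would first try to show directly that $C_\epsilon\cap F=\{\epsilon,\epsilon^\gamma\}$ up to torsion, using that $\mathrm{Aut}(F)=H$ here because $F$ is non-Galois. The logs of the conjugates in $F$ are then just $\Log_F(\epsilon)$ and $\gamma\Log_F(\epsilon)$. If these generate $\Lambda_F$, then $\Lambda_F=A\cdot\Log_F(\epsilon)$ is cyclic, and in case (i) it is not: $\Lambda'/(\gamma-1)\Lambda'\cong\Z\oplus\Z/2$ is not cyclic. So a Minkowski unit forces case (ii), hence non-orthogonality.

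\textbf{Well-rounded implies a Minkowski unit.} If $\Lambda_F$ is well-rounded, it is not orthogonal. An orthogonal well-rounded lattice would be square, which would again force $\|v_t\|=\|v_r\|$, excluded above. So $F$ has a Minkowski unit.
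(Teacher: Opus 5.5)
Your argument is correct and shares the paper's skeleton. You use the dichotomy of \Cref{prop: unit structure} and \Cref{cor: reduced basis}, and the inner products showing that $\{v_t,v\}$ and $\{v_r,v\}$ are never orthogonal. You then observe that an orthogonal $\{v,\gamma v\}$, or an orthogonal well-rounded lattice in case (i), forces $\|v_t\|=\|v_r\|$.

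At that point the paper says the shape would be the algebraic point $i$ and invokes \Cref{theorem: trascendental point}. You instead go straight to \Cref{lem:embedding} and \Cref{prop: Qbarindependence}: since $\|v_t\|^2=6a^2$ and $\|v_r\|^2=2b^2$, equality would give the $\overline{\Q}$-linear relation $\log|\rho_2(u_r)|=\mp\sqrt3\,\log|\rho_2(u_t)|$. This needs \Cref{prop: Qbarindependence} only for the specific pair $(u_t,u_r)$, which is exactly what it provides, and avoids the detour through transcendence of the shape point. Your root-of-unity bookkeeping (absorbing $\zeta$ into $u_t$) likewise replaces the appeal to \Cref{cor:unit-basis}; the two are equivalent.

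You also go further than the paper on the Minkowski-unit reformulation. The paper's proof never argues that the existence of a Minkowski unit forces $v\in\Lambda_F$. Your observation settles it neatly: $\Lambda'/(\gamma-1)\Lambda'\cong\Z\oplus\Z/2\Z$ is not cyclic, so $\Lambda'$ is not a cyclic $\Z[H]$-module.

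One step needs tightening. The fact $\mathrm{Aut}(F)=H$ yields $C_\epsilon\cap F=\{\epsilon,\gamma(\epsilon)\}$ only when $\Q(\epsilon)=F$. For instance, in the $D_4$ case with $\epsilon\in K$, the set $C_\epsilon\cap F$ contains the $K$-conjugate of $\epsilon$, which is not $\gamma(\epsilon)=\epsilon$. So add the following: if $L=\Q(\epsilon)\subsetneq F$, then $\epsilon$ is not a Minkowski unit. In all these families $L$ is the only subfield of $F$ of its degree, so every conjugate of $\epsilon$ lying in $F$ lies in $L$, and these span a subgroup of rank at most $\rk(\Lambda_L)\le 1<2$.

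Also make explicit why $\mathrm{Aut}(F)=H$. The order $|\mathrm{Aut}(F)|$ divides $[F:\Q]\in\{4,6\}$. It is smaller than $[F:\Q]$ because $F$ is not Galois. It is even because it contains $H$. Together these force $|\mathrm{Aut}(F)|=2$.
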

\begin{proof}
   Suppose $\sqrt{u_tu_r}\notin F$ for any choice of pre-images  $u_t\in \Log_{F}^{-1}(\{v_t\})$ and $u_r\in \Log_{F}^{-1}(\{v_r\})$. Then  \Cref{cor:unit-basis}, followed by \Cref{cor: reduced basis}, implies that $\Lambda_F$ is orthogonal. 
   
   Conversely, assume that $v=(v_t+v_r)/2\in \Lambda_F$. Then \Cref{cor: reduced basis} shows that one of $\mathcal{B}_1\coloneq\{v_t,v\}$, $\mathcal{B}_2\coloneq\{v_r,v\}$, or $\mathcal{B}_3\coloneq\{v,\gamma v\}$ is a basis achieving the successive minima of~$\Lambda_F$. Assume by contradiction that $\Lambda_F$ is orthogonal. Then its basis consisting of successive minima must also be orthogonal. By computing the corresponding inner products, it is easy to see that neither~$\mathcal{B}_1$ nor~$\mathcal{B}_2$ are orthogonal; hence $\mathcal{B}_3$ must achieve the successive minima of~$\Lambda_F$. Since
   $\mathcal{B}_3$ is an orthogonal basis of $\Lambda_F$ consisting of vectors of the same length, it is easy to verify that $[\Lambda_F]=1\in \mathcal{S}_2$, contradicting \Cref{theorem: trascendental point}. 
   
   It remains to prove that if $\Lambda_F$ is well-rounded, then $F$ has a Minkowski unit. Note that the only well-rounded orthogonal lattice corresponds to the point $0+i\in \mathcal{S}_2$, which is the unique point in the intersection of the left boundary and the lower arc. Since $i$ is an algebraic number, we have $[\Lambda_F]\neq i$ by \Cref{theorem: trascendental point}. It follows that if $\Lambda_F$ is well-rounded, then $\Lambda_F$ is not orthogonal, and therefore $F$ has a Minkowski unit.
\end{proof}

\subsection{Interior rank 2 unit shapes}
We conclude this section with the proof of part 3 of \Cref{theorem: classification or rank 2}. This proof is contingent on the algebraic independence of logarithms, also known as the \emph{weak Schanuel Conjecture}.

\begin{conjecture}[Weak Schanuel Conjecture; \cite{Waldschmidt2023}, Conjecture 39.3.1] \label{conj: algebraic independence of logs}
    Let $\lambda_1,\dots,\lambda_m$ be $\Q$-linearly independent complex numbers such that the complex numbers $\alpha_i=e^{\lambda_i}$ are algebraic numbers. Then $\lambda_1,\dots,\lambda_m$ are algebraically independent. 
\end{conjecture}

We also require the notion of generic position from algebraic geometry.
\begin{definition}
     Let $\mathrm{Sym}_2(\C)$ be the algebraic variety of symmetric $2\times 2$ matrices over~$\C$. A point $A\in \mathrm{Sym}_2(\C)$ is said to be in \emph{$\overline{\Q}$-generic position} if $A$ is not contained in a closed proper subvariety of $\mathrm{Sym}_2(\C)$ defined over $\overline{\Q}$.
\end{definition}

We now have all the ingredients to prove part 3 of \Cref{theorem: classification or rank 2}. 

\begin{proposition}\label{prop: shape_space_totally_real}
 Assume Conjecture \ref{conj: algebraic independence of logs}. Let $F_3$ be a non-Galois totally real cubic field and let $F$ be a totally imaginary sextic field containing $F_3$. Let $A_{F_3}$ and $A_F$ denote arbitrary Gram matrices of $\Lambda_{F_3}$ and $\Lambda_F$, respectively. Then $A_{F_3}$ and $A_F$ are in $\overline{\Q}$-generic position in $\mathrm{Sym}_2(\C)$. It follows that for the complex numbers $z=x+iy$ and $z_{F_3}=x_{F_3}+iy_{F_3}$ representing $[\Lambda_F]$ and $[\Lambda_{F_3}]$ in $\mathcal{S}_2$, respectively, no proper closed subvariety of $\C^2$ defined over $\overline{\Q}$ contains the point $(x,y)$ or $(x_{F_3}, y_{F_3})$, which implies \textit{part 3} of \Cref{theorem: classification or rank 2}. 
\end{proposition}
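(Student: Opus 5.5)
First I reduce from $F$ to $F_3$. The totally real cubic $F_3$ has unit rank $2$, and $F$ is a totally imaginary sextic, so it also has unit rank $2$. The extension $F/F_3$ has degree $2$, and $F$ is a totally imaginary quadratic extension of a totally real field, so it is a CM extension. By \Cref{prop: CMsublattice vs lattice} and Remak's bound used in \Cref{prop: the shape is weak}, $\Lambda_{F_3}$ (rescaled by $2$) is a sublattice of $\Lambda_F$ of index $1$ or $2$. Hence $A_F = c\,P^TA_{F_3}P$ for a rational matrix $P\in \gl_2(\Q)$ and a rational scalar $c$. The map $A\mapsto cP^TAP$ is a $\Q$-rational automorphism of $\mathrm{Sym}_2(\C)$, so it preserves $\overline{\Q}$-generic position. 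Changing the basis of $\Lambda_{F_3}$ by $\gl_2(\Z)$ is handled the same way. It therefore suffices to treat one Gram matrix of $\Lambda_{F_3}$, built from a fundamental system $\epsilon_1,\epsilon_2$ of units.

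Next I write the entries in terms of logarithms. Let $\sigma_1,\sigma_2,\sigma_3$ be the real embeddings and set $\ell_{ij}=\log|\sigma_j(\epsilon_i)|$ for $j=1,2$, so that $\ell_{i3}=-\ell_{i1}-\ell_{i2}$. Every entry of the Gram matrix is then a $\Z$-quadratic form in the four numbers $\ell_{11},\ell_{12},\ell_{21},\ell_{22}$; for instance $\langle v_1,v_1\rangle=2(\ell_{11}^2+\ell_{12}^2+\ell_{11}\ell_{12})$. The key claim is that these four numbers are $\Q$-linearly independent. Granting that, Conjecture~\ref{conj: algebraic independence of logs} makes them algebraically independent, since each is a logarithm of the algebraic number $|\sigma_j(\epsilon_i)|$ (which is $\pm\sigma_j(\epsilon_i)$). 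Now suppose a nonzero polynomial $f$ over $\overline{\Q}$ in the three coordinates of $\mathrm{Sym}_2$ vanishes at $A_{F_3}$. Then $f\circ q$ vanishes at $(\ell_{ij})$, where $q:\C^4\to\mathrm{Sym}_2(\C)$ is the quadratic Gram map. So I must check that $f\circ q\neq 0$, i.e.\ that $q$ is dominant. For this it suffices that its image contains an open set, which holds because every positive definite real $2\times2$ matrix is the Gram matrix of two vectors in the trace-zero plane. Algebraic independence then gives a contradiction.

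The main obstacle is the $\Q$-linear independence of the $\ell_{ij}$; this is exactly where the non-Galois hypothesis enters. I would argue as in \Cref{prop: Qbarindependence}, passing to the Galois closure $\tilde F_3$ with group $S_3$. A rational relation among the $\ell_{ij}$ would give a multiplicative relation among the conjugates $\sigma_j(\epsilon_i)$, up to sign. These conjugates generate the unit group of $\tilde F_3$ modulo torsion up to finite index. That group has rank $5$, coming from the representation $\Q\otimes E_{\tilde F_3}\cong \mathrm{Ind}_{C_2}^{S_3}\mathbf 1 - \mathbf 1$, which is $2\cdot\mathrm{std}\oplus\mathrm{sgn}$ (for a real $S_3$ sextic). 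Counting, the six conjugates of $\epsilon_1,\epsilon_2$ satisfy only the two norm relations, so they span rank $4$, and no further relation exists; this uses Minkowski-type results such as \Cref{thm: existence of weak mink unit} applied to $\tilde F_3$. Since $\sigma_3$ is determined by the other two, the four values for $j=1,2$ are independent. I expect this step to require the most care.

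Finally, I transfer the result to the shape. The point $(x,y)$ is obtained from the reduced Gram matrix by $x=a_{12}/a_{11}$ and $y=\sqrt{\det A/a_{11}^2}$. So any $\overline{\Q}$-polynomial relation in $(x,y)$ can be squared into a relation in the entries of $A$, contradicting generic position. Interiority follows because each of the three boundary pieces is $\overline{\Q}$-algebraic, and transcendence of $x+iy$ follows similarly.
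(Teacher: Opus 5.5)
Your plan is sound and reaches the statement by a route genuinely different from the paper's. The one step you flagged, however, is argued incorrectly as written.

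\textbf{Comparison with the paper.} The paper takes a weak Minkowski unit $u$ of $\tilde F_3$ (\Cref{thm: existence of weak mink unit}). This makes five of the six numbers $\log|\rho(g(u))|$ $\Q$-linearly independent for free, hence algebraically independent under \Cref{conj: algebraic independence of logs}. It then writes the Gram matrix of the units $u^{1+(12)},u^{(1+(12))(13)}\in F_3$ as $\varphi(v)$, where the dominance of $\varphi:\mathbb{H}^5\to\mathbb{A}^3$ is checked by computer algebra, and finishes by rational changes of basis.

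You instead work directly with a fundamental system of $F_3$ and only four logarithms, and you prove dominance of the Gram map $q$ conceptually via the positive-definite cone. The two arguments are complementary. From its formula, $\varphi=q\circ L$ with
\[
L(x)=(x_1+x_2,\;x_3+x_5,\;-(x_1+x_2+x_4+x_5),\;x_1+x_4),
\]
a rational linear map of rank $4$. So your dominance argument gives a computer-free proof of the paper's claim. Conversely, the paper's choice of generic point makes linear independence automatic, while yours forces you to prove it separately. Your reduction from $F$ to $F_3$ and your passage from Gram matrices to $(x,y)$ agree with the paper's.

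\textbf{The independence step.} Three things are wrong as written.
\begin{enumerate}
\item $\mathrm{Ind}_{C_2}^{S_3}\mathbf 1-\mathbf 1$ is $\mathrm{std}$, not $\mathrm{sgn}\oplus2\,\mathrm{std}$. Since $\tilde F_3$ is totally real, the correct statement is $V:=\Q\otimes E_{\tilde F_3}\cong\Q[S_3]/\Q N\cong\mathrm{sgn}\oplus 2\,\mathrm{std}$. This follows from a weak Minkowski unit by counting dimensions.
\item The six conjugates $\sigma_j(\epsilon_i)$ do \emph{not} generate a finite-index subgroup of $E_{\tilde F_3}$. They miss the $\mathrm{sgn}$-component, which comes from the units of $\Q(\sqrt{\Delta_{F_3}})$.
\item ``They satisfy only the two norm relations'' is exactly what has to be proved; counting does not give it.
\end{enumerate}

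A correct argument goes as follows. Let $H=\Gal(\tilde F_3/F_3)$, generated by a transposition. Then $\mathrm{sgn}^H=0$ and $\dim\mathrm{std}^H=1$, so the $2$-dimensional space $\Q\otimes E_{F_3}\subseteq V^H$ lies in the $\mathrm{std}$-isotypic part. Its $S_3$-span $U$, which is the span of the six conjugates, therefore satisfies $\dim U^H\ge 2$. This forces $U\cong 2\,\mathrm{std}$, of dimension $4$. Because $\tilde F_3$ is totally real, $w\mapsto\log|\rho(w)|$ is injective on $V$. Hence the six numbers $\log|\sigma_j(\epsilon_i)|$ span a $4$-dimensional $\Q$-space, and $\ell_{11},\ell_{12},\ell_{21},\ell_{22}$ are independent.

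Alternatively, the four $\ell$-values of the paper's two units are $L$ applied to five independent logarithms, so they are independent because $L$ has full row rank. A rational change of basis then transfers this to $\epsilon_1,\epsilon_2$.

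\textbf{The closing clause.} Neither your proposal nor the paper checks the claim that this yields part 3 of \Cref{theorem: classification or rank 2}. That requires the real cubic subfield of every CM $G$-sextic to be non-Galois.
\begin{itemize}
\item This holds for $D_6$, $S_4$ and $S_4\times C_2$, none of which has a $C_3$ quotient.
\item It fails for $A_4\times C_2$. There the Sylow $2$-subgroup $V_4\times C_2$ is normal, so the cubic subfield is cyclic. Then $\Lambda_F\supseteq 2\Lambda_{F_3}$ with finite index and $\Lambda_{F_3}$ hexagonal, so the shape is algebraic.
\end{itemize}
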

\begin{proof}

We identify~$\Lambda_{F_3}$ with $\Log_{F}(O_{F_3}^{\times})$ via \Cref{prop: CMsublattice vs lattice}. First, note that for any Gram matrix of $\Lambda_{F_3}$, there exists a rational change of basis that produces a Gram matrix for~$\Lambda_F$. Therefore, $A_F$ is in $\overline{\Q}$-generic position if and only if $A_{F_3}$ is also in $\overline{\Q}$-generic position.

To prove that $A_{F_3}$ is in $\overline{\Q}$-generic position, let $\mathbb{H}^m\subseteq \mathbb{A}^{m+1}$ be the hyperplane defined by the equation $x_1+\dots+x_{m+1}=0$, and identify $\mathrm{Sym}_2(\C)$ with $\mathbb{A}^3$. Let $\varphi:\mathbb{H}^5\rightarrow \mathbb A^3$ be the following composition of maps:
\[\begin{tikzcd}
	\varphi:{\mathbb{H}^5} & {\mathbb{H}^2\times \mathbb{H}^2} & {\mathbb A^3} \\
	\begin{array}{c} \begin{bmatrix}x_1\\x_2\\x_3\\x_4\\x_5\\-\sum_{i=1}^5x_i\end{bmatrix}  \end{array} & (v_1, v_2)  & \begin{bmatrix}
    \langle v_1,v_1\rangle & \langle v_1,v_2\rangle\\
    \langle v_1,v_2\rangle & \langle v_2,v_2\rangle 
\end{bmatrix} ,
	\arrow[maps to, from=1-1, to=1-2]
	\arrow["{[\langle\cdot,\cdot\rangle]_{ij}}", from=1-2, to=1-3]
	\arrow[maps to, from=2-1, to=2-2]
	\arrow[maps to, from=2-2, to=2-3]
\end{tikzcd}\]
where 
\[ v_1 = \begin{bmatrix}x_1+x_2\\x_3+x_5\\-(x_1+x_2+x_3+x_5)\end{bmatrix}, \quad v_2 = \begin{bmatrix}-(x_1+x_2+x_4+x_5)\\ x_1+x_4\\x_2+x_5\end{bmatrix} .  \]
One can prove, via manual symbolic computation or using SageMath \cite{sagemath}, that the image of the map $\varphi$ is Zariski-dense in $\mathbb{A}^3$ by verifying that the ideal of $\C[x_1,x_2,x_3]$ vanishing on the image of~$\varphi$ is the zero ideal $\{0\}$.

The log unit lattice $\Lambda_{\Tilde{F_3}}$ of the Galois closure $\Tilde{F_3}$ of $F_3$ is a rank $5$ lattice that is also a $\Gal(\Tilde{F_3}/\Q)$-module. Choose an embedding $\rho:\Tilde{F_3} \rightarrow \R$, and identify 
\[ \Gal(\tilde{F_3}/\Q)\cong S_3 = \{ \text{id}, (12), (13), (23), (123), (132) \}. \]
By \Cref{thm: existence of weak mink unit}, there exists a unit $u\in O_{\Tilde{F_3}}^\times$ such that the set $\mathcal{B}\coloneq\{\log|\rho(g(u))|\}_{g\in S_3\setminus \{(132)\}}$ is linearly independent, and therefore algebraically independent by \Cref{conj: algebraic independence of logs}. Let $v=\Log_{\Tilde{F_3}}(u)$. The algebraic independence of $\mathcal{B}$ over $\Q$ implies that $v$ is not properly contained in any closed subvariety of $\mathbb{H}^5$ defined over $\overline{\Q}$. Now choose an ordering of the embeddings of $\tilde{F}_3$ into $\R$ 
such that
 \[v\coloneq\left[\log|\rho(u)|,\,\log|\rho(u^{(12)})|,\,\log|\rho(u^{(13)})|,\,\log|\rho(u^{(23)})|,\,\log|\rho(u^{(123)})|,\,\log|\rho(u^{(132)})|\right]^T.\]
It follows that the Gram matrix $M_{F_3}$ associated to the $\Z$-linearly independent set  
\[\{\Log_F(u^{1+(12)}),\Log_F(u^{(1+(12))(13)})\}\subseteq \Lambda_{F_3}\]  
satisfies $M_{F_3}=\varphi(v)$. We now assert that $M_{F_3}\notin Z$ for any Zariski closed subset $Z\subseteq \mathbb A^3$ defined over $\overline{\Q}$. To prove this assertion, note that if such a set $Z$ existed, then $\varphi^{-1}(M_{F_3})$ would be contained in $\varphi^{-1}(Z)$, which is impossible since $v$ does not belong to any Zariski-closed proper subset of $\mathbb{H}^5$ defined over $\overline{\Q}$. Since $A_{F_3}$ is related to $M_{F_3}$ by a rational change of basis, we conclude that any Gram matrix of $\Lambda_{F_3}$ is in $\overline{\Q}$-generic position in $\mathrm{Sym}_2(\C)$. 

Next, we prove that the point $(x,y)\in \C^2$ such that $z=x+iy$ represents the unit shape $[\Lambda_F]$ in $\mathcal{S}_2$ is not contained in any closed proper subvariety of $\C^2$ defined over $\overline{\Q}$. The proof of the analogous result for $[\Lambda_{F_3}]$ follows the same strategy, so we omit it.
 Choose a basis of $\Lambda_F$ such that the corresponding Gram matrix $A_F$  of $\Lambda_F$ has the form:
    \[
    \alpha\begin{bmatrix}
        1&x\\
        x&x^2+y^2
    \end{bmatrix}, \quad x+iy\in \mathcal S_2,\,\alpha\in \R_{>0}.
    \]
    Since $A_F$ is in $\overline{\Q}$-generic position in $\mathrm{Sym}_2(\C)$, the complex numbers $\alpha$, $x$, and $y$ must be algebraically independent over $\Q$. From this observation, we obtain that the point $(x,y)$ does not lie in any closed proper subvariety defined over $\overline{\Q}$ of $\C^2$. This concludes the proof.
\end{proof}


\section{Shapes of totally imaginary $D_6$ sextic number fields} \label{sec:shapesD6}
In this section we specialize to $\Omega(D_6, 0, 3)$, the set of unit shapes of totally imaginary $D_6$-sextic number fields, which by \Cref{theorem: classification or rank 2} splits into the disjoint CM and non-CM subfamilies $\Omega_{\mathrm{CM}}(D_6, 0, 3)$ and $\Omega_{\mathrm{non\text{-}CM}}(D_6,0,3)$, lying in the interior and boundary of $\mathcal{S}_2$ respectively. We address two natural questions about this family.

The first concerns the strength of the unit shape as an invariant: to what extent does the unit shape distinguish isomorphism classes within $\mathcal{F}(D_6,0,3)$? The CM and non-CM subfamilies exhibit strikingly different behavior. In the CM case, infinitely many non-isomorphic fields share the same unit shape, so the invariant carries little distinguishing power (\Cref{prop: real cubic shapes are CM shapes}). In the non-CM case, the unit shape turns out to be a complete invariant, separating isomorphism classes entirely (\Cref{theorem: Completeness of the shape}).

The second concerns the geometry of the shape space: what does $\Omega(D_6,0,3)$ look like as a subset of $\mathcal{S}_2$? Building on the location results of \Cref{theorem: classification or rank 2}, we go further and show that $\Omega(D_6,0,3)$ is non-discrete in $\mathcal{S}_2$ (see \Cref{theorem: limitpoint}), establishing in particular that each subfamily contains infinitely many distinct shapes. This is a first step toward the more ambitious goals of understanding the density or equidistribution of unit shapes within $\mathcal{S}_2$.


\subsection{$D_6$-sextic CM fields}
This family of fields falls under the scenario covered in \Cref{section: CM-Fields}. The unique cubic subfield of any field $F \in \mathcal{F}(D_6,0,3)$ is real, i.e.\ has signature $(3,0)$ and hence unit rank 2. We saw in \Cref{prop: the shape is weak} there exist only finitely many CM-extensions $F/K$ such that $[\Lambda_K]\neq [\Lambda_F]$. As a result, the space of unit shapes of totally real cubic fields with Galois group $S_3$ embeds into that of $D_6$-number fields with signature $(0,3)$, allowing the transfer of structural information between the two families.

\begin{proposition}\label{prop: real cubic shapes are CM shapes}
   For any totally real $S_3$-cubic field $K$, there exist infinitely many CM $D_6$-sextic fields containing $K$. Moreover, for all 
   but finitely many $F\in\mathcal{F}_{\CM}(D_6,0,3)$ with $K\subseteq F$, we have $[\Lambda_F]=[\Lambda_K]$. In particular, the family $\Omega(S_3,3,0)$ is contained in $\Omega_{\CM}(D_6,0,3)$.
  \end{proposition}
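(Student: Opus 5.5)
The plan has three parts: construct infinitely many CM sextics over $K$, check that each has Galois group $D_6$, and then invoke \Cref{prop: the shape is weak} for the shapes.

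\textbf{Construction.} Fix a totally real $S_3$-cubic field $K$. For each squarefree $d\in\Z_{>0}$, set $F_d = K(\sqrt{-d})$. Since $K$ is totally real and $-d$ is totally negative, $F_d/K$ is a CM extension, and $F_d$ is a totally imaginary sextic field.

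Distinct squarefree $d$ give distinct fields $F_d$. Indeed, the quadratic subfields of $F_d$ can be read off from the Galois closure $\tilde F_d = \tilde K(\sqrt{-d})$, where $\tilde K = K(\sqrt{\Delta_K})$ with $\sqrt{\Delta_K}$ real. The only quadratic subfield of $F_d$ itself is $\Q(\sqrt{-d})$, because $K$ contains no quadratic subfield, so $\sqrt{\Delta_K}\notin F_d$. Hence $F_d\cong F_{d'}$ forces $\Q(\sqrt{-d})=\Q(\sqrt{-d'})$, i.e.\ $d=d'$. This gives infinitely many pairwise non-isomorphic CM sextic fields containing $K$.

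\textbf{Galois group.} Next I would show $\Gal(\tilde F_d/\Q)\cong D_6$. The field $\tilde F_d$ is the compositum of $\tilde K$, which has group $S_3$ and is totally real, with the imaginary quadratic field $\Q(\sqrt{-d})$. These intersect trivially: the unique quadratic subfield of $\tilde K$ is real, whereas $\Q(\sqrt{-d})$ is imaginary. Therefore
\[
\Gal(\tilde F_d/\Q)\cong S_3\times C_2\cong D_6 .
\]
The remaining check is that $F_d$ really has Galois closure $\tilde F_d$ and degree $6$. This holds because $F_d$ is not Galois and its normal closure must contain both $\tilde K$ and $\sqrt{-d}$. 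So $F_d\in\mathcal F_{\CM}(D_6,0,3)$.

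\textbf{Shapes.} For the ``moreover'' statement, I would apply \Cref{prop: the shape is weak} directly: it leaves only finitely many CM extensions $F/K$ with $[\Lambda_F]\neq[\Lambda_K]$. Every $F\in\mathcal F_{\CM}(D_6,0,3)$ containing $K$ is a CM extension of $K$. To see this, $K$ is the maximal totally real subfield, because $F$ has degree $6$ and contains $K$ of index $2$.

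Since infinitely many such $F$ exist, at least one satisfies $[\Lambda_F]=[\Lambda_K]$. As $K$ was an arbitrary totally real $S_3$-cubic, this gives $\Omega(S_3,3,0)\subseteq\Omega_{\CM}(D_6,0,3)$.

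Alternatively, one can produce explicit witnesses with \Cref{prop:CMfields} rather than relying on finiteness. Take $d$ squarefree with $d\neq1,3$ modulo squares in $K$, and with $\mu_{F_d}$ of order at most $6$. The degree $[K:\Q]=3$ is odd, so the hypotheses of \Cref{prop:CMfields} hold.

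The main obstacle is the Galois-group verification, namely confirming that the compositum is exactly $D_6$ and not a smaller group. This reduces to the linear disjointness of $\tilde K$ and $\Q(\sqrt{-d})$, which the reality argument above settles.
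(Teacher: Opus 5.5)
Your proposal is correct and follows essentially the same route as the paper: take $F_d=K(\sqrt{-d})$, identify its Galois group as $D_6$, and then apply \Cref{prop: the shape is weak}. For the Galois group you argue directly via linear disjointness of the totally real $\tilde K$ and the imaginary $\Q(\sqrt{-d})$, where the paper cites Cohen. One small fix in your distinctness step: $\sqrt{\Delta_K}\notin F_d$ does not follow from $K$ having no quadratic subfield; the reason is that otherwise $F_d\supseteq K(\sqrt{\Delta_K})=\tilde K$, which would force the totally imaginary $F_d$ to equal the totally real $\tilde K$.
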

\begin{proof}
   Let $K$ be a totally real $S_3$-field. Then there are infinitely many squarefree positive integers $d$ such that the CM field $F = K(\sqrt{-d})$ of degree 6 is not Galois over $\Q$. Each of these fields $F$  contains a quadratic subfield and a cubic subfield with Galois group $S_3$. By \cite[Remark, p.~331]{cohen1993course}, the Galois group of the Galois closure of \(F/\mathbb{Q}\) is isomorphic to \(D_6\).  We conclude that there are infinitely many CM $D_6$-sextic fields containing $K$. By \Cref{prop: the shape is weak}, only finitely many of these fields satisfy $[\Lambda_F] \neq [\Lambda_{K}]$. In particular, $[\Lambda_K] \in \Omega_{\CM}(D_6,0,3)$, and therefore $\Omega({S_3},0,3) \subset \Omega_{\CM}(D_6,0,3)$. 
   \end{proof}

\begin{remark}\label{remark: cubics have limit points}
By \Cref{prop: real cubic shapes are CM shapes} we can view $\Omega(S_3,0,3)$ as a subspace of $\Omega(D_6,0,3)$, both endowed with the topology induced from $\mathcal{S}_2$. 
\end{remark}


\subsection{$D_6$-sextic non-CM fields}

We now turn to $\mathcal{F}_{\non-CM}(D_6,0,3)$, the family of non-CM $D_6$-sextic number fields. The unique cubic subfield of any field in this family is complex, i.e.\ has signature $(1,1)$) and hence unit rank 1. 
In \Cref{theorem: Completeness of the shape}, we establish that in contrast to the CM setting, fields in this family are classified, up to isomorphism, by their unit shape. 
As an auxiliary step, in \Cref{prop: DecompositionOfUnitGroup}, we decompose the $D_6$ representation $\Q\otimes_\Z\Lambda_{\Tilde{F}}$, where $\Tilde{F}$ is the Galois closure of some $F\in \mathcal{F}_{\non-CM}(D_6,0,3)$, into irreducible subrepresentations. Finally, in \Cref{theorem: limitpoint}, we exhibit a sequence in $\Omega_{\non-CM}(D_6,0,3)$ that converges to the hexagonal lattice in $\mathcal{S}_2$, thereby proving that the set $\Omega_{\non-CM}(D_6,0,3)$ is non-discrete. A remarkable property of this sequence
is that even though it converges to a well-rounded lattice, none of the members of the sequence are themselves well-rounded.

Throughout, we use the presentation
    \[  D_6 = \langle \sigma, \tau : \sigma^6 = \tau^2 = (\sigma\tau)^2 = 1 \rangle   \]
along with the following notation, illustrated in \Cref{fig: fielddiagram}. Let $F$ be a totally imaginary $D_6$-sextic field with Galois closure $\tilde{F}$. Let~$F_3$ and~$F_2$ be the complex cubic and imaginary quadratic subfields of $F$, respectively. We let $\Tilde{F_3}$ be the Galois closure of $F_3$, and $R_2$ the quadratic resolvent field of $F_3$ which lies inside $\Tilde{F}_3$ and is imaginary. Note that $\Gal(\Tilde{F_3}/\Q)\cong S_3$. We denote by $F_4$ the unique quartic subfield of $\Tilde{F}$; it contains $F_2$, $R_2$ and a third real quadratic subfield $K_2$ that is generated by the product of the generators of $F_2$ and $R_2$.

The group $D_6$ has three conjugacy classes of elements of order 2, namely $[\sigma^3], [\tau]$ and $[\sigma\tau]$, which correspond to three conjugacy classes of sextic fields in $\Tilde{F}$. It is clear that $\Tilde{F_3}=\mathrm{Fix}\langle\sigma^3\rangle$, the fixed field of $\langle\sigma^3\rangle$. The other two conjugacy classes are non-Galois sextics, and we may assume without loss of generality that $F=\mathrm{Fix}\langle\sigma\tau\rangle$. We also set $K_6\coloneq\mathrm{Fix}\langle\tau\rangle$. With these choices, $K_6$ contains a cubic subfield $K_3$ that is conjugate but not equal to $F_3$. Note that $K_6$ also contains the quadratic field $K_2$. 

\begin{figure}[ht]
    \centering
\[\begin{tikzcd}
	&&& {\Tilde{F}} \\
	\\
	{K_6} &&& {\Tilde{F_3}} &&& F \\
	&&&& {F_4} \\
	& {K_3} &&& {F_3} \\
	{K_2} &&& {R_2} &&& {F_2}
	\arrow["{\langle\tau\rangle}"', no head, from=1-4, to=3-1]
	\arrow["{\langle \sigma^3\rangle}"', no head, from=1-4, to=3-4]
	\arrow["{\langle\sigma\tau\rangle}", no head, from=1-4, to=3-7]
	\arrow["{\langle \sigma^2\rangle}", no head, from=1-4, to=4-5]
	\arrow[no head, from=3-1, to=5-2]
	\arrow[no head, from=3-1, to=6-1]
	\arrow[no head, from=3-4, to=5-2]
	\arrow[no head, from=3-4, to=5-5]
	\arrow[no head, from=3-4, to=6-4]
	\arrow[no head, from=3-7, to=5-5]
	\arrow[no head, from=3-7, to=6-7]
	\arrow[dotted, no head, from=4-5, to=6-1]
	\arrow[dotted, no head, from=4-5, to=6-4]
	\arrow[dotted, no head, from=4-5, to=6-7]
\end{tikzcd}\]
    \caption{Subset of the subfield diagram for $\Tilde{F}$, the Galois closure of $F \in \mathcal{F}(D_6, 0, 3)$. }
    \label{fig: fielddiagram}
\end{figure}

Since $F_3$ is complex, we may choose an embedding $\rho:\Tilde{F}\hookrightarrow \C$ such that $\tau$ acts as complex conjugation on $\Tilde{F}$. It follows that for any $u\in \Tilde{F}$ we have $|\rho(u)|^2=\rho(u^{1+\tau})$. This matches our use of the notation $\tau$ in \Cref{subsection: transcendentality}.

\subsubsection{The representation $\Q\otimes_\Z\Lambda_{\Tilde{F}}$}

As in \Cref{remark: sublattice vs lattice}, for every extension of number fields $L/M$, we will always view $\Lambda_M$ as a sublattice of $\Lambda_L$ and write $\Lambda_M\subset\Lambda_L$, even though we may have $[\Lambda_M]\neq [\Log_L(O_M^\times)]$. For $F \in \mathcal{F}_{\non-CM}(D_6,0,3)$, we will also view the inner product space $\Q\otimes_\Z\Lambda_{\Tilde{F}}$ as a module over $\Q[D_6]$, or equivalently, as a representation of the group $D_6.$ It will be useful to note that $\Q\otimes_\Z \Lambda_{F_4}=\Q\otimes_\Z \Lambda_{K_2}$; this is true since both $\Lambda_{F_4}$ and $\Lambda_{K_2}$ are rank 1 lattices and $\Lambda_{K_2}\subseteq \Lambda_{F_4}$. Note that the extension $F/F_3$ is Galois; more specifically, we have
\begin{equation}\label{eq: fixedFields}
    F=\mathrm{Fix}\langle \sigma\tau\rangle,\quad \text{and} \quad \Gal(F/F_3)=\langle \tau\sigma^2|_F\rangle.
\end{equation}

Before decomposing the representation $\Q\otimes_\Z\Lambda_{\Tilde{F}}$, we must first understand the representation $\Q\otimes_\Z\Lambda_{\Tilde{F}_3}$.

\begin{lemma}\label{lem: the representation}
    Let $W_6\coloneq\Q\otimes_\Z\Lambda_{\Tilde{F_3}}$. The action of  $\Gal(\Tilde{F}/\Q)$ on $\Lambda_{\Tilde{F_3}}$ yields an irreducible orthogonal representation 
    $\Tilde{\mathbf{s}}:\Gal(\Tilde{F}/\Q)\rightarrow \mathrm{GL}(W_6)$ of degree $2$.
   Moreover, we have the following commutative diagram:
    \[\begin{tikzcd}
	{D_6\cong\Gal(\Tilde{F}/\Q)} \\
	{D_6/\langle \sigma^3\rangle\cong\Gal(\Tilde{F_3}/\Q)} & {\mathrm{GL}(W_6)}
	\arrow[from=1-1, to=2-1]
	\arrow["{{\Tilde{\mathbf{s}}}}", from=1-1, to=2-2]
	\arrow["{\mathbf{s}}"', dashed, from=2-1, to=2-2]
\end{tikzcd}\]
where the representation $\mathbf{s}:D_6/\langle \sigma^3\rangle\rightarrow \mathrm{GL}(W_6)$ is isomorphic to the unique irreducible two-dimensional representation of the symmetric group $S_3$, modulo isomorphism. 
\end{lemma}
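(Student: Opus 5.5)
The plan is to study the action on the log unit lattice of the Galois closure of $F_3$ directly, using the fact that $\Tilde{F_3}/\Q$ is Galois with group $S_3$. First we pin down the representation. Since $F_3$ is complex, $\Tilde{F_3}$ is a totally imaginary sextic field, so it has signature $(0,3)$ and unit rank $2$. Hence $W_6=\Q\otimes_\Z\Lambda_{\Tilde{F_3}}$ is a $2$-dimensional $\Q$-vector space.

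The group $\Gal(\Tilde{F}/\Q)\cong D_6$ acts on $O_{\Tilde{F_3}}^\times$ by restriction, because $\Tilde{F_3}/\Q$ is Galois. This action is transported to $\Lambda_{\Tilde{F_3}}$ by $\Log_{\Tilde{F_3}}$, exactly as in \Cref{subsection: representation theory section}. It permutes the three conjugate pairs of complex embeddings, so it acts by permutation matrices on $\R^3$ and is orthogonal. This gives the orthogonal representation $\Tilde{\mathbf{s}}$ of degree $2$.

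Since $\sigma^3$ generates $\Gal(\Tilde{F}/\Tilde{F_3})$, it acts trivially on $\Tilde{F_3}$, and hence on $\Lambda_{\Tilde{F_3}}$. So $\Tilde{\mathbf{s}}$ factors through $D_6/\langle\sigma^3\rangle\cong\Gal(\Tilde{F_3}/\Q)\cong S_3$, which gives $\mathbf{s}$ and the commutative diagram.

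It remains to identify $\mathbf{s}$. The irreducible $\Q$-representations of $S_3$ are the trivial one, the sign, and the standard $2$-dimensional one. It therefore suffices to show that $\mathbf{s}$ is not a sum of two $1$-dimensional characters.

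For the trivial part: by \Cref{lem:vtvrortho} and \Cref{prop: subfield in trivial part} applied to $\Tilde{F_3}/\Q$, the $S_3$-invariants of $W_6$ come from $\Lambda_\Q=0$. Hence the trivial representation does not occur.

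For the sign character, consider the subgroup $C_3=\Gal(\Tilde{F_3}/R_2)$. On any sum of sign and trivial characters this subgroup acts trivially. So such a decomposition would force $C_3$ to act trivially on $W_6$. By \Cref{prop: subfield in trivial part} this would make $\Lambda_{R_2}$ of finite index in $\Lambda_{\Tilde{F_3}}$. But $R_2$ is imaginary quadratic, so $\rk\Lambda_{R_2}=0\neq 2$, a contradiction. (Equivalently, $\Tilde{F_3}/R_2$ is not a CM extension.)

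Therefore $\mathbf{s}$ is irreducible and, having dimension $2$, is the standard representation of $S_3$. Since the quotient map $D_6\to S_3$ is surjective, $\Tilde{\mathbf{s}}$ is irreducible as well.

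The main obstacle is the exclusion of the sign-character components, which rests on the argument with the cyclic subgroup of order $3$ and $\rk\Lambda_{R_2}=0$ above. A direct alternative is the character computation: a $3$-cycle permutes the three embedding pairs cyclically, so its trace on the trace-zero plane is $-1$. This gives the character of the standard representation, since a sum of two $1$-dimensional characters would have trace $2$ there.
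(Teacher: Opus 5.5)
Your proof is correct. It rests on the same arithmetic fact as the paper's: the order-$3$ subgroup $\Gal(\Tilde{F_3}/R_2)$ cannot act trivially, because $R_2$ is imaginary quadratic. The packaging differs, however.

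The paper restricts $\mathbf{s}$ to $\langle\sigma\rangle/\langle\sigma^3\rangle\cong C_3$. It shows that $1+\sigma+\sigma^2$ annihilates $\Lambda_{\Tilde{F_3}}$, since this element acts as the norm to $R_2$, whose units are roots of unity. Hence $W_6$ is a one-dimensional $\Q(\zeta_3)$-vector space and is already irreducible under $C_3$. The identification with the standard representation then only uses that $S_3$ has a unique $2$-dimensional irreducible representation.

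You instead invoke Maschke's theorem and the list of $\Q$-irreducibles of $S_3$. You note that every $1$-dimensional character is trivial on $A_3$, and you rule out a trivial $A_3$-action using the finite-index statement of \Cref{prop: subfield in trivial part} for $\Tilde{F_3}/R_2$.

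What each route buys: the paper's gives slightly more, namely irreducibility over $C_3$ alone (so $W_6^{C_3}=0$) and a $\Q(\zeta_3)$-structure on $W_6$. Yours uses only results from \Cref{sec:background} and needs no Minkowski unit.

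Two smaller points. Your separate exclusion of the trivial character is redundant, since the $A_3$ argument already excludes every sum of $1$-dimensional characters. Your character-theoretic alternative is the most direct route of all, because $\R\otimes_\Q W_6$ is the entire trace-zero plane, on which $S_3$ acts through its permutation action on the three conjugate pairs of embeddings. To make that version complete, add one justification: the action on pairs is transitive, so each pair has a stabilizer of order $2$, and hence a $3$-cycle fixes no pair.
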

\begin{proof}
    The representation $\Tilde{\mathbf{s}}$ is orthogonal since $\Gal(\Tilde{F_3}/\Q)$ acts on $\Lambda_{\Tilde{F_3}}$ by permuting the coordinates. Given that $\Tilde{F_3}$ is the fixed field by $\langle\sigma^3\rangle$, it is clear that $\tilde{\mathbf{s}}$ factors through the quotient map $D_6\rightarrow D_6/\langle\sigma^3\rangle$. To complete the proof of the lemma, it suffices to show that the corresponding representation 
    \[
    \mathbf{s}:D_6/\langle \sigma^3\rangle\cong \Gal(\Tilde{F_3}/\Q)\rightarrow \mathrm{GL}(W_6)
    \]
    is irreducible, since $S_3$ only has one irreducible representation of degree $2$  modulo isomorphism.  To this end, we prove that the restriction of the map $\mathbf{s}$ to the subgroup $\langle\sigma\rangle/\langle \sigma^3\rangle\subset S_3$ is irreducible, which will yield the result. 
    
    By \Cref{thm: existence of weak mink unit} we can choose a unit $u\in O_{\Tilde{F_3}}^\times$ such that $u$ and its conjugates generate a subgroup of $E_{\Tilde{F_3}}$ of finite index.  It follows that $W_6=\mathrm{span}_{\Q[S_3]}(\Log_{\Tilde{F_3}}(u))$. The element $1+\sigma+\sigma^2$ annihilates~$\Lambda_{\Tilde{F_3}}$. To see this, note that $u^{1+\sigma+\sigma^2}$ lies in the imaginary quadratic subfield $R_2$ of $\Tilde{F_3}$, as $u^{1+\sigma+\sigma^2}$ is fixed by the automorphism~$\sigma$ that generates the unique subgroup of order 3 in $S_3$. It follows that $u^{1+\sigma+\sigma^2}$ a root of unity. We conclude that $W_6$ is a vector space
    over the field $\Q[\sigma]/(1+\sigma+\sigma^2)$. Since $W_6$ and the field $\Q[\sigma]/(1+\sigma+\sigma^2)$ are both two-dimensional $\Q$-vector spaces, they must be isomorphic as $\Q[\sigma]/(1+\sigma+\sigma^2)$-vector spaces; hence $\Q[\sigma]/(1+\sigma+\sigma^2)\cong W_6$ as $\left(\langle \sigma\rangle/\langle\sigma^3\rangle\right)$-representations.
    Because $\Q[\sigma]/(1+\sigma+\sigma^2)$ is an irreducible representation of the group $\langle \sigma\rangle/\langle \sigma^3\rangle$, it is also irreducible as an $S_3$-representation. This concludes the proof.
\end{proof}

\begin{proposition}\label{prop: DecompositionOfUnitGroup}
    Let $V\coloneq\Q\otimes_\Z \Lambda_{\Tilde{F}}$, and let $V_{\rel}$ be the orthogonal complement of $\Q\otimes_\Z(\Lambda_{\Tilde{F_3}}\oplus \Lambda_{F_4})$ in $V$. Then
    \[
    V=(\Q\otimes_\Z\Lambda_{{\Tilde{F_3}}})\oplus (\Q\otimes_\Z\Lambda_{F_4})\oplus V_{\rel}
    \]
   as orthogonal $D_{6}$-representations. Moreover, $\Lambda_{F/F_3}^r \subseteq V_{\rel}$, with $\Lambda_{F/F_3}^r$ as in~\Cref{lem:vtvrortho}.
 \end{proposition}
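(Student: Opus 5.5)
To prove the decomposition, I would show three things: the two subspaces $W_6\coloneq\Q\otimes_\Z\Lambda_{\Tilde{F_3}}$ and $W_4\coloneq\Q\otimes_\Z\Lambda_{F_4}$ are $D_6$-stable, they are mutually orthogonal, and hence their orthogonal complement $V_{\rel}$ is also $D_6$-stable. Since $D_6$ acts on $V$ by permuting embeddings, the representation $V$ is orthogonal, so the orthogonal complement of any subrepresentation is again a subrepresentation. The direct-sum statement will then follow once the first two facts are in hand.

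\textbf{Stability.} Both $\Tilde{F_3}$ and $F_4$ are Galois over $\Q$. Hence every $g\in D_6$ maps $O_{\Tilde{F_3}}^\times$ and $O_{F_4}^\times$ to themselves. Their images under $\Log_{\Tilde F}$ are therefore $D_6$-stable, and so are $W_6$ and $W_4$. As noted before the proposition, $W_4=\Q\otimes_\Z\Lambda_{K_2}$ is one-dimensional, while $W_6$ is the two-dimensional irreducible representation of \Cref{lem: the representation}.

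\textbf{Orthogonality of $W_6$ and $W_4$.} The cleanest route is \Cref{lem: orthogonality of relative units} with the pair $\Tilde{F_3}$ and $F_4$. Their intersection is $R_2$, and both extensions $\Tilde{F_3}/R_2$ and $F_4/R_2$ are Galois. The field $R_2$ is imaginary quadratic, so $E_{R_2}$ is trivial and every unit of $\Tilde{F_3}$ is a relative unit of $\Tilde{F_3}/R_2$ up to torsion. Indeed, the proof of \Cref{lem: the representation} shows that $u^{1+\sigma+\sigma^2}\in\mu$, and this is the relative norm to $R_2$. The corollary then gives that $\Lambda_{\Tilde{F_3}}$ is orthogonal to $\Lambda_{F_4}$ inside $\Lambda_{\Tilde F_3F_4}=\Lambda_{\Tilde F}$. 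Alternatively, one can argue representation-theoretically: $W_6$ is irreducible of dimension $2$ and $W_4$ has dimension $1$, so they are non-isomorphic. Isotypic components of an orthogonal representation are orthogonal, and this again forces $W_6\perp W_4$ and $W_6\cap W_4=0$. I would present the first argument and mention the second as a check.

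\textbf{The inclusion $\Lambda_{F/F_3}^r\subseteq V_{\rel}$.} Let $u$ be a relative unit of $F/F_3$. I need $\Log(u)\perp W_6$ and $\Log(u)\perp W_4$.
\begin{itemize}
\item For $W_4$, apply \Cref{lem: orthogonality of relative units} to $F$ and $F_4$. Their intersection is $F_2$, and both $F/F_2$ and $F_4/F_2$ are Galois. The norm $\N_{F/F_2}(u)=\N_{F_3/\Q}(\N_{F/F_3}(u))$ up to ordering of composition, more precisely a unit of $F_2$, hence a root of unity. So $u$ is a relative unit of $F/F_2$, and the corollary gives $\Log(u)\perp\Lambda_{F_4}$.
\item For $W_6$, apply the corollary to $F$ and $\Tilde{F_3}$. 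Their intersection is $F_3$, both extensions over $F_3$ are quadratic hence Galois, and their compositum is $\Tilde F$. Since $u$ is relative for $F/F_3$, we get $\Log(u)\perp\Lambda_{\Tilde{F_3}}$ in $\Lambda_{\Tilde F}$.
\end{itemize}

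\textbf{Main obstacle.} The most delicate point is the bookkeeping of the subfield intersections: $\Tilde{F_3}\cap F_4=R_2$, $F\cap F_4=F_2$, and $F\cap\Tilde{F_3}=F_3$. I would verify these from the subgroup lattice of $D_6$ using the fixed-field descriptions $F=\mathrm{Fix}\langle\sigma\tau\rangle$, $\Tilde{F_3}=\mathrm{Fix}\langle\sigma^3\rangle$ and $F_4=\mathrm{Fix}\langle\sigma^2\rangle$, by computing the subgroups generated by each pair. A second point needing care is that the embeddings $\Lambda_M\subset\Lambda_{\Tilde F}$ are compatible with the inner product of $\Lambda_{\Tilde F}$. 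Here orthogonality is only claimed inside $\Lambda_{\Tilde F}$, so the scaling discrepancies in \Cref{remark: sublattice vs lattice} are harmless.
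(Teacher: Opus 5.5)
Most of your proposal is sound. The stability argument is fine. Applying \Cref{lem: orthogonality of relative units} to the pair $(\Tilde{F_3},F_4)$ over $R_2$ is a valid alternative to the paper's argument for $W_6\perp W_4$; the paper instead uses the fact that non-isomorphic irreducible subrepresentations of an orthogonal representation are orthogonal. Your proof of $\Lambda^r_{F/F_3}\perp\Lambda_{\Tilde{F_3}}$ via the pair $(F,\Tilde{F_3})$ over $F_3$ is exactly the paper's.

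\textbf{The gap is in the step $\Lambda^r_{F/F_3}\perp\Lambda_{F_4}$.} You assert that $F/F_2$ is Galois, and it is not. We have $\Gal(\Tilde{F}/F_2)=\langle\sigma^2,\sigma\tau\rangle\cong S_3$, and $\langle\sigma\tau\rangle$ is not normal in it, since $\sigma^2(\sigma\tau)\sigma^{-2}=\sigma^5\tau$. The paper records this in \Cref{prop:Galois groups rank 2}(iv)(a): $F/F_2$ is Galois only when $G\cong C_6,S_3,S_3\times C_3$. So \Cref{lem: orthogonality of relative units} does not apply as you invoke it, because its proof rests on the restriction isomorphism $\Gal(FF'/F')\cong\Gal(F/F\cap F')$. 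Separately, your aside $\N_{F/F_2}(u)=\N_{F_3/\Q}(\N_{F/F_3}(u))$ is false; the right-hand side is $\N_{F/\Q}(u)$. Your fallback, that $\N_{F/F_2}(u)$ is a unit of the imaginary quadratic field $F_2$ and hence a root of unity, is correct.

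\textbf{How to repair it.} The conclusion is still true, because only $F_4/F_2$ needs to be Galois. That already forces $F$ and $F_4$ to be linearly disjoint over $F_2=F\cap F_4$, i.e.\ $[\Tilde{F}:F_4]=3=[F:F_2]$.
\begin{enumerate}
\item We have $\Gal(\Tilde{F}/F_4)=\langle\sigma^2\rangle$, and $1,\sigma^2,\sigma^4$ lie in distinct left cosets of $\langle\sigma\tau\rangle$ in $\langle\sigma^2,\sigma\tau\rangle$.
\item So they restrict to the three $F_2$-embeddings of $F$, giving $\N_{\Tilde{F}/F_4}(u)=\N_{F/F_2}(u)\in\mu_{F_2}$ for every $u\in O_F^\times$.
\item \Cref{lem:vtvrortho} for $\Tilde{F}/F_4$ then gives $\Log_{\Tilde{F}}(u)\perp\Lambda_{F_4}$.
\end{enumerate}
This holds for all of $\Lambda_F$, not just $\Lambda^r_{F/F_3}$.

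The paper avoids this issue by working with the real quadratic field $K_2$, using $\Q\otimes_\Z\Lambda_{K_2}=\Q\otimes_\Z\Lambda_{F_4}$. It checks directly that $\N_{\Tilde{F}/K_2}=\sum_{g\in\langle\sigma^2,\tau\rangle}g=(1+\tau+\sigma^4)(1+\tau\sigma^2)$ kills $v_r$. Once repaired, your route through $F_2$ is arguably cleaner than the paper's, and it yields the stronger statement $\Lambda_F\perp\Lambda_{F_4}$.
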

\begin{proof}
    The action of $\Gal(\Tilde{F}/\Q)$ on $W_2\coloneq\Q\otimes_\Z \Lambda_{K_2}$ of $V$ yields a one-dimensional representation. We denote the corresponding character by $\chi_2$. Note also that $W_2=\Q\otimes_\Z \Lambda_{F_4}$.
     As before, let $W_6\coloneq\Q\otimes_\Z\Lambda_{\Tilde{F_3}}$. By Lemma \ref{lem: the representation}, the representation $\Tilde{\mathbf{s}}:\Gal(\tilde{F}/\Q)\rightarrow \mathrm{GL}(W_6)$ is irreducible and not isomorphic to the representation $\chi_2$. Since the action of $\Gal(\tilde{F}/\Q)$ on $V$ is orthogonal, we obtain that $\Q\otimes_\Z\Lambda_{\Tilde{F_3}}$ and $\Q\otimes_\Z \Lambda_{F_4}$ are orthogonal vector spaces, which yields the asserted direct sum decomposition of $V$.

    Now let $v_r \in \Lambda_{F/F_3}^r$; we need to prove that $v_r\in V_{\mathrm{rel}}$. Recall from \Cref{eq: fixedFields} that $F=\mathrm{Fix}\langle\sigma\tau\rangle$, so $\sigma\tau v_r=v_r$.
    Thus, it suffices to prove that $v_{r}$ is orthogonal to $\Lambda_{\Tilde{F_3}}$ and $\Lambda_{F_4}$. 
    
    To see that $v_r$ is orthogonal to $\Lambda_{\Tilde{F_3}}$, we compute the relative norm $\N_{\Tilde{F}/\Tilde{F_3}}$ applied to $v_r$ to obtain
    \[
    (1+\sigma^3)v_r=(1+\sigma^3\sigma\tau)v_r=(1+\tau\sigma^2)v_r=0.    
    \]
    The last equality follows from the identity $\sigma^4\tau = \tau \sigma^2$ and the fact that $\N_{F/F_3}(u)=u^{1+\tau\sigma^2}$ for all $u\in O_F^\times$. The orthogonality of $v_r$ and $\Lambda_{\Tilde{F_3}}$ now follows from  \Cref{lem: orthogonality of relative units} (with $\tilde{F}_3$ in place of $F'$).
    
    To see that $v_r$ is orthogonal to $\Lambda_{F_4}$, it is sufficient to prove that $v_r$ is orthogonal to $\Lambda_{K_2}$ because $\Q\otimes_\Z \Lambda_{K_2}=\Q\otimes_\Z\Lambda_{F_4}$. The Galois group of $\tilde{F}/K_2$ is $\Gal(\tilde{F}/K_2) = \langle \sigma^2, \tau \rangle \cong S_3$. Again using the identity  $\tau \sigma^2 = \sigma^4\tau$, applying the relative norm $\N_{\tilde{F}/K_2}$ to $v_r$ yields 
    \[
    \begin{aligned}
        (1+\tau+\sigma^2+\tau\sigma^2+\sigma^4+\tau\sigma^4)v_r&=(1+\tau\sigma^2)v_r+\tau(1+\tau\sigma^2)v_r+\sigma^4(1+\tau\sigma^2)v_r\\
        &=0+0+0 = 0.
    \end{aligned}
    \]
    Once again, \Cref{lem: orthogonality of relative units} (with $F' = K_2)$ implies that $v_r$ is orthogonal to $\Lambda_{K_2}$, which proves the result.
\end{proof}

\subsubsection{The unit shape as a classifying invariant}\label{subsection: shapecompleteinvariant}

Using \Cref{prop: DecompositionOfUnitGroup}, we can now prove that conjecturally, the unit shape of any field 
$F \in \mathcal{F}_{\text{non-CM}}(D_6, 0, 3)$ uniquely determines $F$ up to isomorphism. This assertion assumes a conjecture known as the \textit{Four Exponentials Problem}; a survey of this conjecture can be found in \cite{Waldschmidt2023}.

\begin{conjecture}[See the paragraph above Conjecture 39.4.1 of \cite{Waldschmidt2023}]\label{conj: four exponentials problem}
   Let  $\alpha_1,\alpha_2, \alpha_3$ and $\alpha_4$ be logarithms of algebraic numbers. Then 
   \[
   \det\left(\begin{bmatrix}
       \alpha_1&\alpha_2\\
       \alpha_3&\alpha_4
   \end{bmatrix}\right)=0
   \]
   if and only if either the two rows or the two columns are linearly dependent over $\Q$.

\end{conjecture}

\begin{theorem}\label{theorem: Completeness of the shape}
  Let $F, F' \in \mathcal{F}_{\text{non-CM}}(D_6, 0, 3)$. 
 Assuming \Cref{conj: four exponentials problem}, we have $F\cong F'$ if and only if $[\Lambda_F]=[\Lambda_{F'}]$.
\end{theorem}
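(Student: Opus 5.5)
The direction $F\cong F'\Rightarrow[\Lambda_F]=[\Lambda_{F'}]$ is immediate, since isomorphic fields have isometric log unit lattices. For the converse, I would first reduce the equality of shapes to a single multiplicative relation among four logarithms. Take $K=F_3$ and $K'=F'_3$ in \Cref{prop: boundarytype}. By \Cref{lem:embedding}, with the choice \eqref{eq:choice of log map} we can write $v_t=(2a,-a,-a)^T$ and $v_r=(0,b,-b)^T$, and similarly $v'_t,v'_r$ with parameters $a',b'$. Then $\|v_t\|^2=6a^2$, $\|v_r\|^2=2b^2$ and $\langle v_t,v_r\rangle=0$.

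By \Cref{cor: reduced basis}, $\Lambda_F$ has either the orthogonal basis $\{v_t,v_r\}$ or one of the bases $\{v_t,v\}$, $\{v_r,v\}$, $\{v,\gamma v\}$ with $v=(v_t+v_r)/2$. In every case the point $z\in\partial\mathcal S_2$ is an explicit function of $t=3a^2/b^2$. The left boundary has $y^2$ equal to $t$ or $t^{-1}$. On the right boundary and the lower arc, $z$ is a Möbius-type expression in $t$ with rational coefficients.

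Comparing the possible cases for $F$ and $F'$, equality $z=z'$ forces a relation $t'=\lambda t^{\pm1}$ with $\lambda\in\Q_{>0}$. A mismatch between the orthogonal and non-orthogonal cases can be handled directly, since it yields an algebraic relation that contradicts \Cref{theorem: trascendental point}. Since $t$ and $t'$ are squares up to rational factors, and after checking that the transcendence of $t$ kills the remaining incompatible cases, I expect to get $q\,ab'=a'b$ or $q\,aa'=bb'$ for some $q\in\Q^\times$. I also expect $\lambda$ to be a rational square in the surviving cases, so that $q$ is rational.

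Next I would apply the Four Exponentials Conjecture. Each of $a,b,a',b'$ is, up to a rational factor, a logarithm of an algebraic number: for instance $2a=\log\rho(u_t^{1+\tau})$ with $\rho(u_t^{1+\tau})$ a positive algebraic number. So \Cref{conj: four exponentials problem} applies to the $2\times 2$ matrix with rows $(qa,\,a')$ and $(b,\,b')$, or to its analogue in the second case. Either some row or some column is $\Q$-linearly dependent. A dependence between $a$ and $b$, or between $a'$ and $b'$, contradicts \Cref{prop: Qbarindependence}. Hence we are left with $\Q$-dependences pairing parameters of $F$ with parameters of $F'$. In the first case these are $a\sim a'$ and $b\sim b'$; in the crossed case they are $a\sim b'$ and $b\sim a'$. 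The crossed case should be excluded using \Cref{prop: DecompositionOfUnitGroup}: inside the compositum of the Galois closures, a trivial-part vector and a relative vector live in different isotypic components. So a multiplicative relation between the absolute values $|\rho(\cdot)|$ would force the corresponding unit into $\mu$, which contradicts $a\neq 0$ and $b\neq0$.

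Finally, I would recover the field from the dependences. From $a\sim a'$ we get $|\rho(\epsilon)|^{2m}=|\rho(\epsilon')|^{2n}$ for fundamental units $\epsilon,\epsilon'$ of $F_3$ and $F'_3$. For a complex cubic field, $|\rho(\epsilon)|^2=\pm\eta^{-1}$, where $\eta$ is the real conjugate of $\epsilon$. So $\eta^m=\pm\eta'^n$ is an irrational algebraic number. This gives $\Q(\eta^m)=\Q(\eta'^n)$, which is a cubic field conjugate to both $F_3$ and $F'_3$, so $F_3\cong F'_3$. After conjugating, we may take $F_3=F'_3$.

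Then $b\sim b'$ gives $|\rho(u_r)|^{m}=|\rho(u'_r)|^{n}$. Both $\rho(u_r^{1+\tau})$ and $\rho(u_r'^{1+\tau})$ lie in the Galois closure, and I would use them to show that the quadratic subfields $F_2$ and $F'_2$ agree. This is the step I expect to be the main obstacle. My plan there is to show that the element $u_r^{m(1+\tau)}$ generates, over $\tilde F_3$, the field $\tilde F$ (equivalently, it determines $F_4$ and hence $F_2$), using \Cref{prop: DecompositionOfUnitGroup} to ensure that it is not contained in $\tilde F_3$. Once $F_2=F'_2$, we get $F=F_3F_2=F'_3F'_2\cong F'$.
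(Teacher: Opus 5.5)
Your proposal is correct, but it takes a different route from the paper in both halves.

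\textbf{The reduction.} The paper does not compute $z$ on $\partial\mathcal S_2$ case by case. Since $\Z v_t\oplus\Z v_r$ has index at most $2$ in $\Lambda_F$, equality of shapes gives $A\in\mathrm{GL}_2(\Q)$ and $\lambda>0$ with $A\,\mathrm{diag}(6a^2,2b^2)\,A^T=\lambda\,\mathrm{diag}(6a'^2,2b'^2)$. Transcendence of $a^2/b^2$ forces $A$ to be diagonal or antidiagonal. Eliminating $\lambda$ then yields your two determinant relations with rational $q$ directly. Your case check also works, and in fact gives $t'=t^{\pm1}$.

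\textbf{The conclusion.} The paper argues by contradiction. Assuming $F\not\cong F'$, it splits according to $[\tilde F\cap\tilde F':\Q]\in\{1,2,4,6,12\}$.
\begin{itemize}
\item When this degree is at most $6$, \Cref{prop: DecompositionOfUnitGroup} and \Cref{lem: orthogonality of relative units} make the relevant vectors $(1+\tilde\tau)\Log(u)$ pairwise orthogonal. The logarithms are then $\Q$-linearly independent, and \Cref{conj: four exponentials problem} rules out the relation.
\item The case $\tilde F=\tilde F'$ is settled by counting conjugacy classes of order-$2$ subgroups of $D_6$.
\end{itemize}

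\textbf{Your route.} You use the conjecture positively to extract $\Q$-proportionalities. You discard the crossed ones because $\Q\otimes\Lambda_{\tilde F_3}$ and the image of $V'_{\rel}$ are irreducible constituents with different kernels. You then reconstruct the field:
\begin{itemize}
\item $\eta^m=\pm\eta'^n$ recovers $F_3$;
\item $x=u_r^{m(1+\tau)}=u_r'^{n(1+\tau)}$ has a nonzero log vector in $V_{\rel}$, hence $x\in\tilde F\setminus\tilde F_3$ and $\tilde F=\tilde F_3(x)=\tilde F'$;
\item finally $F_2=F_2'$ as the imaginary quadratic subfield of $F_4$ other than $R_2$.
\end{itemize}

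\textbf{What each buys.} Your route avoids enumerating the normal subgroups of $D_6$ and shows explicitly which units pin down $F$. The paper's route avoids the field-generation step.

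One simplification: your step $a\sim a'$ is dispensable. Since $\Log(x)$ is orthogonal to both $\Q\otimes\Lambda_{\tilde F_3}$ and $\Q\otimes\Lambda_{F_4}$, $x$ lies in no proper Galois subfield of $\tilde F$. So $x\in\tilde F\cap\tilde F'$ already forces $\tilde F=\tilde F'$.
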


\begin{proof}
 If $F\cong F'$, then it is clear that $[\Lambda_F]=[\Lambda_{F'}]$. So suppose now that $[\Lambda_F]=[\Lambda_{F'}]$. Let $F_3$ and $F_3'$ be the respective cubic subfields of $F$ and $F'$. Let $v_r=\Log_F(u_r)\in \Lambda_{F}$ and $v_r'=\Log_{F'}(u_r')\in \Lambda_{F'}$ correspond to generators of the respective subgroups of relative units of $F$ and $F'$, and choose $u_t\in O_F^\times$ and $u_t'\in O_{F'}^\times$ such that $v_t=\Log_F(u_t)$ and $v_t'=\Log_{F'}(u_t')$ are generators of $\Lambda_{F/F_3}^t$ and $\Lambda_{F'/F_3'}^t$, respectively, with $\Lambda_{F/F_3}^t$ and $\Lambda_{F/F_3}^r$ as defined in ~\Cref{lem:vtvrortho}.
 
 Let $\widetilde{FF'}$ be the Galois closure of the composite field of $F$ and $F'$, and choose an embedding $\tilde{\rho}:\widetilde{FF'}\rightarrow \C$.  To avoid cumbersome notation, we henceforth write $|u|$ instead of $|\tilde{\rho}(u)|$ for any $u\in \widetilde{FF'}$. By replacing $F$ and $F'$ by one of their conjugate fields if necessary, we may assume that all the numbers $\log|u_t|,\log|u_r|,\log|u_t'|$ and $\log|u_r'|$ are non-zero. An easy computation
 using \Cref{lem:embedding} shows that the Gram matrices corresponding to the linearly independent sets $\{\Log_{F}(u_t),\Log_{F}(u_r)\}$ and $\{\Log_{F'}(u_t'),\Log_{F'}(u_r')\}$ are given by 
  \[
 \begin{bmatrix}
            6\log^2|u_t|& 0 \\
            0 &2\log^2|u_r|
        \end{bmatrix}\quad\text{and}\quad \begin{bmatrix}
            6\log^2|u_t'|&0 \\
            0&2\log^2|u_r'|
        \end{bmatrix},
 \]
respectively. Since $[\Lambda_F]=[\Lambda_{F'}]$, there exists $\lambda\in \R_{>0}$ and $A=\begin{bmatrix}
            a&b\\
            c&d
        \end{bmatrix}\in \mathrm{GL}_2(\Q)$ such that
        \[
        A\begin{bmatrix}
            6\log^2|u_t|& 0 \\
            0 &2\log^2|u_r|
        \end{bmatrix}A^T=\lambda\cdot \begin{bmatrix}
            6\log^2|u_t'|& 0 \\ 
            0&2\log^2|u_r'|
        \end{bmatrix}.
        \]
       Explicitly performing the matrix multiplication on the left hand side, we obtain
\[ \begin{bmatrix}
        6a^2\log^2|u_t|+2b^2\log^2|u_r|&6ac\log^2|u_t|+2bd\log^2|u_r|\\
        6ac\log^2|u_t|+2bd\log^2|u_r|&6c^2\log^2|u_t|+2d^2\log^2|u_r|\\
    \end{bmatrix} = \lambda \cdot \begin{bmatrix}
            6\log^2|u_t'|& 0 \\ 
            0&2\log^2|u_r'|
        \end{bmatrix}. \]
By \Cref{prop: Qbarindependence}, the quantities $\log^2|u_t|$ 
and $\log^2|u_r|$ are $\bar{\Q}$-linearly independent, so $ac=bd=0$. Furthermore, since $A$ is invertible, either $c=b=0$ or $a=d=0$. So either
\begin{align*}
& 6a^2\log^2|u_t|=6\lambda\log^2|u_t'|\quad\text{and}\quad 2d^2\log^2|u_r|=2\lambda\log^2|u_r'|, \quad \text{or} \\
& 2b^2\log^2|u_r|=6\lambda\log^2|u_t'|\quad\text{and}\quad 6 c^2\log^2|u_t|=2\lambda\log^2|u_r'|.
\end{align*}
Solving for $\lambda$, one of the following equations must hold: 
    \begin{equation} \label{eqn: relation ur and ut}
        \begin{aligned}
            a^2(\log|u_r'|)^2(\log|u_t|)^2 &= d^2(\log|u_t'|)^2(\log|u_r|)^2\quad\text{or}\\ 
            b^2(\log|u_r|)^2(\log|u_r'|)^2 &= 9c^2(\log|u_t|)^2(\log|u_t'|)^2.
        \end{aligned}
    \end{equation}
    We prove that \eqref{eqn: relation ur and ut} implies $F\cong F'$. 

    Let $\Tilde{\tau}\in \Gal(\widetilde{FF'}/\Q)$ be the automorphism on $\widetilde{FF'}$ induced by complex conjugation. Note that 
    \begin{equation}\label{eq: absvalue}
    u^{1+\Tilde{\tau}}=    |u|^2 \quad \text{for all $u\in \widetilde{F F'}$}.
    \end{equation}
    Assume for a contradiction that $F\not\cong F'$. Then $M\coloneq\Tilde{F}\cap \Tilde{F'}$ is a Galois number field of degree $m\coloneq[M:\Q] \le 12$. The only possibilities for $m$ are $m=1,2,4,6$ or $12$. We show that Equations \eqref{eqn: relation ur and ut} cannot hold for any of these values of $m$. 

    First, if $m=1,2$ or $4$, then $M$ is contained  in the unique biquadratic field $F_4 \subset \Tilde{F}$. By Proposition \ref{prop: DecompositionOfUnitGroup}, the sublattices $\Lambda_{\rel}$ and $\Lambda_{\Tilde{F_3}}$ are orthogonal to $\Lambda_{F_4}$; in particular, their respective intersections with $\Lambda_{F_4}$ are trivial. It follows that 
    \[
    \Log_{\widetilde{FF'}}(\N_{\widetilde{F F'}/F_4}(u^{1+\tilde{\tau}}))=0 \quad \text{for all $u\in\{u_t,u_r,u_t',u_r'\}$}.
    \]
    Since both $\Tilde{F}$ and $\Tilde{F'}$ are Galois, we may use \Cref{lem: orthogonality of relative units} to conclude that the set 
    \[\mathcal{B}\coloneq\big\{(1+\tilde{\tau})\Log_{\widetilde{FF'}}(u) : u\in \{u_t,u_r,u_t',u_r'\}\big\}\]
    is an orthogonal set. Our choice for the embedding $\tilde{\rho}:\widetilde{FF'}\rightarrow \C$ guarantees that no element of $\mathcal{B}$ is the zero vector. Using Equation \eqref{eq: absvalue} and the fact that $\log:\R_{>0}\rightarrow \R$ is injective, we conclude that the subset $\{\log|u_t|,\log|u_r|,\log|u_t'|,\log|u_r'|\}\subset \R$ is $\Q$-linearly independent. \Cref{conj: four exponentials problem} now implies that \eqref{eqn: relation ur and ut} cannot hold. 

    Next, suppose that $m=6$. Since $M/\Q$ is a Galois extension, we must have $M=\Tilde{F_3}$. It follows that the cubic subfields of $\Tilde{F}$ and $\Tilde{F'}$ are isomorphic, so $\log|u_t|^2=\log|u_t'|^2$. Hence, \eqref{eqn: relation ur and ut} can be simplified to
    \[
    a^2(\log|u_r'|)^2=d^2(\log|u_r|)^2\quad\text{or}\quad b^2(\log|u_r|)^2(\log|u_r'|)^2=9c^2(\log|u_t|)^4.
    \]
    As in the case $m \le 4$, \Cref{prop: DecompositionOfUnitGroup} and Equation \eqref{eq: absvalue} imply that the set $\mathcal{B}'\coloneq\{\log|u_r'|,\log|u_r|,\log|u_t|\}$ is linearly independent. \Cref{conj: four exponentials problem} again shows that both equations in \eqref{eqn: relation ur and ut} are impossible. 

    Finally, if $m=12$, then $\Tilde{F}=\Tilde{F'}$. Note that $D_{6}$ has three conjugacy classes of subgroups of order 2. Two of these correspond to two distinct conjugacy classes of non-Galois sextic subfields of $\Tilde{F}$, namely the classes of $F$ and $K_6$. The quadratic subfield $K_2$ of $K_6$ and its conjugate fields are real, while the quadratic subfield $F_2$ of $F$ and its conjugates are imaginary. Since $F$ and $F'$ are non-Galois and both contain only imaginary quadratic subfields, they must be conjugate. This proves the result.     
\end{proof}


\subsubsection{A convergent sequence of non-CM $D_6$-unit shapes} 
We conclude \Cref{sec:shapesD6} by proving (in \Cref{theorem: limitpoint}) that the unit shapes of a family of imaginary pure sextic number fields investigated by Stender \cite{Stender1977} lie on the right boundary od $\mathcal{S}_2$ and converge in the standard fundamental domain $\mathcal{S}_2$ to the algebraic number $z=\frac{1}{2}+\frac{\sqrt{3}}{2}i$, which corresponds to the hexagonal lattice. It follows that the hexagonal lattice is contained in the closure of $\Omega_{\non-CM}(D_6,0,3)$ inside $\mathcal{S}_2$. Even though the unit groups in this family contain a Minkowski unit, we will show that their log unit lattices are never well-rounded. 

Let $\omega \in \C$ such that $\omega^6$ is a negative integer, so \(F=\mathbb{Q}(\omega)\in\mathcal{F}_{\text{non-CM}}(D_6,0,3)\). Let \(\tau_i\colon F\to\mathbb{C}\) for \(i\in\{1,2,3\}\) be representatives of the three pairs of complex conjugate embeddings of \(F\). Writing \(\zeta=e^{\pi i/3}\), we choose these embeddings so that \(\tau_1(\omega)=\omega\), \(\tau_2(\omega)=\omega\zeta\), and \(\tau_3(\omega)=\omega\zeta^2\). 
 The logarithmic embedding $\Log_F : F^\times \longrightarrow \mathbb{R}^3$ corresponding to these representatives is given by
\[
\Log_F(\alpha) = \begin{bmatrix} 2\log|\tau_1(\alpha)| \\ 2\log|\tau_2(\alpha)| \\ 2\log|\tau_3(\alpha)| \end{bmatrix}.
\]

We focus on a specific family of pure fields in $\mathcal{F}_{\text{non-CM}}(D_6, 0, 3)$, for which Stender \cite{Stender1977} provided explicit fundamental units.

\begin{proposition}[\mbox{\cite[Satz 19]{Stender1977}}] \label{prop:stender77}

Let $D$ be a positive integer such that $m = D^6 \pm 1 > 1$ is cubefree, and let $\omega = \sqrt[6]{-27m}$. Then 
\[ \epsilon_1 = D^2 - D\omega+ \omega^2/3, \qquad \epsilon_2 = D^2 + D\omega + \omega^2/3 \]
is a pair of fundamental units of 
$F = \mathbb{Q}(\omega)$.
\end{proposition}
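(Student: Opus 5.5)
The plan is to first check that $\epsilon_1,\epsilon_2$ are units, sitting in the relative structure of $F/F_3$, and then to prove they generate $E_F$ by reducing to the trichotomy of \Cref{cor:unit-basis}.

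\textbf{Units and relative structure.} Put $\theta=\omega^2/3$. Then $\theta^3=\omega^6/27=-m$, so $\theta$ is an algebraic integer and $F_3=\Q(\theta)=\Q(\sqrt[3]{-m})$ is the complex cubic subfield of $F$. The nontrivial element $\gamma$ of $\Gal(F/F_3)$ sends $\omega\mapsto-\omega$, so $\epsilon_2=\epsilon_1^{\gamma}$. A direct computation gives
\[
\epsilon_1+\epsilon_2=2(D^2+\theta),\qquad \epsilon_1\epsilon_2=D^4-D^2\theta+\theta^2=\frac{D^6+\theta^3}{D^2+\theta}=\frac{\pm1}{D^2+\theta}.
\]
Moreover $\N_{F_3/\Q}(D^2+\theta)=D^6-m=\mp1$, so $\eta\coloneq D^2+\theta$ is a unit of $O_{F_3}$. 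Hence $\epsilon_1$ is a root of the monic quadratic $x^2-2\eta x\pm\eta^{-1}\in O_{F_3}[x]$, so $\epsilon_1\in O_F^\times$. Consequently $\Log_F(\epsilon_1)+\Log_F(\epsilon_2)=\Log_F(\N_{F/F_3}\epsilon_1)$ lies in $\Lambda^t_{F/F_3}$. Also $\epsilon_1/\epsilon_2=\epsilon_1^{1-\gamma}$ is a relative unit, so its logarithm lies in $\Lambda^r_{F/F_3}$.

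\textbf{Reduction to two claims.} If we show that (a) $\Log_F(\eta)$ generates $\Lambda^t_{F/F_3}$ and (b) $\Log_F(\epsilon_1/\epsilon_2)$ generates $\Lambda^r_{F/F_3}$, then $\Log_F(\epsilon_1)=\tfrac12(v_t+v_r)=v$ in the notation of \eqref{eq:vectors}. \Cref{prop: unit structure}(ii) then gives $\Lambda_F=\mathrm{span}_A(v)$, with basis $\{v,\gamma v\}=\{\Log_F\epsilon_1,\Log_F\epsilon_2\}$, which is exactly case (iii) of \Cref{cor:unit-basis}.

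\textbf{Claim (a).} I would first show that $\eta$ is a fundamental unit of the pure cubic field $F_3$. Since $m=(D^2)^3\pm1$, this is the classical Bernstein--Stender family $\sqrt[3]{d^3\pm1}$, where $d-\sqrt[3]{d^3\pm1}$ is fundamental. To be self-contained, I would use the Artin-type lower bound for the regulator of a complex cubic field in terms of its discriminant, compared with $\log|\eta|\approx 3\log D$. Next, $2\Lambda^t\subseteq\Lambda_{F_3}$ (proof of \Cref{prop: subfield in trivial part}), so $\Log_F(\eta)$ fails to generate $\Lambda^t$ only if $\zeta\eta$ is a square in $F$ for some root of unity $\zeta$. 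Since $F=F_3(\sqrt{3\theta})$, this happens iff $3\theta\zeta\eta$ or $\zeta\eta$ is a square in $F_3$. Note that $\mu_F\subseteq\{\pm1,\pm\zeta_3^{\pm1}\}$ and cube roots of unity are cubes, so only signs matter. Taking $\N_{F_3/\Q}$ rules out most cases, since $\N(3\theta)=-27m$ and $m$ is cubefree. For the remaining sign cases I would reduce modulo a suitable prime of $F_3$, for instance a prime above $3$ or above a prime dividing $m$, to show the element is a non-square.

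\textbf{Claim (b), the main obstacle.} We must show that $\epsilon_1/\epsilon_2$ is not $\zeta\xi^2$ for a relative unit $\xi$, i.e.\ that $v_r$ is not half of $\Log_F(\epsilon_1/\epsilon_2)$. My first approach is an inequality argument. A relative unit $\xi$ that is not a root of unity satisfies $\xi\xi^\gamma\in\mu$, and $\xi\in O_F=$ an order containing $\Z[\theta,\omega]$ with bounded index. Writing $\xi=(a+b\omega)/t$ with $a,b\in O_{F_3}$ and bounding $|\tau_i(\xi)|$ from below in the embedding where it is large, one expects $\log|\tau_i\xi|\gtrsim\log D$ up to constants. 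By contrast, the explicit value $|\epsilon_1/\epsilon_2|$ at the relevant embedding, using $|\omega|\approx\sqrt3D$, gives $\log|\epsilon_1/\epsilon_2|\approx c\log D$ with $c$ below twice that lower bound. This excludes a square root for $D$ large; small $D$ would be checked by direct computation. As a fallback, a local obstruction at a prime above $3$ or $2$ could show $\pm\epsilon_1/\epsilon_2$ is not a square in $F$.
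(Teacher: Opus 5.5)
There is a genuine gap. Your check that $\epsilon_1,\epsilon_2$ are units is correct, and so is the reduction to (a) and (b) via \Cref{prop: unit structure}. Note that the paper itself gives no proof here; it simply quotes Stender's Satz~19.

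Part (a) is essentially fine, with two adjustments. First, quote \Cref{prop:stender_cubic} with $D^2$ in place of $D$ rather than relying on Artin's bound, which does not suffice when $3\mid D$ and $m$ is a small multiple of a square. Second, since $2\Lambda^t_{F/F_3}\subseteq\Lambda_{F_3}$, what remains is that $\pm\eta$ and $\pm3\theta\eta$ are non-squares in $F_3$. The first follows from fundamentality. The second follows from parity of valuations at a prime $p\neq 3$ with $p\,\|\,m$, leaving only the edge case $m=3b^2$; there $i\in F$, so your description of $\mu_F$ is slightly off.

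The gap is (b), which carries all the arithmetic content of Stender's theorem.

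First, your reformulation ``i.e.\ $v_r$ is not half of $\Log_F(\epsilon_1/\epsilon_2)$'' is wrong. For $\Lambda^t_{F/F_3}$ the inclusion $2\Lambda^t_{F/F_3}\subseteq\Lambda_{F_3}$ caps the index at $2$. Nothing analogous bounds $k=[\Lambda^r_{F/F_3}:\Z\Log_F(\epsilon_1/\epsilon_2)]$. Moreover, once (a) holds, $k$ is automatically odd:
\begin{itemize}
\item take $v_t=\Log_F(\epsilon_1\epsilon_2)$ and write $\Log_F(\epsilon_1/\epsilon_2)=kv_r$;
\item then $\Log_F(\epsilon_1)=\tfrac12(v_t+kv_r)$ lies outside $\Lambda^t_{F/F_3}\oplus\Lambda^r_{F/F_3}$;
\item so \Cref{prop: unit structure} puts $v=\tfrac12(v_t+v_r)$ in $\Lambda_F$;
\item hence $\tfrac{k-1}{2}v_r=\Log_F(\epsilon_1)-v\in\Lambda^r_{F/F_3}$, forcing $k$ odd.
\end{itemize}
More directly, $\epsilon_1/\epsilon_2=\epsilon_1^2(\epsilon_1\epsilon_2)^{-1}$ is $\pm$ a square in $F$ if and only if $\pm\eta$ is. So your fallback local obstruction to square roots at $2$ or $3$ proves nothing beyond (a). What must actually be ruled out is $\epsilon_1/\epsilon_2=\pm\xi^k$ with $\xi$ a relative unit and $k\geq 3$ odd.

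Second, the inequality argument is only a heuristic, and the constant is the whole point. You need
\[
\log\max_\tau|\tau(\xi)|>\tfrac13\log\max_\tau|\tau(\epsilon_1/\epsilon_2)|=2\log D+O(1)
\]
for every non-torsion relative unit $\xi$. Your set-up does not deliver this. Write $\xi=x+y\omega$ with $x,y\in F_3$. The natural estimate uses $2y\omega=\xi-\xi^\gamma\in O_F$ and $|\N_{F/\Q}(2y\omega)|=2^6\cdot 27m\,\N_{F_3/\Q}(y)^2$. This only yields
\[
\max_\tau|\tau(\xi)|\gg m^{1/4}|\N_{F_3/\Q}(y)|^{1/2}\approx D^{3/2}|\N_{F_3/\Q}(y)|^{1/2}.
\]
Worse, $|\N_{F_3/\Q}(y)|$ is not bounded below by $1$. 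Your assumption that $O_F$ has bounded index over $\Z[\theta,\omega]$ is false: for example, $\theta\omega/b\in O_F$ whenever $b^2\mid m$. Closing the gap requires genuine Diophantine control of the relative Pell equation $x^2-3\theta y^2=\pm1$ over $F_3$. That is precisely what Stender supplies, and your proposal leaves it open.
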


\begin{theorem}\label{theorem: limitpoint}
 The unit shapes of the family of fields $F = \mathbb{Q}(\omega)$ of \Cref{prop:stender77} lie on the right boundary of $\mathcal{S}_2$ and converge to the hexagonal lattice shape as $D \to \infty$. Moreover, $\Lambda_F$ is not well-rounded for any $m$ of the form given in \Cref{prop:stender77}. 
\end{theorem}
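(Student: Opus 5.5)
The plan is to compute the log unit lattice explicitly from Stender's units, exploiting the fact that $\epsilon_2$ is the Galois conjugate of $\epsilon_1$ over the cubic subfield. Let $F_3=\Q(\omega^2)$, the complex cubic subfield, and let $\gamma$ generate $\Gal(F/F_3)$, so $\gamma(\omega)=-\omega$. Then $\epsilon_2=\gamma(\epsilon_1)$. By \Cref{prop:stender77} the set $\{\epsilon_1,\epsilon_1^\gamma\}$ is a basis of $E_F$, so we are in case (iii) of \Cref{cor:unit-basis}: $\Lambda_F\cong A$, and $v:=\Log_F(\epsilon_1)$ is the vector $(v_t+v_r)/2$ of \eqref{eq:vectors}.

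Next I will pin down how $\gamma$ permutes the embeddings. Write $\omega=r e^{\pi i/6}$ with $r=(27m)^{1/6}$, so that $\overline{\omega}=\omega\zeta^{-1}$. A short check shows that $\tau_1\circ\gamma$ is the conjugate of $\tau_3$ and $\tau_2\circ\gamma$ is the conjugate of $\tau_2$. Hence, if $v=(x_1,x_2,x_3)^T$, then $\gamma v=(x_3,x_2,x_1)^T$, and $x_1+x_2+x_3=0$.

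The Gram matrix of $\{v,\gamma v\}$ has the form $\begin{bmatrix} a&b\\ b&a\end{bmatrix}$, with $a=\|v\|^2$ and $b=\langle v,\gamma v\rangle$. From this I will carry out the reduction of \Cref{cor: reduced basis} directly:
\begin{itemize}
\item If $|b|\le a/2$, the basis $\{v,\gamma v\}$ is reduced and the shape lies on the lower arc.
\item If $b<-a/2$, then $\|v+\gamma v\|^2=2a+2b<a$. Also $\langle v+\gamma v,v\rangle=a+b=\tfrac12\|v+\gamma v\|^2$, so the reduced basis $\{v+\gamma v,v\}$ gives the point $z=\tfrac12+iy$ on the right boundary, with $y^2=a/(2a+2b)-\tfrac14$.
\end{itemize}
So everything reduces to the sign of $a+2b$. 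Substituting $x_2=-(x_1+x_3)$ gives
\[ a+2b = 4x_1^2+10x_1x_3+4x_3^2 = 2(2x_1+x_3)(x_1+2x_3). \]

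The main step, and the expected obstacle, is estimating $|\tau_k(\epsilon_1)|$. Put $\eta=(m/D^6)^{1/6}\to 1$ and $\theta_k=\pi/6+(k-1)\pi/3$. Since $27^{1/6}=\sqrt3$,
\[ \tau_k(\epsilon_1)=D^2\bigl(1-\sqrt3\,\eta e^{i\theta_k}+\eta^2e^{2i\theta_k}\bigr). \]
When $\eta=1$ this factors as $D^2e^{i\theta_k}(2\cos\theta_k-\sqrt3)$. This gives $|\tau_2(\epsilon_1)|\sim\sqrt3D^2$ and $|\tau_3(\epsilon_1)|\sim2\sqrt3D^2$, with relative errors $O(D^{-6})$.

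For $k=1$ the leading term cancels, and estimating $|\tau_1(\epsilon_1)|$ directly would be delicate. I will avoid this by using $|\tau_1\tau_2\tau_3(\epsilon_1)|=1$, which gives $|\tau_1(\epsilon_1)|\sim 1/(6D^4)$. With $L=\log D$ this yields
\[ x_1=-8L+O(1),\qquad x_2=4L+O(1),\qquad x_3=4L+O(1). \]
Hence $2x_1+x_3<0$ for large $D$. Moreover,
\[ x_1+2x_3=2\log\bigl(|\tau_1(\epsilon_1)|\,|\tau_3(\epsilon_1)|^2\bigr)=2\log\frac{|\tau_3(\epsilon_1)|}{|\tau_2(\epsilon_1)|}, \]
and the ratio $|\tau_3(\epsilon_1)|/|\tau_2(\epsilon_1)|$ is close to $2$, hence greater than $1$. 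I expect to verify $2x_1+x_3<0$ and $|\tau_3(\epsilon_1)|>|\tau_2(\epsilon_1)|$ for every admissible $D$ by direct inequalities on $|1-\sqrt3\eta e^{i\theta}+\eta^2e^{2i\theta}|$. I would handle small $D$ by a separate check or a cruder bound.

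Granting these inequalities, $a+2b<0$ for all admissible $m$. So the shape lies on the right boundary and is never well-rounded; this is consistent with \Cref{theorem: trascendental point}, which excludes the corner $\tfrac12+\tfrac{\sqrt3}2i$. Finally, the asymptotics give $a\sim96L^2$ and $2a+2b\sim96L^2$. Hence $a/(2a+2b)\to1$ and $y\to\sqrt3/2$, so the shapes converge to the hexagonal point $\tfrac12+\tfrac{\sqrt3}{2}i$ as $D\to\infty$.
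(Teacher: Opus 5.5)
Your proposal is correct and is essentially the paper's proof. The paper also uses $v'=\Log_F(\epsilon_1)+\Log_F(\epsilon_2)=v+\gamma v$ as the shorter basis vector, and its inequality $24b_1^2<8(b_1^2+b_1b_2+b_2^2)$, i.e.\ $(2b_1+b_2)(b_1-b_2)<0$, is your condition $a+2b<0$ under $x_2=2b_1$, $x_3=2b_2$; its limit again comes from $b_2/b_1\to 1$.

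The check you defer for small $D$ is needed for the ``any $m$'' claim, and it follows from exact moduli rather than asymptotics, as in the paper. In your normalization, $|\tau_2(\epsilon_1)|^2=D^4(1+\eta^2+\eta^4)>1$ and $|\tau_3(\epsilon_1)|^2=D^4(1+\eta)^2(1+\eta+\eta^2)$. Hence $|\tau_3(\epsilon_1)|^2/|\tau_2(\epsilon_1)|^2=(1+\eta)^2/(1-\eta+\eta^2)>1$ for every $\eta>0$. This gives $x_1+2x_3>0$ and $2x_1+x_3=-(2x_2+x_3)<0$ for all admissible $D$.
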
   
\begin{proof}
    Let $\{\epsilon_1, \epsilon_2\}$ be the pair of fundamental units of $F$ given in \Cref{prop:stender77}. Let $\omega = \sqrt[6]{-27m}$ with $m = D^6 + 1$ (the reasoning for $m = D^6 - 1$ is completely analogous). 
    Here, the cubic subfield of $F$ is \(F_3=\mathbb{Q}(\omega^2)\). Let \(\gamma\) denote the non-trivial automorphism of \(\Gal(F/F_3)\), so \(\gamma(\omega)=-\omega\). It follows that \(\epsilon_1\) is a Minkowski unit of \(F\). Fix~$i$ as the choice of 6-th root of $-1$ such that $\omega = i \sqrt{3} \sqrt[6]{D^6+1}$, and let $\zeta = e^{\pi i/3} = (1 + i\sqrt{3})/2$. Then
\begin{align*}
\Log_F(\epsilon_1)
&=
\begin{bmatrix}
2\log \left| D^2 - D\omega + \tfrac{\omega^2}{3} \right| \\[6pt]
2\log \left| D^2 - D\omega\zeta + \tfrac{\omega^2\zeta^2}{3} \right| \\[6pt]
2\log \left| D^2 - D\omega\zeta^2 - \tfrac{\omega^2\zeta}{3} \right|
\end{bmatrix}, \\
\Log_F(\epsilon_2)
&=
\Log_F(\gamma(\epsilon_1))
=
\begin{bmatrix}
2\log \left| D^2 + D\omega + \tfrac{\omega^2}{3} \right| \\[6pt]
2\log \left| D^2 + D\omega\zeta + \tfrac{\omega^2\zeta^2}{3} \right| \\[6pt]
2\log \left| D^2 + D\omega\zeta^2 - \tfrac{\omega^2\zeta}{3} \right|
\end{bmatrix}.
\end{align*}
Defining $b_1=\log | D^{2}-D\omega+\omega^{2}/3 |$ and $b_2=\log | D^{2}-D\omega\zeta+\omega^{2}\zeta^{2}/3|$, we have
\[
\Log_F(\epsilon_1)= 2\begin{bmatrix} b_1 \\ b_2 \\-b_1-b_2 \end{bmatrix}, 
\qquad
\Log_F(\epsilon_2)= 2\begin{bmatrix} b_1 \\ -b_1-b_2 \\ b_2\end{bmatrix}.
\]
Let $Y= \sqrt[6]{1 + D^{-6}} \in \R$. Using the identity $\omega = i\sqrt{3}DY$, we compute $b_1$ and $b_2$ as follows.
\begin{align*}
b_1&=\log | D^2(1 - Y^2 -i\sqrt{3}Y) | = 2\log D + \tfrac{1}{2} \log  (1 + Y^2 + Y^4),  \\[2pt]
b_2 &= 2 \log D + \log \left ( 1 + Y \right ) + \tfrac{1}{2} \log \left ( 1 + Y +Y^2 \right ).
\end{align*}
Let $v'=\Log_F(\epsilon_1)+\Log_F(\epsilon_2)$. Then  $\{v', \Log_F(\epsilon_1)\}$ is also a basis of $\Lambda_F$. Moreover,
\begin{align*}
\|v'\|^2 &= 24 b_1^2,  \\
\|\Log_F(\epsilon_1)\|^2 
  &= \|\Log_F(\epsilon_2)\|^2 
   = 8 \bigl (b_1^2 + b_1 b_2 + b_2^2\bigr).
\end{align*}
Since \( D \ge 1 \), we have \( Y > 1 \), and hence
\[
\exp(2b_1 - 2b_2)
= \frac{1 + Y^2 + Y^4}{(1+Y)^2(1+Y+Y^2)} < 1.
\]
It follows that \( b_1 < b_2 \). Consequently,
\[
\|v'\|^2 = 24 b_1^2 < \|\Log_F(\epsilon_1)\|^2 = \|\Log_F(\epsilon_2)\|^2.
\]
Therefore, the set \( \{\Log_F(\epsilon_1), \Log_F(\epsilon_2)\} \) does not attain the successive minima of \( \Lambda_F \). By \Cref{cor: reduced basis}, the lattice \( \Lambda_F \) is not well-rounded. Since $F$ has a Minkowski unit, it is not orthogonal by \Cref{cor: reduced basis} (i). This implies that the unit shape of $F$ is on the right boundary of $\mathcal{S}_2$.

Next, we compute the limit point of Stender's family. Let $\phi$ denote the angle between $v'$ and $\Log_F(\epsilon_1)$. Since $b_2/b_1>1$,   we obtain
\[ 
\cos(\phi) =  \frac{\sqrt{3}}{ 2  \sqrt{(b_2/b_1)^2+b_2/b_1+1}}\in (0, 1/2).\]
As \( D \to \infty \), the ratio \( b_2/b_1 \) approaches \(1\), which implies that $\cos(\phi)$ tends to \( 1/2 \). Hence \( \|\Log_F(\epsilon_1)\|^2/\|v'\|^2 \to 1 \). Consequently, the associated lattices converge to the hexagonal lattice. 
\end{proof}


\section{Orthogonality of some families of imaginary pure sextics} \label{sec:geomD6}

In this section, we investigate the orthogonality of the log unit lattices for a family of imaginary pure sextic fields $F = F_3F_2$ that are the compositum of a pure cubic field $F_3 = \mathbb{Q}(\sqrt[3]{m})$ and an imaginary quadratic field $F_2= \mathbb{Q}(\sqrt{-d})$ for certain positive integers~$d, m$. In \Cref{ss:index}, we explicitly determine the ring of integers $O_F$. In \Cref{sec:orth}, we construct a polynomial whose coefficients depend on $d$ and $m$ for which the absence of an integer root modulo a certain large factor of $d$ guarantees the orthogonality of the log unit lattice of $F$, implying that the shapes of these lattices lie on the left boundary of~$\mathcal{S}_2$. Finally, in \Cref{sec:density}, we also provide a lower bound on the proportion of orthogonal lattices in the subfamily for $m=2$ and establish the exact density of fields with $m = 2$ and $d$ prime.

\subsection{The ring of integers of $F = \Q(\sqrt[3]{m}, \sqrt{-d})$} \label{ss:index} 

Under certain conditions on $m$ and $d$, the ring of integers $O_F$ can be explicitly determined from the subrings of integers $O_{F_3}$ and $O_{F_2}$. We provide explicit generators of $O_F/O_{F_3}$ in these cases. 

\begin{lemma}\label{lem:index1}
    Let $d, m$ be integers with $d > 0$, $d$ squarefree, $3 \nmid d$, $m$ odd, $m$ cubefree and $\gcd(d,m) = 1$. Let $F_2 = \mathbb{Q}(\sqrt{-d})$, $F_3 = \mathbb{Q}(\sqrt[3]{m})$ and $F = F_3 F_2= \mathbb{Q}(\sqrt[3]{m}, \sqrt{-d})$. Then $O_F = O_{F_3} \otimes_\Z O_{F_2} = O_{F_3}[\beta]$  where
    \[ \beta = \begin{cases} -d+\sqrt{-d} & \mbox{if $d \equiv 1, 2 \pmod{4}$}, \\[3pt] \displaystyle \frac{-d+\sqrt{-d}}{2} & \mbox{if $d \equiv 3 \pmod{4}$}. \end{cases} \]
    Moreover, $\Tr_{F/F_3}(\beta)$ and $\N_{F/F_3}(\beta)$ are divisible by $d$ in $O_{F_3}$.
   \end{lemma}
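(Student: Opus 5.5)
The plan is to apply \Cref{prop:index1} to $F_3$ and $F_2$, so the main work is to check that $\Delta_{F_3}$ and $\Delta_{F_2}$ are coprime. Then we translate the tensor decomposition into the monogenic description $O_{F_3}[\beta]$ and finish with a direct trace and norm computation.

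For the discriminants: $\Delta_{F_2}$ is $-d$ if $d\equiv 3\pmod 4$ and $-4d$ otherwise, so its prime divisors are the primes dividing $d$, together with $2$ in the latter case. For the pure cubic field $F_3=\Q(\sqrt[3]{m})$ with $m$ cubefree, the classical formula gives $\Delta_{F_3}=-27 f^2$ or $-3f^2$, where $f$ is the product of the primes dividing $m$. Hence the primes dividing $\Delta_{F_3}$ are $3$ and the primes dividing $m$.
\begin{itemize}
\item Since $3\nmid d$ and $\gcd(d,m)=1$, no prime divisor of $d$ divides $\Delta_{F_3}$.
\item The prime $2$ does not divide $\Delta_{F_3}$ because $m$ is odd.
\end{itemize}
So $\gcd(\Delta_{F_3},\Delta_{F_2})=1$, and \Cref{prop:index1} gives $O_F=O_{F_3}\otimes_\Z O_{F_2}$. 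The only delicate point is quoting the pure cubic discriminant correctly, including the $m\equiv\pm1\pmod 9$ distinction. That distinction affects only the power of $3$, so the argument is robust to it.

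Next, $O_{F_2}=\Z[\theta]$ with $\theta=\sqrt{-d}$ or $\theta=(1+\sqrt{-d})/2$. In either case $\beta=\theta-c$ for an integer $c$: we have $c=d$ in the first case, and $\beta=\theta-(d+1)/2$ in the second, where $(d+1)/2\in\Z$. Therefore $\Z[\beta]=O_{F_2}=\Z\oplus\Z\beta$, and
\[O_{F_3}\otimes_\Z O_{F_2}=O_{F_3}\oplus O_{F_3}\beta=O_{F_3}[\beta].\]

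Finally, since $F_3\cap F_2=\Q$, the nontrivial element of $\Gal(F/F_3)$ restricts to conjugation on $F_2$. So $\Tr_{F/F_3}(\beta)=\Tr_{F_2/\Q}(\beta)$ and $\N_{F/F_3}(\beta)=\N_{F_2/\Q}(\beta)$.
\begin{itemize}
\item If $d\equiv1,2\pmod 4$: the trace is $-2d$ and the norm is $d^2+d=d(d+1)$.
\item If $d\equiv3\pmod4$: the trace is $-d$ and the norm is $(d^2+d)/4=d\cdot\frac{d+1}{4}$. Here $(d+1)/4$ is an integer because $d\equiv 3\pmod 4$.
\end{itemize}
In every case both the trace and the norm are rational integers divisible by $d$, hence divisible by $d$ in $O_{F_3}$.
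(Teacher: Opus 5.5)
Your proof is correct and follows essentially the same route as the paper: you establish coprimality of $\Delta_{F_3}$ and $\Delta_{F_2}$ to invoke \Cref{prop:index1}, note that $\beta$ differs from the standard generator of $O_{F_2}$ by a rational integer, and then compute the trace and norm directly. The only difference is at the first step, where the paper is slightly coarser: it merely observes that the discriminants divide $27m^2$ and $4d$, which are coprime under the hypotheses for the same reasons you give.
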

\begin{proof}
    We have $F_3 F_2 \cong F_3 \otimes_\Q F_2$. The discriminants of $F_3$ and $F_2$ divide $27m^2$ and $4d$, respectively, and are hence coprime. \Cref{prop:index1} now implies that  $O_F = O_{F_3} \otimes_\Z O_{F_2}$.

    It is well-known that $O_{F_2} = \Z[\sqrt{-d}]$ when $d \equiv 1, 2 \pmod{4}$ and $O_{F_2} = \Z[(1+\sqrt{-d})/2]$ when $d \equiv 3 \pmod{4}$. It now easy to see that $O_{F_2} = \Z[\beta]$, and hence $O_F = O_{F_3}[\beta]$.
    
    Finally, if $d \equiv 1, 2 \pmod{4}$, then $\Tr_{F/F_3}(\beta) = -2d$ and $\N_{F/F_3}(\beta) = (d+1)d$. If $d \equiv 3 \pmod{4}$, then $\Tr_{F/F_3}(\beta) = -d$ and $\N_{F/F_3}(\beta) = cd$ where $c = (d+1)/4 \in \Z$.
\end{proof}

We establish an analogous result for $m = 2$ and arbitrary positive squarefree $d$. 

\begin{proposition}\label{prop:index-m=2}
    Let $d$ be a positive squarefree integer, and write $d = 2^{e_2}3^{e_3}d_0$ with $e_2, e_3 \in \{0,1\}$ and $\gcd(d_0,6) = 1$. Let $F_2 = \mathbb{Q}(\sqrt{-d})$, $F_3 = \mathbb{Q}(\sqrt[3]{2})$, and $F = F_3 F_2 = \mathbb{Q}(\sqrt[3]{2}, \sqrt{-d})$. Then $O_F = O_{F_3}[\beta]$ with
    \[ \beta = \begin{cases}  \displaystyle\frac{d+\sqrt{-d}}{\sqrt[3]{2}(\sqrt[3]{2}+1)^{e_3}} & \mbox{if $d \equiv 1,2 \pmod{4}$}, \\[10pt] \displaystyle\frac{d+\sqrt{-d}}{2(\sqrt[3]{2}+1)^{e_3}} & \mbox{if $d \equiv 3 \pmod{4}$}. \end{cases} \]
    Moreover, $\Tr_{F/F_3}(\beta)$ and $\N_{F/F_3}(\beta)$ are divisible by $d_0$ in $O_{F_3}$.
\end{proposition}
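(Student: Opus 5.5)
The plan is to prove three things in turn: $\beta$ is integral, the trace and norm are divisible by $d_0$, and the order $O_{F_3}[\beta]$ is maximal at every prime of $O_{F_3}$. Throughout write $\theta=\sqrt[3]{2}$ and $\lambda=\theta+1$, and use $O_{F_3}=\Z[\theta]$. We have $2=\theta^3$, and $\lambda^3=3(1+\theta+\theta^2)$ where $1+\theta+\theta^2=(\theta-1)^{-1}$ is a unit. So $2$ and $3$ are totally ramified in $F_3$, and $(\theta)$ and $(\lambda)$ are the unique primes above them.

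\textbf{Integrality and the trace/norm claim.} First compute the relative minimal polynomial $x^2-\Tr_{F/F_3}(\beta)x+\N_{F/F_3}(\beta)$ from the conjugate $\bar\beta$, which replaces $\sqrt{-d}$ by $-\sqrt{-d}$. In the case $d\equiv 1,2\pmod 4$ this gives
\[
\Tr(\beta)=\frac{2d}{\theta\lambda^{e_3}}=\frac{\theta^2 d}{\lambda^{e_3}}, \qquad \N(\beta)=\frac{d(d+1)}{\theta^2\lambda^{2e_3}}.
\]
In the case $d\equiv 3\pmod 4$ it gives $\Tr(\beta)=d/\lambda^{e_3}$ and $\N(\beta)=d(d+1)/(4\lambda^{2e_3})$. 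Integrality then follows from valuations:
\begin{itemize}
\item $d(d+1)$ is even, so $v_\theta(d(d+1))\ge 3$;
\item if $d\equiv 3\pmod 4$ then $4\mid d+1$;
\item if $e_3=1$ then $v_\lambda(d)=3$.
\end{itemize}
In each formula the remaining factor is $d_0$ times an element of $O_{F_3}$, which gives the divisibility by $d_0$.

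\textbf{Maximality away from $2$ and $3$.} Next I show $O_{F_3}[\beta]$ is maximal locally at each prime $\mathfrak p$ of $O_{F_3}$. Let $p\nmid 6$ lie under $\mathfrak p$. The element $\beta$ differs from $(d+\sqrt{-d})$ times a $\mathfrak p$-adic unit by a shift in $O_{F_3}$. So locally $O_{F_3}[\beta]=O_{F_3}[\sqrt{-d}]$ (or the index-$2$-free version of it), whose relative discriminant $-4d$ has $\mathfrak p$-valuation at most $1$. Since $F_3$ is unramified at $p$, this is exactly the local relative discriminant; equivalently, one applies the coprimality argument of \Cref{prop:index1} locally. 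Hence the order is $\mathfrak p$-maximal.

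\textbf{Maximality at $(\theta)$ and $(\lambda)$.} The remaining work is at these two primes, using the Dedekind criterion or an Eisenstein check on the quadratic above. I split into cases by $d \bmod 4$ and by $e_3$.
\begin{itemize}
\item At $(\lambda)$ with $e_3=1$: $v_\lambda(\N\beta)=3\cdot 1-2=1$, so the polynomial is Eisenstein at $\lambda$ and the order is $\lambda$-maximal.
\item At $(\theta)$ with $d\equiv 2\pmod 4$: $v_\theta(\N\beta)=v_\theta(d)-2=1$, so the polynomial is again Eisenstein.
\item At $(\theta)$ with $d\equiv 3\pmod 4$: the discriminant $-d/\lambda^{2e_3}$ is a $\theta$-unit, so $\theta$-maximality is automatic.
\item At $(\lambda)$ with $e_3=0$: the discriminant is either $-4d/\theta^2$ or $-d$, and it has $\lambda$-valuation $0$.
\end{itemize}

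The main obstacle I expect is the case $d\equiv 1\pmod 4$ at $(\theta)$. Here $v_\theta(\N\beta)=v_\theta(d+1)-2$ can be large and the discriminant $-4d/\theta^2$ has $\theta$-valuation $4$, so neither shortcut applies directly. I would first translate $\beta$ by a suitable element of $O_{F_3}$ (for instance $\beta-\theta^2 c$) to make the polynomial Eisenstein at $\theta$. Failing that, I would run Dedekind's criterion: reduce modulo $\theta$, where the polynomial is $(x-a)^2$, and check that the remainder is not divisible by $\theta^2$. An alternative check is to compare with the known local discriminant of $\Q_2(\sqrt[3]{2},\sqrt{-d})$, whose ramification over $\Q_2(\sqrt[3]{2})$ has conductor exponent determined by $d\equiv 1\pmod 4$. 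Once maximality holds at every prime, $O_F=O_{F_3}[\beta]$.
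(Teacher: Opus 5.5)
Your prime-by-prime strategy is sound, and every case you actually settle is handled correctly: integrality, divisibility by $d_0$, the primes not dividing $6$, the prime $(\lambda)$ for both values of $e_3$, and $(\theta)$ for $d\equiv 2,3\pmod 4$. The gap is the case you leave open, $d\equiv 1\pmod 4$ at $(\theta)$. The reason you give for it being hard is an arithmetic slip. If $d\equiv 1\pmod 4$ then $d+1\equiv 2\pmod 4$, so $v_2(d+1)=1$ exactly and $v_\theta(d+1)=3$; nothing here can be large. Hence
$v_\theta(\N_{F/F_3}(\beta))=v_\theta(d(d+1))-2=1$, while $v_\theta(\Tr_{F/F_3}(\beta))=v_\theta(\theta^2d/\lambda^{e_3})=2$. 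So the relative minimal polynomial of $\beta$ is Eisenstein at $(\theta)$, exactly as in the case $d\equiv 2\pmod 4$. The uniform reason is that $d^2+d\equiv 2\pmod 4$ whenever $d\equiv 1,2\pmod 4$, which is precisely the fact the paper uses.

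No translation of $\beta$ is needed, and your Dedekind fallback would stop immediately: modulo $\theta$ the polynomial is $x^2$, and the criterion reduces to $v_\theta(\N_{F/F_3}(\beta))=1$. The discriminant valuation $4$ you computed is then simply the true local discriminant exponent, not an index contribution. An unbounded $2$-adic valuation of $d+1$ occurs only for $d\equiv 3\pmod 4$. There the extra factor $2$ in the denominator of $\beta$ absorbs it, and you correctly used that $-d/\lambda^{2e_3}$ is a $\theta$-unit. With this one line added, your proof is complete.

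Once repaired, your argument is a localized version of the paper's. The paper writes an arbitrary $\kappa\in O_F$ as $\kappa=a_1/\delta_1+(a_2/\delta_2)\beta$ with reduced fractions over $O_{F_3}$. It then uses that $\N_{F/F_3}(\beta-\overline{\beta})=4d/(\theta^{2f}\lambda^{2e_3})=\theta^{6+3e_2-2f}\lambda^{e_3}(\theta-1)^{e_3}d_0$ is squarefree away from $\theta$; here $\theta^f\in\{\theta,2\}$ is the $2$-part of the denominator of $\beta$. This forces $\delta_2\mid\theta^{3+e_2-f}$. That single step covers all of your ``discriminant of valuation at most $1$'' cases, including $(\lambda)$ when $e_3=1$, where you used Eisenstein instead. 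For $d\equiv 1,2\pmod 4$ the paper then compares $\theta$-adic valuations in $\delta_2^2\,\N_{F/F_3}(\delta_1\kappa-a_1)=\delta_1^2a_2^2\,\N_{F/F_3}(\beta)$, whose right side has odd valuation; this is the Eisenstein argument done by hand. Your route is shorter because it cites the standard local criteria, while the paper's is self-contained.
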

\begin{proof}
For brevity, let $\alpha = \sqrt[3]{2}$. The ring $O_{F_3} = \mathbb{Z}[\alpha]$  is monogenic and a PID; its discriminant is $-2^2 3^3$. So the only primes that ramify in $F_3$ are $2 = \alpha^3$ and $3 = (\alpha+1)^3\epsilon$ where $\epsilon = \alpha-1 \in O_{F_3}^\times$. Thus, $d = \alpha^{3e_2}(\alpha+1)^{3e_3}\epsilon^{e_3}d_0$, where $d_0$ is squarefree in~$O_{F_3}$. 

For any element $\kappa = \kappa_1 + \kappa_2 \sqrt{-d} \in F$, with $\kappa_1, \kappa_2 \in F_3$, put $\overline{\kappa} = \kappa_1 - \kappa_2\sqrt{-d}$. Then Tr$(\kappa) = \kappa + \overline{\kappa} = 2\kappa_1 = \alpha^3\kappa_1$ and N$_{F/F_3}(\kappa) = \kappa\overline{\kappa} = \kappa_1^2 + \kappa_2^2d$, respectively. 

We first show that $\Tr_{F/F_3}(\beta)$ and $\N_{F/F_3}(\beta)$ are divisible by $d_0$ in $O_{F_3}$. For brevity, write 
\[ \beta = \frac{d+\sqrt{-d}}{\alpha^f(\alpha+1)^{e_3}}\,, \qquad f = \begin{cases} 1 & \mbox{if $d \equiv 1, 2 \pmod{4}$}, \\ 3 & \mbox{if $d \equiv 3 \pmod{4}$}. \end{cases} \]
Noting that  $\alpha^{2f}$ divides $(d+1)d$ in $O_{F_3}$, and hence also divides $(d+1)\alpha^{3e_2}$, we obtain
\begin{align*} 
\Tr_{F/F_3}(\beta) &= \displaystyle \frac{\alpha^{3-f}d}{(\alpha+1)^{e_3}} = \alpha^{3-f+3e_2}(\alpha+1)^{2e_3}\epsilon^{e_3} d_0 \in d_0 O_{F_3}, \\[3pt]
\N_{F/F_3}(\beta) &= \displaystyle \frac{d^2+d}{\alpha^{2f}(\alpha+1)^{2e_3}} = (d+1)\alpha^{3e_2-2f}(\alpha+1)^{e_3} \epsilon^{e_3}d_0 \in d_0 O_{F_3}.
\end{align*}
Since the minimal polynomial of $\beta$ over $F_3$ is $f_\beta(T) = T^2 - \Tr_{F/F_3}(\beta)T + \N_{F/F_3}(\beta) \in O_{F_3}[T]$, this shows in particular that $\beta$ is integral over $O_{F_3}$, which implies $O_{F_3}[\beta] \subseteq O_F$.
To establish the inclusion $O_{F_3}[\beta] \supseteq O_F$, let 
\[ \kappa = \frac{\lambda_1}{\delta_1} + \frac{\lambda_2}{\delta_2}\beta \in O_F, \]
with $\lambda_1, \lambda_2, \delta_1, \delta_2 \in O_{F_3}$, $\delta_1 > 0, \delta_2 > 0$,  and $\gcd(\lambda_1, \delta_1) = \gcd(\lambda_2, \delta_2) = 1$. To show that $\kappa \in O_{F_3}[\beta]$, our task is to prove that $\delta_1 = \delta_2 = 1$. Write
\begin{equation} \label{eq:delta2kappa}
    \delta_2 \left (\kappa - \frac{\lambda_1}{\delta_1} \right )=  \lambda_2\beta. 
\end{equation}
Then $\delta_2(\kappa-\overline{\kappa}) = \lambda_2(\beta-\overline{\beta})$, so
\begin{equation*} 
\delta_2^2 \, \N_{F/F_3}(\kappa - \overline{\kappa}) = \lambda_2^2 \N_{F/F_3}(\beta-\overline{\beta}) =  \lambda_2^2\,\frac{4d}{\alpha^{2f}(\alpha+1)^{2e_3}} = \lambda_2^2\alpha^{6+3e_2-2f}(\alpha+1)^{e_3}\epsilon^{e_3}d_0.
\end{equation*}
The right hand side of this identity is squarefree except for the primes in $O_{F_3}$ dividing~$\lambda_2$ and possibly the power of $\alpha$. Since $\gcd(\lambda_2, \delta_2) = 1$, it follows that $\delta_2$ divides $\alpha^{3+e_2-f}$. 

If $d \equiv 3 \pmod{4}$, then $f = 3$ and $e_2 = 0$, so $\delta_2 \in O_{F_3}^\times$. Then \eqref{eq:delta2kappa} yields $\lambda_1/\delta_1 = \kappa - \lambda_2 \beta \in F_3 \cap O_F = O_{F_3}$, in which case $\gcd(\lambda_1, \delta_1) = 1$ forces $\delta_1 \in O_{F_3}^\times$. So $\kappa \in O_{F_3}[\beta]$.

Suppose now that $d \equiv 1, 2 \pmod{4}$, so $f = 1$. Let $v_\alpha()$ denote the $\alpha$-adic valuation on~$F_3$. Since $d^2+d \equiv 2 \pmod{4}$, we find that $v_\alpha(N_{F/F_3}(\beta)) = 1$. Multiplying \eqref{eq:delta2kappa} by $\delta_1$ and taking relative norms on $F/F_3$ yields 
\begin{equation} \label{eq:relnorms}
\delta_2^2 \, \N_{F/F_3}(\delta_1\kappa - \lambda_1) = \delta_1^2 \lambda_2^2 \,\text{N}_{F/F_3}(\beta). 
\end{equation}
If $\alpha$ divides $\delta_1$, then $\alpha$ does not divide $\lambda_1$, so $v_\alpha(\N_{F/F_3}(\delta_1\kappa - \lambda_1)) = 0$. Then the left hand side of \eqref{eq:relnorms} has even $\alpha$-adic valuation, while the $\alpha$-adic valuation of the right hand side is odd, a contradiction. It follows that $v_\alpha(\delta_1) = 0$.

If $\alpha$ divides $\delta_2$, then $\alpha$ does not divide $\lambda_2$. But then the left hand side of \eqref{eq:relnorms} has $\alpha$-adic valuation at least 2, whereas the right hand side has $\alpha$-adic valuation 1, which is again a contradiction. So $v_\alpha(\delta_2) = 0$. Since $\delta_2$ divides $\alpha^{3+e_2-f}= \alpha^{2+e_2}$, this forces $\delta_2 \in O_{F_3}^\times$. 
Then $\lambda_1\delta_1^{-1} = \kappa- \lambda_2  \delta_2^{-1}\beta \in O_F \cap F_3 = O_{F_3}$. Since $\gcd(\lambda_1, \delta_1) = 1$, we must have $\delta_1 \in O_{F_3}^\times$, so~$\kappa \in O_F$. 
\end{proof}

\subsection{Orthogonality of $\Lambda_F$ for $F = \mathbb{Q}(\sqrt[3]{D^3 \pm 1}, \sqrt{-d})$} \label{sec:orth}

We now restrict to integers $m$ such that $m \pm 1$ is a perfect cube for one of the choices of sign. Stender \cite{stender75} explicitly gave the fundamental units of the corresponding family of pure cubic fields as follows.

\begin{proposition} \label{prop:stender_cubic}
    Let $D$ be a positive integer and $m = D^3 \pm 1 > 0$ cubefree. Then $\epsilon_t = \sqrt[3]{m} - D$ is a fundamental unit of $F_3 = \mathbb{Q}(\sqrt[3]{m})$ unless $(D,m) = (3,28)$, in which case $\sqrt{\epsilon_t} = (\sqrt[3]{28}^2 - 2\sqrt[3]{28}-2)/6$ is a fundamental unit of $F_3 = \mathbb{Q}(\sqrt[3]{28})$. 
\end{proposition}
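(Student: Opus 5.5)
The plan is in three steps: check that $\epsilon_t$ is a unit, show it is a fundamental unit for every $(D,m) \neq (3,28)$, and treat $(D,m)=(3,28)$ separately. Throughout write $\theta = \sqrt[3]{m}$, so $\theta^3 = m = D^3 \pm 1$.

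\textbf{Unit.} The factorization $\theta^3 - D^3 = (\theta - D)(\theta^2 + D\theta + D^2) = \pm 1$ shows that $\epsilon_t = \theta - D$ is a unit. Since $\theta \in O_{F_3}$, it is a unit of $O_{F_3}$. The field $F_3$ has signature $(1,1)$, so its unit group has rank one and a fundamental unit $\eta > 1$ exists under the real embedding. We may then write $|\epsilon_t|^{\pm 1} = \eta^k$ with $k \ge 1$, and the task is to prove $k=1$, apart from the exceptional case where $k = 2$.

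\textbf{Ruling out $k \ge 2$.} We take the unit $u = |\epsilon_t|^{-1} = \theta^2 + D\theta + D^2$ (up to sign), which is of size about $3D^2$. We compare it against a lower bound for $\eta$ in terms of the discriminant. For a complex cubic field, Artin's inequality gives $\eta^3 > |\Delta|/4 - 6$, or some comparable bound in the style of Nagell. Here $|\Delta_{F_3}|$ is of order $27m^2$ or $3m^2$ (when $m \equiv \pm1 \pmod 9$), up to the square part of $m$, and $m \approx D^3$.

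The difficulty is that this estimate is not sharp enough when the square part of $m$ is large: it only gives $\eta^3 \gg m^2/(\text{square factor})^2$. Since $u \approx 3 m^{2/3}$, we get $\log u \approx \tfrac{2}{3}\log m$, while $\log \eta \gtrsim \tfrac{2}{3}\log m - O(1)$ in the squarefree case. That suffices to force $k < 2$, but it fails in general.

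So the first thing to try instead is Nagell's classical method. Suppose $\epsilon_t = \pm \eta^2$ or $\pm\eta^{n}$ for a prime $n$. A unit of the form $x + y\theta + z\theta^2$ with $\eta > 1$ has coefficients bounded explicitly by its size, using the conjugate relations $\eta\,|\eta'|^2 = 1$. One then checks directly whether $\eta^2$ or $\eta^3$ can equal $u$. Writing $\eta = (x + y\theta + z\theta^2)/3$, with denominators allowed for the index-3 case of $O_{F_3}$, squaring gives polynomial equations in $x,y,z,D$.

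\textbf{Main obstacle and expected outcome.} The main obstacle is this elementary Diophantine analysis of $u = \eta^n$. Cubes should be excluded because the norm argument forces the coefficients to be too small to match $D^2$. For squares, comparing the $\theta$- and $\theta^2$-coefficients should produce a finite set of exceptions; I expect that only $D=3$, $m=28$ survives. That case is exactly where $28 \equiv 1 \pmod 9$ and the larger ring $O_{F_3}$ contains $(\theta^2 - 2\theta - 2)/6$.

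\textbf{The exceptional case.} For $(D,m)=(3,28)$ one verifies directly that $\bigl((\theta^2-2\theta-2)/6\bigr)^2 = \theta - 3$. This reduces to a polynomial identity modulo $\theta^3 = 28$. The same bounding argument then shows that this square root cannot itself be a further power, so it is fundamental. Alternatively, the paper may simply cite Stender's classification, which is how I would ultimately justify the exhaustive case analysis.
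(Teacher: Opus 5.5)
\textbf{Overall.} The paper does not prove this proposition; it quotes it from Stender \cite{stender75}. Your fallback of citing Stender is therefore exactly the paper's justification. Your unit check and your verification that $\bigl((\theta^2-2\theta-2)/6\bigr)^2=\theta-3$ are also correct.

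\textbf{The gap.} As a self-contained argument, your sketch does not prove the only nontrivial step: excluding $u=\eta^k$ with $k\ge 2$ when $(D,m)\ne(3,28)$. You leave the outcome as an expectation, and the Nagell-type ansatz you propose for it is set up incorrectly. Write $m=ab^2$ with $a,b$ squarefree and coprime. Then $O_{F_3}$ contains $\theta^2/b$, so units can have denominator $3b$, not just $3$. The exceptional unit $(\theta^2-2\theta-2)/6$ itself has denominator $6=3b$, so your parametrization $\eta=(x+y\theta+z\theta^2)/3$ cannot even see it. Since $b$ is unbounded along the family, comparing coefficients does not reduce to a finite check as you describe.

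\textbf{The missing idea.} Artin's bound already suffices once you use the arithmetic shape of $m$.
\begin{itemize}
\item \emph{Discriminant.} We have $|\Delta_{F_3}|=27am$ if $3\nmid D$, and $|\Delta_{F_3}|=3am$ if $3\mid D$; the latter is exactly the case $m\equiv\pm1\pmod 9$.
\item \emph{The comparison.} If $k\ge2$, then $\eta^3\le u^{3/2}<(3D^2+3/D)^{3/2}\approx 3\sqrt3\,D^3$. Compare this with $\eta^3>|\Delta_{F_3}|/4-6$ and $m\ge D^3-1$.
\item \emph{Easy cases.} The comparison disposes of $3\nmid D$ for $D\ge4$; the remaining $m\in\{2,7,9\}$ are checked directly. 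It also disposes of $3\mid D$ whenever $a\ge 13$.
\item \emph{Structure of $m$ when $3\mid D$.} The factors $D\pm1$ and $Q=D^2\mp D+1$ of $m$ are coprime. $Q$ is odd and prime to $3$. Since $4Q=(2D\mp1)^2+3$ lies strictly between consecutive squares, $Q$ is not a square. Every prime divisor of $Q$ is $\equiv1\pmod 3$, because $-3$ is a square modulo it.
\item \emph{Consequence.} The part of $a$ coming from $Q$ is a nontrivial product of primes $\equiv 1\pmod 3$, so $a\ge7$. Moreover $a<13$ forces $a=7$, with $D\pm1$ a square and $Q=7t^2$.
\item \emph{The case $a=7$.} The same comparison gives $k=1$ once $D\ge9$. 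This leaves $D\in\{3,6\}$, i.e.\ $m\in\{26,28,215,217\}$. Of these, only $m=28$ has $a=7$.
\item \emph{The exception $m=28$.} Here $\eta^3>588/4-6=141>u\approx 27.3$, which rules out $k\ge3$. Your identity shows $u$ is the square of a unit, so $k=2$.
\end{itemize}
This yields a complete elementary proof, without Nagell's method or an appeal to Stender's case analysis.
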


Let $F_3$ be a field belonging to the Stender family of \Cref{prop:stender_cubic}, and $F = F_3(\sqrt{-d})$ with $d > 0$ squarefree. Assuming certain conditions on $m$ and $d$, we construct a quartic polynomial for which the non-existence of a root modulo an appropriate factor of~$d$ guarantees the orthogonality of the log unit lattice $\Lambda_F$. 

\begin{proposition} \label{prop:orth}
Let $D$ and $m$ be as in \Cref{prop:stender_cubic}, and let $d$ be a positive squarefree integer. Assume that one of the following holds.
\begin{itemize}
\item $m = 2$ or
\item $m$ is odd, $\gcd(m,d) = 1$ and $3 \nmid d$.
\end{itemize}
Let $d_0 = d/\gcd(d,6)$ if $m = 2$ and $d_0 = d$ otherwise. If the polynomial 
\[ f(T) = T^4 + 8D^2T^3 + 18mDT^2 + 8m^2T + m^2 D^2 \]
has no integer root modulo $d_0$, then the log unit lattice $\Lambda_F$ of $F = \mathbb{Q}(\sqrt[3]{m}, \sqrt{-d})$ is orthogonal.
\end{proposition}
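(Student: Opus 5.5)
The plan is to argue by contraposition using \Cref{cor: characterization of orthogonality}. Suppose $\Lambda_F$ is not orthogonal. Then we are in case (iii) of \Cref{cor:unit-basis}: there is a Minkowski unit $u\in O_F^\times$ with $\Log_F(u)=v=(v_t+v_r)/2$, and $\Log_F(\epsilon_t)$ generates $\Lambda^t_{F/F_3}$.

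By \Cref{remark: definition of vr and vt}, $\Log_F(\N_{F/F_3}(u))=(1+\gamma)v=v_t$. Hence $\N_{F/F_3}(u)=\zeta\,\epsilon$, where $\zeta\in\mu_{F_3}=\{\pm1\}$ and $\epsilon$ is the fundamental unit of $F_3$. In the exceptional case $(D,m)=(3,28)$ of \Cref{prop:stender_cubic} the fundamental unit is $\sqrt{\epsilon_t}$; I will treat it separately or check that it is excluded. Otherwise \Cref{prop:stender_cubic} gives $\epsilon=\epsilon_t=\theta-D$ with $\theta=\sqrt[3]{m}$. Moreover, $\N_{F/F_3}(u)=u\bar u$ is positive at the real embedding of $F_3$, which fixes the sign $s=\pm1$. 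The conclusion of this first step is that $s(\theta-D)$ is a relative norm of a unit from $O_F$.

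Next I use the explicit ring of integers. By \Cref{lem:index1} (for $m$ odd) or \Cref{prop:index-m=2} (for $m=2$), we have $O_F=O_{F_3}[\beta]$, and $d_0$ divides both $\Tr_{F/F_3}(\beta)$ and $\N_{F/F_3}(\beta)$. Writing $u=x+y\beta$ with $x,y\in O_{F_3}$ gives
\[
\N_{F/F_3}(u)=x^2+xy\Tr_{F/F_3}(\beta)+y^2\N_{F/F_3}(\beta)\equiv x^2 \pmod{d_0O_{F_3}}.
\]
Therefore $s(\theta-D)$ is a square modulo $d_0 O_{F_3}$. Since $d_0$ is coprime to $3m$, and the index $[O_{F_3}:\Z[\theta]]$ divides a power of $3m$, we have $O_{F_3}/d_0O_{F_3}\cong(\Z/d_0\Z)[\theta]/(\theta^3-m)$. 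So we may take $x=a+b\theta+c\theta^2$ with $a,b,c\in\Z$. Comparing coefficients in $x^2\equiv s(\theta-D)$ gives the system modulo $d_0$:
\[
a^2+2mbc\equiv -sD,\qquad 2ab+mc^2\equiv s,\qquad b^2+2ac\equiv 0 .
\]

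The final step, which I expect to be the main obstacle, is to eliminate variables from this system and produce an integer root of $f(T)=T^4+8D^2T^3+18mDT^2+8m^2T+m^2D^2$ modulo $d_0$, contradicting the hypothesis. Because $d_0$ is squarefree, the Chinese remainder theorem lets me work one prime $p\mid d_0$ at a time in $\mathbb F_p$. A root modulo each such $p$ then lifts to a root modulo $d_0$.

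For the elimination over $\mathbb F_p$, the third equation $b^2=-2ac$ suggests the parametrization $b=\lambda c$, $a=-\lambda^2 c/2$ when $c\neq0$; the case $c=0$ forces $b=0$, and I will check it separately. Substituting into the first two equations and then dividing one by the other removes $c^2$ and the sign $s$. This should leave a quartic in $\lambda$. After a linear rescaling, say $T=$ a constant multiple of $\lambda$ involving $m$ and $D$, this quartic should become $f(T)$; in particular the constant term $m^2D^2$ hints at a substitution of the form $T=m\lambda$ up to sign.

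Two points need care here. First, the divisions by $2$ and $3$ in this elimination are harmless: for $m$ odd we have $3\nmid d$, though $2\mid d_0$ is possible and must be handled; for $m=2$, $d_0$ is coprime to $6$. Second, the degenerate case $c=0$ must also be accounted for. I will verify the algebra of the elimination symbolically.
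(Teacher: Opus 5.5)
Up to the system of three congruences, your argument is the paper's. Non-orthogonality gives a unit whose relative norm is $\pm\epsilon_t$. The basis $O_F=O_{F_3}[\beta]$ makes that norm $\equiv x^2 \pmod{d_0}$, and comparing coefficients gives exactly your congruences. The paper keeps the index $I=[O_{F_3}:\Z[\sqrt[3]{m}]]$ instead of reducing modulo $d_0$, which is equivalent. The exceptional pair $(D,m)=(3,28)$ is excluded outright, since $28$ is even and $\neq 2$.

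The gap is the last step. This is the step that actually produces $f$, and the mechanism you predict for it does not work. With $b=\lambda c$ and $a=-\lambda^2c/2$, add $D$ times the second congruence to the first, cancel $c^2$ and clear the denominator. You get
\[
g(\lambda)=\lambda^4-4D\lambda^3+8m\lambda+4Dm\equiv 0 \pmod p .
\]
This quartic has no $\lambda^2$-term, whereas $f$ has $T^2$-coefficient $18mD$. So no rescaling $T=k\lambda$ turns $g$ into a multiple of $f$. The variable in which $f$ is written is not $\lambda=b/c$ but $T=a/c=-\lambda^2/2$, which is quadratic in $\lambda$. One checks the identity $g(\lambda)\,g(-\lambda)=16\,f(-\lambda^2/2)$. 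Hence a root $\lambda$ of $g$ modulo $p$ produces the root $-\lambda^2/2$ of $f$ modulo $p$, but only for odd $p$, since your parametrization divides by $2$.

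To close the argument, you can use that identity. More directly, as in the paper:
\begin{enumerate}
\item Add $D$ times the second congruence to the first to get $a^2+Dmc^2\equiv -2b(mc+Da)$.
\item Square this and substitute $b^2\equiv-2ac$ to obtain $(a^2+Dmc^2)^2\equiv-8ac(mc+Da)^2$.
\item Divide by $c^4$. Then $T\equiv ac^{-1}$ satisfies $f(T)\equiv 0 \pmod{d_0}$, contradicting the hypothesis.
\end{enumerate}
This needs only that $c$ be invertible modulo every prime $p\mid d_0$, and your degenerate case gives exactly that. Indeed, $c\equiv 0$ forces $b\equiv 0$, and then the second congruence reads $0\equiv s=\pm1$, which is impossible.

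The paper's route never inverts $2$, so the case $2\mid d_0$ (possible when $m$ is odd) needs no separate treatment. In your prime-by-prime version, you can instead observe that $f\equiv (T+mD)^4 \pmod 2$ always has a root, so $p=2$ is harmless there.
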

\begin{proof}
Let $F_3 = \Q(\sqrt[3]{m})$. The lattices $\Lambda_{F/F_3}^t$ and $\Lambda_{F/F_3}^r$ as given in \Cref{lem:vtvrortho} both have rank~1. Let $v_t$ and $v_r$ be generators of these respective lattices.

Suppose by contradiction that $\Lambda_F$ is not orthogonal. Then by \Cref{cor: characterization of orthogonality}, there exists  $\epsilon_r\in \Log_F^{-1}(v_r)$ such that $\sqrt{\epsilon_t \epsilon_r} \in F$, where $\epsilon_t$ is as in \Cref{prop:stender_cubic} and $\epsilon_r$ is a unit of $O_F$ such that $\Log_F(\epsilon_r)$ is a generator of $\Lambda_{F/F_3}^r$. By \Cref{remark: definition of vr and vt} and \Cref{prop:stender_cubic}, 
\begin{equation} \label{eq:norm-sqrt1}
\N_{F/F_3}(\sqrt{\epsilon_t \epsilon_r}) = \pm \epsilon_t = \pm(\sqrt[3]{m}-D).
\end{equation}
We have $O_F = O_{F_3}[\beta]$ where $\beta$ is given in \Cref{lem:index1} when $m$ is odd and in \Cref{prop:index-m=2} when $m = 2$. Write $\sqrt{\epsilon_t \epsilon_r} = \theta_1 + \theta_2\beta$ with $\theta_1, \theta_2 \in O_{F_3}$. By \Cref{lem:index1} and \Cref{prop:index-m=2}, 
\begin{equation} \label{eq:norm-sqrt2}
    \N_{F/F_3}(\sqrt{\epsilon_t \epsilon_r}) \equiv \theta_1^2 \pmod{d_0}.
\end{equation}
Let $\alpha = \sqrt[3]{m}$ for brevity, and let $I = [O_{F_3} : \Z[\alpha]]$. Then $I = 1$ if $m=2$ and $I$ divides $3m$ if~$m$ is odd. Either way, we have $\gcd(I,d) = 1$. Write $\theta_1 = \frac{1}{I}(x + y\alpha + z\alpha^2)$ with $x, y, z \in \Z$. Then \eqref{eq:norm-sqrt1} and \eqref{eq:norm-sqrt2} yield
\begin{align}
    x^2 + 2myz & \equiv \mp I^2 D \pmod{d_0} , \label{eq:1} \\
    2xy + mz^2 & \equiv \pm I^2 \pmod{d_0}, \label{eq:2} \\
    2xz + y^2  & \equiv 0 \pmod{d_0} . \label{eq:3}
\end{align}
We multiply \eqref{eq:2} by $D$ and add the result to \eqref{eq:1} to obtain
\begin{equation} \label{eq:4}
    x^2 + Dmz^2 \equiv -2y(mz+Dx) \pmod{d_0}.
\end{equation}
Squaring \eqref{eq:4} and substituting \eqref{eq:3} into the result yields
\begin{equation} \label{eq:5}
     (x^2 + Dmz^2)^2 \equiv -8xz(mz+Dx)^2 \pmod{d_0}.
\end{equation}
We claim that $\gcd(z,d_0) = 1$. To that end, suppose that some prime $p$ divides $\gcd(z,d_0)$. Then $p$ divides $y$ by \eqref{eq:3}, and hence $p$ also divides $I$ by \eqref{eq:2}, contradicting $\gcd(I, d_0) = 1$. 

Put $w \equiv xz^{-1} \pmod{d_0}$. Then multiplying \eqref{eq:5} by $z^{-4} \pmod{d_0}$ and rearranging terms yields $f(w) \equiv 0 \pmod{d_0}$, where $f(T)$ is the polynomial in the statement of \Cref{prop:orth}. Hence $w$ is an integer root of $f(T)$ modulo $d_0$ which contradicts the assumption.   
\end{proof}

\subsubsection{Fixed $d$ and varying $D$}

Via Proposition \ref{prop:orth}, we can establish the existence of infinite families of pure sextic fields with orthogonal log unit lattices. We construct for every fixed squarefree $d$ coprime to 6 an infinite family of sextic fields $F \supset \mathbb{Q}(\sqrt{-d})$ whose log unit lattices are orthogonal.

\begin{proposition} \label{prop:fixd_varym} 
Let $d$ be a fixed positive squarefree integer coprime to $6$. Let $D= kd+1$, where $k\in \mathbb{Z}_{>0}$ is odd. Let $m= D^3+1$, and suppose that $m$ is cubefree. If the polynomial
\begin{equation}\label{equation: The polynomial}
f(T)=T^4+8T^3+ 36T^2+ 32T+4
\end{equation}
has no integer root modulo $d$, then the log unit lattice $\Lambda_F$ of $F= \mathbb{Q}(\sqrt[3]{m}, \sqrt{-d})$ is orthogonal.
\end{proposition}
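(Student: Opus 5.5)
The plan is to reduce directly to \Cref{prop:orth}, using its second hypothesis: $m$ odd, $\gcd(m,d)=1$, $3\nmid d$. Here $d_0=d$. Three things must be checked: the hypotheses on $D$, $m$, $d$ hold; the polynomial in \Cref{prop:orth} reduces modulo $d$ to the polynomial $f$ in \eqref{equation: The polynomial}; and the root conditions therefore coincide.

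\textbf{Step 1 (hypotheses).} Since $D=kd+1$ with $k\ge 1$, we have $D\ge 2$, so $m=D^3+1>1$, and $m$ is cubefree by assumption. Hence $D$ and $m$ are as in \Cref{prop:stender_cubic}.

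\emph{Parity.} As $\gcd(d,6)=1$, $d$ is odd, and with $k$ odd the product $kd$ is odd. So $D=kd+1$ is even, and $m=D^3+1$ is odd.

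\emph{Coprimality.} Since $D\equiv 1\pmod d$, we get $m\equiv 2\pmod d$. As $d$ is odd, $\gcd(m,d)=\gcd(2,d)=1$.

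\emph{The prime $3$.} Finally $3\nmid d$ because $\gcd(d,6)=1$. All hypotheses of the second bullet of \Cref{prop:orth} hold, with $d_0=d$.

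\textbf{Step 2 (reducing the quartic).} Substitute $D\equiv 1$ and $m\equiv 2 \pmod d$ into
\[ T^4+8D^2T^3+18mDT^2+8m^2T+m^2D^2. \]
This gives $T^4+8T^3+36T^2+32T+4 \pmod d$. Therefore an integer $w$ is a root of the quartic of \Cref{prop:orth} modulo $d$ exactly when it is a root of $f$ in \eqref{equation: The polynomial} modulo $d$.

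\textbf{Step 3 (conclusion).} If $f$ has no integer root modulo $d$, then neither does the quartic of \Cref{prop:orth}. That proposition then gives that $\Lambda_F$ is orthogonal.

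No step is a real obstacle; the argument is bookkeeping. The only point needing care is the parity argument: the hypothesis that $k$ is odd is exactly what makes $m$ odd, which is the hypothesis of \Cref{lem:index1} underlying the ring-of-integers description used in \Cref{prop:orth}.
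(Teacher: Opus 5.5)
Your proposal is correct and matches the paper's proof: reduce the quartic of \Cref{prop:orth} modulo $d$ using $D\equiv 1$ and $m\equiv 2 \pmod d$, then verify the second-bullet hypotheses. Those hypotheses are that $3\nmid d$, that $m$ is odd because $k$ and $d$ are odd, and that $\gcd(m,d)=1$ because $m\equiv 2\pmod d$ with $d$ odd. This gives the conclusion with $d_0=d$.
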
 
\begin{proof}
We have $D \equiv 1 \pmod{d}$ and hence $m \equiv 2 \pmod{d}$. Thus, modulo $d$, the polynomial $f(T)$ of \Cref{prop:orth} becomes the polynomial $f(T)$ in \eqref{equation: The polynomial}. The result now follows from \Cref{prop:orth}, subject to verifying the required assumptions on $d$ and $m$ stated there.

By assumption, $d$ is positive, squarefree and coprime to 6, so $3 \nmid d$. Since $k$ and $d$ are both odd, $D$ is even and hence $m$ is odd. Moreover, $m$ is cubefree by assumption. Finally, since $m \equiv 2 \pmod{d}$, any common factor of $d$ and $m$ divides both 2 and $d$ (which is odd), implying that $\gcd(d,m) = 1$.
\end{proof}

\begin{remark}
    Note that if $f(T)$ as given in \Cref{prop:fixd_varym} has an integer root modulo $d$, then we cannot draw any conclusion about orthogonality. In fact, \Cref{prop:fixd_varym} is not an ``if and only if" result. For example, for $d=19$ and $m = (1\cdot 19 +1)^3 + 1 = 8001$, the polynomial $f(T)$ has a root modulo $d$, but the corresponding sextic log unit lattice is nevertheless orthogonal.
\end{remark}

\begin{corollary} \label{cor:fixd_varym}
    There are infinitely many positive squarefree integers $d$ 
    such that for each such $d$ there exists an infinite family of fields $F= \mathbb{Q}(\sqrt[3]{m},  \sqrt{-d})$ with orthogonal log unit lattices. Specifically, for every positive squarefree integer $d$ coprime to $6$ that is a multiple of any of the primes belonging to the set $S = \{ 5, 11, 17, 23$, $31, 43, 47, 59, 89\}$, the infinite family of fields $F = \mathbb{Q}(\sqrt[3]{(kd+1)^3+1}, \sqrt{-d})$ satisfying the conditions of Proposition \ref{prop:fixd_varym} has orthogonal log unit lattices.   
\end{corollary}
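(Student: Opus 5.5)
The plan is to reduce everything to \Cref{prop:fixd_varym} by showing two things: that for each prime $p\in S$ the polynomial $f(T)=T^4+8T^3+36T^2+32T+4$ has no root modulo $p$, and that for each admissible $d$ there are infinitely many odd $k$ with $m=(kd+1)^3+1$ cubefree.

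\textbf{Step 1: no roots modulo $d$.} If $p\mid d$ and $f$ has no root modulo $p$, then $f$ has no root modulo $d$. Indeed, any integer root modulo $d$ would reduce to a root modulo $p$. So it suffices to check, for each of the nine primes $p\in S$, that $f(t)\not\equiv 0\pmod p$ for $t=0,\dots,p-1$. This is a finite computation, done by hand or in PARI/GP. It can be organized structurally via the substitution $T=U-2$, which removes the cubic term:
\[ f(U-2)=U^4+12U^2-48U+52. \]
Equivalently, one can factor $f$ over $\Q$ or compute its discriminant and Galois group, and then describe by Chebotarev the primes with no root. For the corollary, however, a direct verification on the listed primes suffices. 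Every $p\in S$ is coprime to $6$, which is consistent with the hypothesis on $d$. There are infinitely many squarefree $d$ coprime to $6$ divisible by some $p\in S$, for instance $d=5q$ with $q>5$ prime. This gives the ``infinitely many $d$'' assertion.

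\textbf{Step 2: infinitely many cubefree $m$.} Fix such a $d$ and set $g(k)=(kd+1)^3+1$. This is a cubic polynomial in $k$ with no repeated irreducible factor. Indeed, $x^3+1=(x+1)(x^2-x+1)$ and the two factors are coprime, so $g$ is squarefree as a polynomial. By Erd\H{o}s's theorem on cubefree values of cubic polynomials (equivalently, Hooley's results on $k$-free values), $g(k)$ is cubefree for a positive proportion of $k$. This holds provided no fixed cube $p^3$ divides $g(k)$ for all $k$ in the residue class $k$ odd. To check this, note that for each prime $p$ one can choose $k$ with $kd+1\equiv 0\pmod p$ when $p\nmid d$, and then $g(k)\equiv 1\pmod p$. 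When $p\mid d$, we have $g(k)\equiv 2\pmod p$. Restricting to odd $k$ only affects $p=2$, and there $kd+1$ is even, so $g(k)$ is odd. Hence there is no local obstruction. The main obstacle is this step: it requires quoting a cubefree-values theorem for a cubic polynomial restricted to an arithmetic progression $k=2j+1$. If preferred, one can sidestep the deep input with a weaker elementary count, bounding the number of $k\le X$ with $p^3\mid g(k)$ for small $p$ and handling large $p$ by Erd\H{o}s's argument, but citing the known result is cleanest.

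\textbf{Step 3: conclusion.} Each such $k$ yields a field $F$ to which \Cref{prop:fixd_varym} applies, so $\Lambda_F$ is orthogonal. Distinct cubefree $m$ give distinct pure cubic fields $\Q(\sqrt[3]{m})$, and since a field $F$ contains only finitely many cubic subfields, infinitely many distinct $m$ yield infinitely many distinct fields $F$. One caveat concerns the exceptional case $(D,m)=(3,28)$ of \Cref{prop:stender_cubic}: it cannot occur here, since $D=kd+1\ge 6$.
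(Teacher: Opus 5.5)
Your argument is correct and follows the paper's route: reduce to Proposition~\ref{prop:fixd_varym}, verify by a finite computation that $f$ has no root modulo each $p\in S$ (hence none modulo any multiple of $p$), and invoke a cubefree-values theorem for $(kd+1)^3+1$. Your explicit treatment of the restriction to odd $k$ and of local obstructions is more careful than the paper, which does not mention the parity condition. There are two slips, neither of which affects the main argument. First, the depressed form is $f(U-2)=U^4+12U^2-48U+36$, since the constant term is $f(-2)=36$, not $52$; with $52$ you would find a spurious root $U\equiv 2 \pmod 5$. Second, distinct cubefree $m$ need not give distinct fields, because $\Q(\sqrt[3]{ab^2})=\Q(\sqrt[3]{a^2b})$; this correspondence is at most two-to-one, so your conclusion of infinitely many distinct $F$ still holds.
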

\begin{proof}
  Using \cite[Theorem 1.1]{booker2016squarefree} (see also \cite{ricci33}) shows that there are infinitely many $k$ such that $(kd+1)^3+1$ is cubefree. 
  It is easily verified that the polynomial $f(T)$ of \Cref{prop:fixd_varym} does not have an integer root modulo any of the primes in $S$, and hence does not have an integer root modulo any multiple of an element of~$S$. 
\end{proof}

\subsubsection{Varying $d$ and $D=1$}

For $D = 1$ and $m = D^3 + 1 = 2$, we again construct an infinite family of purely sextic fields $F \supset \mathbb{Q}(\sqrt[3]{2})$ with orthogonal log unit lattices. The results in this case are analogous to \Cref{prop:orth} and \Cref{cor:fixd_varym}, except that there are no divisibility restrictions on $d$. 

\begin{proposition} \label{prop:m=2varyd}
Let $d$ be any positive squarefree integer. If the polynomial
\[f(T)=T^4+8T^3+ 36T^2+ 32T+4\]   
has no integer root modulo $d$, then the log unit lattice $\Lambda_F$ of $F= \mathbb{Q}(\sqrt[3]{2}, \sqrt{-d})$ is orthogonal.
\end{proposition}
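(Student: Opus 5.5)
This is the special case $D=1$, $m=D^3+1=2$ of \Cref{prop:orth}, so the plan is simply to check its hypotheses and then handle the mismatch between $d$ and $d_0$.

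\textbf{Specializing the polynomial.} With $D=1$ and $m=2$, the quartic $T^4+8D^2T^3+18mDT^2+8m^2T+m^2D^2$ of \Cref{prop:orth} becomes
\[ T^4+8T^3+36T^2+32T+4, \]
which is exactly $f(T)$ in the statement.

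\textbf{Checking the hypotheses.} The pair $(D,m)=(1,2)$ satisfies \Cref{prop:stender_cubic}: $m=2>0$ is cubefree and $(D,m)\neq(3,28)$. So $\epsilon_t=\sqrt[3]{2}-1$ is a fundamental unit of $\Q(\sqrt[3]{2})$. We are in the first case $m=2$ of \Cref{prop:orth}, which imposes no further condition on $d$ beyond being positive and squarefree. The relevant ring of integers is supplied by \Cref{prop:index-m=2}. Hence \Cref{prop:orth} gives: if $f$ has no integer root modulo $d_0=d/\gcd(d,6)$, then $\Lambda_F$ is orthogonal.

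\textbf{Passing from $d_0$ to $d$.} The statement assumes only that $f$ has no root modulo $d$, so we must show this implies no root modulo $d_0$. I would argue the contrapositive via the Chinese remainder theorem. Suppose $f$ has a root modulo $d_0$. Since $d$ is squarefree, $d=d_0\cdot\gcd(d,6)$ with coprime factors. It then suffices that $f$ has roots modulo $2$ and modulo $3$.
\begin{itemize}
\item Modulo $2$: $f(T)\equiv T^4$, so $T=0$ is a root.
\item Modulo $3$: $f(T)\equiv T^4+2T^3+2T+1$, and $f(1)=1+2+2+1=6\equiv 0$, so $T=1$ is a root.
\end{itemize}
Gluing these roots with a root modulo $d_0$ by CRT gives a root modulo $d$. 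Therefore "no root mod $d$" implies "no root mod $d_0$", and the conclusion follows from \Cref{prop:orth}.

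The only place anything could go wrong is this last CRT step, and it reduces to the two elementary evaluations above.
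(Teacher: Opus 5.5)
Your proposal is correct and is essentially the paper's own proof. You specialize \Cref{prop:orth} to $D=1$, $m=2$, then use the roots $0 \bmod 2$ and $1 \bmod 3$ of $f$, glued by the Chinese remainder theorem, to show that having no root modulo $d$ forces having no root modulo $d_0=d/\gcd(d,6)$; the paper states this step as an equivalence, but only the direction you prove is needed.
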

\begin{proof}
     Let $d_0 = d/\gcd(d,6)$. Since 0 and 1 are integer roots of $f(T)$ modulo 2 and 3, respectively, $f(T)$ has no integer root modulo $d_0$ if and only if it has no integer root modulo~$d$. The result now follows from \Cref{prop:orth}. 
\end{proof}  

\begin{remark}
    Note that if the polynomial $f(T)$ defined in \Cref{prop:m=2varyd} possesses an integer root modulo $d$, no conclusion can be drawn regarding orthogonality. Consequently, \cref{prop:m=2varyd} does not provide a necessary and sufficient condition. For example, setting $m = 1^2 + 1 = 2$ with $d = 71$ or $d = 83$ yields a polynomial $f(T)$ with roots modulo $d$, even though the corresponding sextic log unit lattices are orthogonal.
\end{remark}

\begin{corollary} \label{cor:m=2varyd}
There exists an infinite family of fields $F= \mathbb{Q}(\sqrt[3]{2}, \sqrt{-d})$ with orthogonal log unit lattices. Specifically, if $d$ varies over the squarefree multiples of any of the primes belonging to the set $S = \{ 5, 11, 17, 23$, $31, 43, 47, 59, 89\}$, then the infinite family of fields $F = \mathbb{Q}(\sqrt[3]{2}, \sqrt{-d} )$ has orthogonal log unit lattices. 
\end{corollary}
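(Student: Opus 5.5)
The plan is to derive this corollary directly from \Cref{prop:m=2varyd}, in the same way as \Cref{cor:fixd_varym}. By that proposition, it is enough to exhibit infinitely many positive squarefree $d$ for which $f(T)=T^4+8T^3+36T^2+32T+4$ has no integer root modulo $d$.

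The first step is to verify, by a finite computation for each $p\in S=\{5,11,17,23,31,43,47,59,89\}$, that $f(t)\not\equiv 0 \pmod p$ for every $t\in\{0,\dots,p-1\}$. This is a routine evaluation of $f$ at the $p$ residues, and the same check was already asserted in the proof of \Cref{cor:fixd_varym}. To confirm it structurally, one can note that $f$ has no root modulo $p$ exactly when $f$ has no linear factor over $\mathbb{F}_p$; this is a Frobenius condition, which fits the Chebotarev viewpoint used later.

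The second step handles multiples. If $d$ is squarefree and some $p\in S$ divides $d$, then any integer root $t$ of $f$ modulo $d$ would reduce to a root modulo $p$. So $f$ has no integer root modulo $d$. Note that, unlike \Cref{prop:fixd_varym}, no coprimality to $6$ is needed here: \Cref{prop:m=2varyd} allows every positive squarefree $d$. It already absorbs the factors $2$ and $3$ by passing to $d_0=d/\gcd(d,6)$, using that $f$ has roots modulo $2$ and $3$. Applying \Cref{prop:m=2varyd} then shows that $\Lambda_F$ is orthogonal for $F=\mathbb{Q}(\sqrt[3]{2},\sqrt{-d})$.

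The last step is infinitude and distinctness. The integers $d=p\ell$, with $\ell$ ranging over primes different from $p$, are infinitely many squarefree multiples of $p$. The fields $\mathbb{Q}(\sqrt[3]{2},\sqrt{-d})$ for distinct squarefree $d$ are pairwise non-isomorphic, because each contains a unique quadratic subfield, namely $\mathbb{Q}(\sqrt{-d})$, and these differ for distinct squarefree $d$. The only real obstacle is the finite root check in the first step, which is mechanical.
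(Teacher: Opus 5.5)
Your proposal is correct and follows the paper's argument. The paper gives no separate proof of this corollary; it relies, as in \Cref{cor:fixd_varym}, on checking that $f$ has no root modulo each $p\in S$, passing to squarefree multiples by reduction modulo $p$, and applying \Cref{prop:m=2varyd}. Your remarks on infinitude ($d=p\ell$) and on non-isomorphism fill in details the paper leaves tacit. For the non-isomorphism, the unique quadratic subfield is immediate, since two distinct quadratic subfields would generate a quartic subfield of a sextic field.
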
 

The polynomial $f(T)$ in Propositions \ref{prop:fixd_varym} and \ref{prop:m=2varyd} is irreducible over $\mathbb{Z}$. We note that the list $S$ of primes in Corollaries \ref{cor:fixd_varym} and \ref{cor:m=2varyd} was obtained via numerical verification and is by no means exhaustive.

\subsection{Density of orthogonal $\Lambda_F$ for $F = \mathbb{Q}(\sqrt[3]{2}, \sqrt{-d})$} \label{sec:density}

Let $\mathcal{F}_{\sqrt[3]{2}}$ denote the family of fields $F=\mathbb{Q}(\sqrt[3]{2}, \sqrt{-d})$ of \Cref{prop:m=2varyd}, where $d$ is a square-free positive integer. For any prime $p \ge 5$, we compute the proportion of fields $F = \mathbb{Q}(\sqrt[3]{2}, \sqrt{-p}) \in \mathcal{F}_{\sqrt[3]{2}}$ whose log unit lattice is orthogonal. This is accomplished by characterizing the primes $p$ for which the polynomial $f(T)$ of \Cref{prop:m=2varyd} has no integer root modulo $p$ via the Artin symbol in the Galois group of $f(T)$. We also derive a lower bound on the proportion of all fields in $\mathcal{F}_{\sqrt[3]{2}}$ whose log unit lattice is orthogonal.

\begin{proposition} \label{prop: cycletype}
Let $\alpha \in \mathbb{C}$ be a root of $f(T)$ as in \Cref{prop:m=2varyd}. Let $K_4 = \mathbb{Q}(\alpha)$ and $\tilde{K}_4$ the Galois closure of $K_4$. Let $p \ge 5$ be a prime. Then $f(T)$ has no integer root modulo $p$ if and only if there exists a prime ideal $\mathfrak{P}$ in $O_{\tilde{K}_4}$ above $p$ such that the Artin symbol $\left[ \frac{\tilde{K}_4/\mathbb{Q}}{\mathfrak{P}} \right]$, viewed as an element in $S_4$, is either a product of two disjoint $2$-cycles or a $4$-cycle. 
\end{proposition}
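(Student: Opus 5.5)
Since $f(T)$ is irreducible over $\Z$ (as noted after \Cref{cor:m=2varyd}), $\Gal(\tilde{K}_4/\Q)$ acts faithfully and transitively on the four roots of $f$. This identifies the Galois group with a transitive subgroup $G\le S_4$. The plan is the standard Dedekind--Frobenius correspondence. For a prime $p$ not dividing the discriminant of $f$ and not dividing $[O_{K_4}:\Z[\alpha]]$, the factorization pattern of $f \bmod p$ into irreducibles over $\mathbb{F}_p$ matches the cycle type of the Frobenius (Artin symbol) at any $\mathfrak{P}\mid p$, acting on the roots. Then $f$ has no root mod $p$ exactly when there is no linear factor, i.e.\ when the Frobenius has no fixed point. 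The fixed-point-free cycle types in $S_4$ are precisely the products of two disjoint $2$-cycles and the $4$-cycles. Since the cycle type is a conjugacy invariant, "there exists $\mathfrak{P}$" is equivalent to "for all $\mathfrak{P}$", which gives the statement.

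The key steps, in order, are as follows.
\begin{enumerate}
\item Compute $\mathrm{disc}(f)$ and determine its prime support. I expect it to be divisible only by $2$ and $3$, with perhaps a small extra factor; the hypothesis $p\ge 5$ suggests the ramified or index-divisor primes lie in $\{2,3\}$. One should check that the discriminant is indeed of the form $\pm 2^a3^b$.
\item For $p\nmid \mathrm{disc}(f)$, $f \bmod p$ is separable, so $p\nmid[O_{K_4}:\Z[\alpha]]$ and $p$ is unramified in $\tilde{K}_4$. Then Dedekind--Kummer applies, and the Frobenius $\left[\frac{\tilde{K}_4/\Q}{\mathfrak{P}}\right]$ permutes the reductions of the roots modulo $\mathfrak{P}$ via $x\mapsto x^p$. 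Its orbits correspond to the irreducible factors of $f\bmod p$, with orbit lengths equal to the factor degrees.
\item Conclude that the roots of $f$ in $\mathbb{F}_p$ correspond exactly to the fixed points of Frobenius. Hence $f$ has no root mod $p$ if and only if the cycle type is $(2,2)$ or $(4)$.
\end{enumerate}

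The main obstacle is step 1. If some prime $p\ge5$ divides $\mathrm{disc}(f)$, the unramified Frobenius argument breaks at that prime, and the Artin symbol must be interpreted as a coset of inertia. In that case I would treat the finitely many exceptional primes by direct computation. A single root mod $p$ with multiplicity still means a root exists, so one would check by hand that the statement holds at those primes. Alternatively, one can show $p\nmid\mathrm{disc}(f)$ by explicitly computing the discriminant. A convenient route is the substitution $T\mapsto T-2$ to depress the quartic, followed by the resolvent cubic. This also yields $G$, which is needed anyway for the subsequent Chebotarev density statement.
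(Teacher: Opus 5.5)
Your proposal is correct and is essentially the paper's argument. The paper computes $\mathrm{disc}(f) = -2^{16}\cdot 3^5$, which confirms your step~1, so no exceptional primes $p \ge 5$ arise. It then combines Kummer's factorization theorem with the fact that the cycle type of the Artin symbol acting on the cosets of $\Gal(\tilde{K}_4/K_4)$ records the residue degrees of the primes above $p$ in $K_4$; this coset action is the same as your Frobenius action on the roots of $f$, so the fixed-point-free types $(2,2)$ and $(4)$ correspond exactly to $f$ having no root modulo $p$.
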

\begin{proof}
The discriminant of $f(T)$ is $-2^{16}\cdot 3^5$, so $p$ is unramified in $K_4$ and does not divide the index $[O_{K_4}:\mathbb{Z}[\alpha]]$. By Kummer's factorization theorem (see \cite[Theorem 7.6, Chapter I]{janusz1973algebraic} for example), $f(T)$ has no integer root modulo $p$ if and only if all prime ideals above $p$ in $K_4$ have residue degree exceeding 1. Equivalently, $p$ is inert or splits into two distinct prime ideals of residue degree $2$ in $K_4$. 

We have $\Gal(\tilde{K}_4/\mathbb{Q}) \cong S_4$ and $\Gal(\tilde{K}_4/K_4) \cong S_3$. Every element of $\Gal(\tilde{K}_4/\mathbb{Q})$ acts on the cosets of $\Gal(\tilde{K}_4/\mathbb{Q})/\Gal(\tilde{K}_4/K_4)$ as a permutation and as such has a decomposition into disjoint cycles. By \cite[Chapter III, Prop.~2.8]{janusz1973algebraic},  the lengths of the cycles in the cycle decomposition of the Artin symbol $\left[ \frac{\tilde{K}_4/\mathbb{Q}}{\mathfrak{P}} \right]$ acting on these cosets correspond to the residue degrees of the prime ideals above $p$ in $K_4$. It follows that $p$ has the desired decomposition in $K_4$ if and only if $\left[ \frac{\tilde{K}_4/\mathbb{Q}}{\mathfrak{P}} \right]$ is a 4-cycle or a product of two disjoint 2-cycles. 
\end{proof}

\begin{corollary}\label{cor:density_prime}
    The density of primes $p$ for which the log unit lattice of $F=\mathbb{Q}(\sqrt[3]{2}, \sqrt{-p})$ is orthogonal is at least $3/8$.
\end{corollary}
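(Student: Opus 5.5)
The plan is to combine \Cref{prop:m=2varyd} (applied with $d = p$) and \Cref{prop: cycletype}, and then use the Chebotarev density theorem on $\Gal(\tilde{K}_4/\Q)\cong S_4$.

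For a prime $p \ge 5$, \Cref{prop:m=2varyd} says that $\Lambda_F$ is orthogonal for $F=\Q(\sqrt[3]{2},\sqrt{-p})$ whenever $f(T)=T^4+8T^3+36T^2+32T+4$ has no root modulo $p$. Discarding $p=2,3$ does not change any density. So it suffices to show that the set of such primes has natural (Dirichlet) density $3/8$.

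By \Cref{prop: cycletype}, $f$ has no root modulo $p$ exactly when the Frobenius conjugacy class at $p$ in $\Gal(\tilde{K}_4/\Q)$ consists of double transpositions or $4$-cycles. Two facts are needed here.
\begin{itemize}
\item The cycle-type condition does not depend on the choice of $\mathfrak{P}$ above $p$, since Artin symbols for different $\mathfrak{P}$ over $p$ are conjugate and hence have the same cycle type.
\item $\Gal(\tilde{K}_4/\Q)\cong S_4$, as asserted in the proof of \Cref{prop: cycletype}.
\end{itemize}

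The second fact is the main point that requires justification. I would verify it directly. First, $f$ is irreducible over $\Q$ (stated after \Cref{cor:m=2varyd}). Next, its discriminant $-2^{16}3^5$ is not a square, so the Galois group is not contained in $A_4$; this excludes $A_4$, $V_4$ and $C_2\times C_2$. To separate $S_4$ from $D_4$ and $C_4$, I would compute the cubic resolvent and check that it is irreducible over $\Q$, for example by the rational root test. An irreducible resolvent forces the group to be $S_4$ or $A_4$, and with the non-square discriminant this gives $S_4$. Alternatively, one can exhibit primes whose factorization pattern of $f$ mod $p$ gives a $3$-cycle and a $4$-cycle Frobenius.

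In $S_4$ there are $3$ double transpositions and $6$ four-cycles, so the union of these conjugacy classes has $9$ elements out of $24$, a proportion of $9/24 = 3/8$. By the Chebotarev density theorem, the primes whose Frobenius lies in this union have density $3/8$. Every such prime $p \ge 5$ gives an orthogonal $\Lambda_F$, so the density of primes with orthogonal log unit lattice is at least $3/8$. The bound is only a lower bound because orthogonality can also occur when $f$ does have a root modulo $p$, as the remark after \Cref{prop:m=2varyd} shows for $p=71$ and $p=83$.
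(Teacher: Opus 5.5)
Your proposal is correct and follows the paper's proof: you combine \Cref{prop:m=2varyd} (with $d=p\ge 5$) and \Cref{prop: cycletype}, then apply Chebotarev to the $9$ elements of $S_4$ of cycle type $(2,2)$ or $(4)$, giving $9/24=3/8$. Your extra check that $\Gal(\tilde{K}_4/\Q)\cong S_4$ is also correct and fills in a point the paper only asserts: the discriminant $-2^{16}3^5$ is not a square, and the cubic resolvent $y^3-36y^2+240y-704$ has no rational root.
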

\begin{proof}
   Let $S$ be the set of permutations in $S_4$ that are either 4-cycles or products of two disjoint 2-cycles. Then $S$ has cardinality $\#S = 9$, as the number of 4-cycles in $S_4$ is $3! = 6$ and the number of products of two disjoint 2-cycles in $S_4$ is $\binom{4}{2}/2 = 3$. By \Cref{prop:m=2varyd} and \Cref{prop: cycletype}, the log unit lattice of a field $F=\mathbb{Q}(\sqrt[3]{2}, \sqrt{-p})$, with $p$ prime, is orthogonal if there exists an $O_{\tilde{K}_4}$-prime ideal $\mathfrak{P}$ above $p$ such that the Artin symbol $\left[ \frac{\tilde{K}_4/\mathbb{Q}}{\mathfrak{P}} \right]$ in $S_4$ belongs to~$S$. By the Chebotarev Density Theorem (see \cite[Chapter~V, Theorem~10.4]{janusz1973algebraic}, the density of such primes is $\#S/\#S_4 = 9/24=3/8$. 
\end{proof}

The set $\mathcal{F}_{\sqrt[3]{2}}$ is in one-to-one correspondence with square-free positive integers $d$, which allows us to endow $\mathcal{F}_{\sqrt[3]{2}}$ with an ordering. We compute a lower bound on the density of fields $F \in \mathcal{F}_{\sqrt{2}}$ whose log unit lattices are orthogonal.

\begin{theorem} \label{thm:2dens}
    At least $68\%$ of  fields in $\mathcal{F}_{\sqrt[3]{2}}$ have orthogonal log unit lattices. 
\end{theorem}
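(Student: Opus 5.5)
By \Cref{prop:m=2varyd}, $\Lambda_F$ is orthogonal whenever $f(T)=T^4+8T^3+36T^2+32T+4$ has no integer root modulo $d$. So it suffices to show that the set of squarefree $d$ for which $f$ has no root mod $d$ has relative density at least $0.68$ among all squarefree positive integers. The key observation is that $f$ has no root mod $d$ exactly when $f$ has no root modulo some prime $p \mid d$, by the Chinese Remainder Theorem. Let $\mathcal{P}$ be the set of primes $p\ge 5$ for which $f$ is rootless mod $p$. Recall that $f$ has roots modulo $2$ and $3$. By \Cref{prop: cycletype} and Chebotarev, $\mathcal{P}$ has natural density $3/8$ among primes. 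The complementary event is that every prime factor of $d$ lies outside $\mathcal{P}$. We therefore need an upper bound of $0.32$ on the density of squarefree $d$ all of whose prime factors avoid $\mathcal{P}$.

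The cleanest route is to show that this complementary set has density $0$. The squarefree integers composed only of primes from a set of Dirichlet density $1-\delta$, with $\delta=3/8>0$, have counting function $\asymp x(\log x)^{-\delta}$. This follows from Wirsing's theorem, or from a Selberg--Delange argument applied to the Dirichlet series $\prod_{p\notin\mathcal{P}}(1+p^{-s})$. By comparison with $\zeta(s)^{1-\delta}$, the associated multiplicative function is supported on a set of density zero. The squarefree numbers themselves have positive density $6/\pi^2$. Hence $100\%$ of squarefree $d$ would give orthogonal lattices, which is certainly at least $68\%$.

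If one prefers an elementary, effective argument matching the stated constant, truncate at finitely many primes instead. Take a finite set $\mathcal{P}_0\subseteq\mathcal{P}$ of explicitly verified primes, for instance $S=\{5,11,17,23,31,43,47,59,89,\dots\}$. The density of squarefree $d$ divisible by none of the primes in $\mathcal{P}_0$ is $\prod_{p\in\mathcal{P}_0}\bigl(1-\tfrac{1}{p+1}\bigr)$, since a prime $p$ divides a random squarefree integer with density $1/(p+1)$ and these events are independent. Consequently the proportion of squarefree $d$ with orthogonal $\Lambda_F$ is at least
\[
1-\prod_{p\in\mathcal{P}_0}\frac{p}{p+1}.
\]
We then choose $\mathcal{P}_0$ as the rootless primes up to a computational bound large enough that this quantity exceeds $0.68$. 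Such a choice exists because $\sum_{p\in\mathcal{P}}1/p$ diverges, as $\mathcal{P}$ has positive density.

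The main obstacle is the bookkeeping of local densities in the truncated approach. Two points need care. First, one must justify the density $1/(p+1)$ and the independence across primes for squarefree integers, which is a standard sieve or Euler product computation. Second, one must certify by computer the list of rootless primes and check numerically that the product drops below $0.32$. The product decays slowly, roughly like $(\log X)^{-3/8}$, so $X$ may need to be fairly large. I would first attempt the computation with $X$ around $10^4$ to $10^6$. If that proves insufficient, I would fall back on the analytic Wirsing argument, which gives the bound without explicit computation.
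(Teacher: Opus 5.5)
Your proof is correct, and your primary route differs from the paper's and proves more.

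\textbf{What the paper does.} The paper argues exactly as in your truncated approach. It uses the Chinese Remainder Theorem to reduce to prime divisors of $d$. It then applies Brown's formula $\frac{6}{\pi^2}\prod_{p\in P}\frac{p}{p+1}$ for the density of squarefree integers avoiding a set $P$ of primes, and writes the proportion as $1-\prod_{p\in A}\frac{p}{p+1}$, where $A$ is the set of primes at which $f$ is rootless. Finally it bounds this below numerically by restricting to the rootless primes $p\le 10^{12}$, obtaining $\approx 0.6806$.

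\textbf{What you do instead.} You use Chebotarev (density $3/8$) to get $\sum_{p\in A}1/(p+1)=\infty$. Hence the truncated products tend to $0$, and the squarefree $d$ with no prime factor in $A$ form a density-zero set. So $100\%$ of the fields satisfy the sufficient condition of \Cref{prop:m=2varyd}.

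You do not need Wirsing or Selberg--Delange for this. Your own existence remark, run with an arbitrary $\varepsilon$ in place of $0.32$, already gives density $1$ from Chebotarev plus the finite-set formula. Wirsing only adds the rate $x(\log x)^{-3/8}$.

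\textbf{Comparison.} The paper's route yields an explicit, computer-certified constant. Yours yields the sharp statement with no computation, and shows that $68\%$ is far from the truth. Indeed, the paper's own expression $1-\prod_{p\in A}\frac{p}{p+1}$ already equals $1$, because $\sum_{p\in A}1/p$ diverges.

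\textbf{One correction to your plan.} The computational bound you suggest is far too small. Since $\prod_{p\in A,\,p\le X}\frac{p}{p+1}$ decays like $(\log X)^{-3/8}$, the paper needed all rootless primes up to $10^{12}$ just to reach $0.6806$. A cutoff $X\in[10^4,10^6]$ would give a lower bound of only about $0.5$ to $0.6$. This does not affect the theorem, because the existence argument suffices and needs no explicit certificate.
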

\begin{proof}
      If $d$ is squarefree, then by the Chinese Remainder Theorem, a polynomial in $\mathbb{Z}[T]$ has no root modulo $d$ if and only if it has no root modulo some prime divisor of $d$. Let~$A$ be the set of primes $p$ such that the polynomial $f(T)=T^4+8T^3+ 36T^2+ 32T+4$ of \Cref{prop:m=2varyd} has no root modulo $p$. Then by Proposition \ref{prop:m=2varyd}, for all squarefree multiples $d$ of some prime in $A$, the unit log lattice~$\Lambda_F$ of $F = \mathbb{Q}(\sqrt[3]{2}, \sqrt{-d})$ is orthogonal. Via identifying log unit lattices $\Lambda_F$  with squarefree natural numbers $d$ as above, a lower bound on the density of orthogonal unit log lattices $\Lambda_F$ with $F \in \mathcal{F}_{\sqrt[3]{2}}$ is thus given by the density of squarefree positive integers divisible by at least one prime in $A$. We compute this density by resorting to \cite[Theorem~1]{brown2021naturaldensitysetssquarefree}, which states that for disjoint sets~$P$ and~$T$ of prime numbers with~$T$ finite, the natural density of squarefree positive integers which are divisible by all the primes in $T$ and by none of the primes in $P$ is 
     $$\frac{6}{\pi^2}\prod_{p\in T}\frac{1}{p+1}\prod_{p\in P}\frac{p}{p+1}.$$
     
     In our application of this result, $T$ will be the empty set. Let $D$ be the set of squarefree natural numbers and $D_P \subset D$ the set of squarefree integers divisible by no prime in $A$. For any $n\in\mathbb{N}$ and any subset $X\subseteq \mathbb{N}$, define $X_n=X\cap\{1,\dots,n\}$. Then
     \[ \lim_{n\rightarrow\infty}\frac{\#D_n}{n}=\frac{6}{\pi^2}, \qquad \lim_{n\rightarrow\infty}\frac{\#D_{P,n}}{n}=\frac{6}{\pi^2}\prod_{p\in A}\frac{p}{p+1}. \]
     Let $D_A$ be set of squarefree natural numbers divisible by at least one prime in $A$. We wish to find the density of $D_A$ in $D$, which we can write as
     \[
         \lim_{n\rightarrow\infty}\frac{\#D_{A,n}}{\#D_n}=\lim_{n\rightarrow\infty}\frac{n}{\#D_n}\cdot\lim_{n\rightarrow\infty}\frac{\#D_{A,n}}{n}\\
         =\frac{\pi^2}{6}\lim_{n\rightarrow\infty}\frac{\#D_{A,n}}{n}.
     \]
     Since $D = D_A \sqcup D_p$, we have 
    \[
         \lim_{n\rightarrow\infty}\frac{\#D_{A,n}}{n} = \lim_{n\rightarrow\infty}\frac{\#D_n}{n} -\lim_{n\rightarrow\infty}\frac{\#D_{P,n}}{n} = \frac{6}{\pi^2} \left ( 1 - \prod_{p\in A}\frac{p}{p+1} \right).
      \]   
    Altogether we find that 
     $$\lim_{n\rightarrow\infty}\frac{\#D_{A,n}}{\#D_n}=1-\prod_{p\in A}\frac{p}{p+1}.$$
      The lower bound on this density is obtained by replacing $A$ in the above product by the set $A_0$ of all primes $p$ with  $p \le 10^{12}$ such that $f(T)$ as given in Proposition \ref{prop:m=2varyd} has no roots modulo $p$. Numerical computation shows that $1 - \prod_{p\in A_0}\frac{p}{p+1} \approx 0.6806$.
\end{proof}

\section{Conclusion and future research}\label{sec:conclusion}
There is a considerable amount of prior literature on the geometry and shapes of the lattices arising from number rings. In comparison, research into log unit lattices lags behind, offering numerous further promising avenues for inquiry. Our work herein leaves open a number questions for future investigation. A proof of \Cref{conj:remaining Galois groups} would complete the unconditional classification of rank $2$ unit shapes.
 Removing the dependence on the Weak Schanuel Conjecture (\Cref{conj: algebraic independence of logs}) and the Four Exponentials Problem Conjecture (\Cref{conj: four exponentials problem}) in the respective proofs of  Theorems \ref{theorem: classification or rank 2} and \ref{theorem: Completeness of the shape} would establish these results unconditionally. Another important next step would be to ascertain whether or not an infinite family  of well-rounded log unit lattices exists and, if so, whether and how they can be explicitly constructed. The possibility of extending the result of \Cref{theorem: Completeness of the shape} to other Galois groups listed in \Cref{prop:Galois groups rank 2}, in order to determine whether or not the unit shape determines the fields in the corresponding families uniquely, clearly warrants further investigation. It would also be interesting to refine the result of part 2 of \Cref{theorem: classification or rank 2} by finding an upper bound on the number of fields with a fixed index 2 subfield whose unit shapes lie on the lower arc of $\mathcal{S}_2$. Finally, generalizing our methodology for investigating the geometry and shapes of log unit lattices of rank $2$ to higher unit rank represents a compelling direction for future research. Numerous questions on the density and equidistribution of unit shapes within $\mathcal{S}_m$ remain open even for the case $m=2$.

\bibliographystyle{alpha}
\bibliography{refs}
 
\end{document}